\newtheorem{thm}{Theorem}[section]
\newtheorem{cor}[thm]{Corollary}
\newtheorem{lem}[thm]{Lemma}
\newtheorem{prop}[thm]{Proposition}
\theoremstyle{definition}
\newtheoremstyle{boldremark}
  {10pt}  
  {10pt}   
  {}      
  {}       
  {\bfseries} 
  {.}      
  { }     
  {}  
\theoremstyle{boldremark}
\newtheorem{Remark}[thm]{\textbf{Remark}}
\numberwithin{equation}{section}
\newenvironment{mathclass}
  {Mathematics Subject Classification (2010):}
\newcommand{\R}{\mathbb{R}}
\newcommand{\C}{\mathbb{C}}
\newcommand{\N}{\mathbb{N}}
\renewcommand{\keywords}[1]{%
  \par\noindent
  Keywords: #1
  \par
}
\def\R {\mathbb{R}}
\newcommand{\be}{\begin{equation}}
\newcommand{\ee}{\end{equation}}
\newcommand{\bea}{\begin{eqnarray}}
\newcommand{\eea}{\end{eqnarray}}
\newcommand{\Bea}{\begin{eqnarray*}}
\newcommand{\Eea}{\end{eqnarray*}}
\newcommand{\bt}{\begin{Theorem}}
\newcommand{\et}{\end{Theorem}}
\newcommand{\bpr}{\begin{Proposition}}
\newcommand{\epr}{\end{Proposition}}
\newcommand{\bl}{\begin{Lemma}}
\newcommand{\el}{\end{Lemma}}
\newcommand{\bi}{\begin{itemize}}
\newcommand{\ei}{\end{itemize}}
\newtheorem{Definition}{Definition}[section]
\newtheorem{Theorem}[Definition]{Theorem}
\newtheorem{Lemma}[Definition]{Lemma}
\newtheorem{Proposition}[Definition]{Proposition}
\title
{Fractional nonlinear Schr\"odinger and Hartree equations  in modulation spaces}
\author{Divyang G. Bhimani}
\address{Divyang G. Bhimani\\Department of Mathematics\\Indian Institute of Science Education and Research\\ Pune 411008\\India}
\email{divyang.bhimani@iiserpune.ac.in}
\author{Diksha Dhingra}
\address{Diksha Dhingra \\Department of Mathematics\\ Indian Institute of Technology\\ Indore, 452020\\ India}
\email{dikshadd1996@gmail.com}
\author{Vijay Kumar Sohani}
\address{Vijay Kumar Sohani\\ Department of Mathematics\\ Indian Institute of Technology\\ Indore 452020\\ India}
\email{vsohani@iiti.ac.in}
\begin{document}
\date{}
\maketitle{}
\begin{center}
DIVYANG G. BHIMANI, DIKSHA DHINGRA, VIJAY KUMAR SOHANI
\end{center}
\begin{abstract}
We establish global well-posedness
for the mass subcritical  nonlinear fractional Schr\"odinger equation 
    $$iu_t - (-\Delta)^\frac{\beta}{2} u+F(u)=0$$
having radial initial data in modulation spaces  $M^{p,\frac{p}{p-1}}(\mathbb R^n)$ for $n \geq 2, p>2$ and $p$ sufficiently close to $2.$  The nonlinearity $F(u)$ is  either of power-type  $F(u)=\pm (|u|^{\alpha}u)\; (0<\alpha<2\beta / n)$ or Hartree-type $(|x|^{-\nu} \ast |u|^{2})u \; (0<\nu<\min\{\beta,n\}).$ Our order of dispersion $\beta$ lies in $(2n/ (2n-1), 2).$
\end{abstract}
\footnotetext{\begin{mathclass} Primary 35Q55, 35Q60, 35R11, 42B37; Secondary 35A01.
\end{mathclass}
\keywords{Bourgain's high-low frequency decomposition method, Fractional nonlinear Schr\"odinger equation, Hartree equation,  Global well-posedness, modulation spaces}
}
\section{Introduction}
We study Cauchy problem for the fractional nonlinear Schr\"odinger equation (FNLS), namely,
\begin{equation}\label{FNLS}
\begin{cases}
    iu_t(x,t) - (-\Delta)^\frac{\beta}{2} u(x,t)+F(u)=0\\ u(x,0)=u_0(x)
\end{cases}(x , t ) \in \mathbb R^n \times \mathbb R 
\end{equation}where $u(x,t) \in \mathbb C,$ and $(-\Delta)^{\beta/2}$ denotes  the fractional Laplacian with $ \beta>0$. Here,  the nonlinearity is either of the power type
\begin{equation*}\label{Powertype}
    F(u)=\pm (|u|^{\alpha}u) \quad   (0<\alpha<\frac{2\beta}{n})  
\end{equation*}
or Hartree type
\begin{equation*}\label{Hartree}
    F(u)=(|\cdot|^{-\nu} \ast |u|^{2})u \quad (0<\nu< \min \{\beta,n\}),
\end{equation*}
where $\ast$ denotes the convolution in $\R^{n}$.
The model \eqref{FNLS} was introduced in the
theory of the fractional quantum mechanics by Laskin through an expansion of the Feynman path integral, transitioning from Brownian-like to L\'evy-like quantum mechanical paths \cite{Laskin1,Laskin2}. Equation \eqref{FNLS} with Hartree type nonlinearity, also known as Boson star equation, arises in the mean-field limit of large system of bosons, see \cite{Enno}. This is also referred as  Schr\"odindger-Newton equation or Choquard equation in another physical context \cite{Lieb}.

The goal of this article is to establish local well-posedness (LWP) and global well-posedness (GWP) for \eqref{FNLS} having initial  data in modulation spaces.
In the early 1980s  Feichtinger  \cite{Feih83} introduced a  class of Banach spaces,  the so called modulation spaces. We briefly  recall them here, see \cite{KassoBook, wang2011harmonic} for a thorough introduction. 
Let $\rho: \mathbb R^n \to [0,1]$  be  a smooth function satisfying   $\rho(\xi)= 1 \  \text{if} \ \ |\xi|_{\infty}\footnote{Define $|\xi|_{\infty}=\max\{ | \xi_i | : \xi= (\xi_1,..., \xi_n)\}.$}\leq \frac{1}{2} $ and $\rho(\xi)=
0 \  \text{if} \ \ |\xi|_{\infty}\geq  1$. Let  $\rho_k$ be a translation of $\rho,$ that is,
$ \rho_k(\xi)= \rho(\xi -k) \ (k \in \mathbb Z^n).$
Denote 
$$\sigma_{k}(\xi)= \frac{\rho_{k}(\xi)}{\sum_{l\in\mathbb Z^{n}}\rho_{l}(\xi)}\quad (k \in \mathbb Z^n).$$
The frequency-uniform decomposition operators can be  defined by 
$$\square_k = \mathcal{F}^{-1} \sigma_k \mathcal{F} \quad (k \in \mathbb Z^n)$$ 
where $\mathcal{F}$ and $\mathcal{F}^{-1}$ denote the Fourier and inverse Fourier transform respectively. The weighted modulation spaces  $M^{p,q}_s \ (1 \leq p,q \leq \infty, s \in \R)$ is defined as follows:
\begin{equation*}
    M^{p,q}_s= M^{p,q}_s(\R^n)= \left\{ f \in \mathcal{S}'(\R^n): \left|\left|f\right|\right|_{M^{p,q}_s}=  \left\| \|\square_kf\|_{L^p_x} (1+|k|)^{s} \right\|_{\ell^q_k}< \infty  \right\} . 
\end{equation*}
For $s=0,$ we write $M^{p,q}_0= M^{p,q}.$ For $p=q=2,$ modulation spaces coincide with Sobolev spaces, i.e. $M^{2,2}_s= H^s \ (s \in \R). $  For $p\in [1, \infty]$,  we denote $p'$ the H\"older conjugate, i.e.  $\frac{1}{p}+\frac{1}{p'}=1.$ Denote $X_{rad}$ the set of all radial functions in $X$.  We are now ready to state our main results in the next two subsections.

\subsection{Fractional nonlinear Schr\"odinger equations}
We consider \eqref{FNLS} with power type nonlinarity:
\begin{equation}\label{FNLSP}
\begin{cases}
    iu_t(x,t) - (-\Delta)^\frac{\beta}{2} u(x,t) \pm (|u|^{\alpha}u)(x,t)=0\\ u(x,0)=u_0(x)
\end{cases}(x , t ) \in \mathbb R^n \times \mathbb R. 
\end{equation}
\begin{thm}[Local well-posedness]\label{lwp} Let  $0 < \alpha < \frac{2\beta}{n}$ and $ \frac{2n}{2n-1} < \beta < 2$  for $n \geq 2.$  
  Assume $u_{0}\in L^2_{rad}+M^{\alpha+2,(\alpha+2)'}_{rad}.$ Then there exists $T^*=T^*(\|u_{0}\|_{L^2+M^{\alpha+2,(\alpha+2)'}},n,\alpha,\beta)>0$ and a unique maximal  solution $u$ of  \eqref{FNLSP} such that 
    \begin{equation*}
    u\in \big(C([0,T^*),L^2_{rad})\cap L^{\frac{2\beta(\alpha+2)}{n\alpha}}([0,T^*),L^{\alpha+2}) \big)+C([0,T^*),M^{\alpha+2,(\alpha+2)'}_{rad}).
     \end{equation*}
      \end{thm}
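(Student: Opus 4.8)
The plan is to solve the Duhamel formulation
\[
u(t) = U(t)u_0 + i\int_0^t U(t-s)\,F(u(s))\,ds, \qquad U(t) = e^{-it(-\Delta)^{\beta/2}},
\]
by a contraction argument that exploits the sum structure of the data. Writing $p = \alpha+2$ and splitting $u_0 = \phi + \psi$ with $\phi \in L^2_{rad}$ and $\psi \in M^{p,p'}_{rad}$, I would look for a solution of the form $u = v + w$, where $w(t) := U(t)\psi$ is the \emph{free} evolution of the modulation part and $v$ absorbs $U(t)\phi$ together with the \emph{entire} nonlinear Duhamel term. This matches the claimed solution class exactly: $w$ lives in $C([0,T^*),M^{p,p'}_{rad})$ (the second summand, carrying no Strichartz integrability), while $v$ is sought in the Strichartz space $Y_T := C([0,T],L^2_{rad}) \cap L^{\gamma}([0,T],L^{\alpha+2})$ with $\gamma = \tfrac{2\beta(\alpha+2)}{n\alpha}$.

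For the linear pieces I would first record that the fractional propagator $U(t)$ is bounded on $M^{p,p'}$, uniformly on bounded time intervals, so that $w \in C([0,T],M^{p,p'}_{rad})$ with $\sup_{[0,T]}\|w(t)\|_{M^{p,p'}} \lesssim \|\psi\|_{M^{p,p'}}$; combined with the embedding $M^{p,p'} \hookrightarrow L^{p}$ (valid precisely because $p' = \min\{p,p'\}$ for $p>2$), this yields $w \in C([0,T],L^{\alpha+2})$. For the $\phi$-part I would invoke the radial Strichartz estimates for $U(t)$ in the range $\tfrac{2n}{2n-1} < \beta < 2$; a direct computation shows that the pair $(\gamma,\alpha+2)$ is admissible, since $\tfrac{\beta}{\gamma} = n\bigl(\tfrac12 - \tfrac{1}{\alpha+2}\bigr) = \tfrac{n\alpha}{2(\alpha+2)}$. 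The homogeneous estimate places $U(t)\phi$ in $Y_T$, and the dual inhomogeneous estimate controls the Duhamel term by $\|F(u)\|_{L^{\gamma'}([0,T],L^{(\alpha+2)'})}$.

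It then remains to estimate the nonlinearity. Using $|F(v+w)| \lesssim (|v|+|w|)^{\alpha+1}$ together with the identity $(\alpha+1)(\alpha+2)' = \alpha+2$, Hölder in time gives
\[
\bigl\|F(v+w)\bigr\|_{L^{\gamma'}_t L^{(\alpha+2)'}_x} \;\lesssim\; T^{\theta}\bigl(\|v\|_{Y_T} + \sup_{[0,T]}\|w(t)\|_{L^{\alpha+2}}\bigr)^{\alpha+1},
\]
where $\theta>0$ arises from the strict inequality $(\alpha+1)\gamma' < \gamma$, itself equivalent to the mass-subcriticality $\alpha < 2\beta/n$. The same computation applied to $F(v_1+w)-F(v_2+w)$ produces the Lipschitz bound needed for contraction. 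Choosing $T$ small depending only on $\|\phi\|_{L^2}+\|\psi\|_{M^{p,p'}}$, the map $v \mapsto U(t)\phi + i\int_0^t U(t-s)F(v+w)\,ds$ preserves a ball in $Y_T$ and is a contraction there; its fixed point $v$, together with $w$, gives $u=v+w$, and a standard blow-up alternative furnishes the maximal time $T^*$ and uniqueness in the stated class.

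The main obstacle I anticipate is the coupling of the two regimes in the nonlinear estimate: the component $w$ need not belong to $L^2$ and is only controlled in the modulation norm, so every cross term in the expansion of $(|v|+|w|)^{\alpha+1}$ must be absorbed into the single dual-Strichartz norm $L^{\gamma'}_tL^{(\alpha+2)'}_x$. The point that makes this possible is the embedding $M^{\alpha+2,(\alpha+2)'}\hookrightarrow L^{\alpha+2}$, which lets the frozen modulation part $w$ enter all the Hölder products at the \emph{same} spatial integrability $L^{\alpha+2}$ as $v$; the subcritical time gain $T^{\theta}$ then closes the estimate. A secondary delicate point is the sharp radial Strichartz inequality for the fractional group in the narrow window $\beta\in(2n/(2n-1),2)$, where radial symmetry is indispensable, since the non-radial estimate in this range suffers a derivative loss.
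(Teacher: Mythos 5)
Your proposal is correct and uses exactly the same toolkit as the paper --- the radial Strichartz estimates of Proposition \ref{fst}\eqref{fst2} for the admissible pair $\big(\tfrac{2\beta(\alpha+2)}{n\alpha},\alpha+2\big)$, the boundedness of $U_\beta(t)$ on modulation spaces (Proposition \ref{uf}), the embedding $M^{\alpha+2,(\alpha+2)'}\hookrightarrow L^{\alpha+2}$ (Lemma \ref{srp}\eqref{embedding}), and H\"older in time with the subcritical gain $T^{\omega}$, $\omega=1-\tfrac{n\alpha}{2\beta}$ --- but the fixed-point architecture differs. The paper contracts on the \emph{full} unknown $u$ in the sum space $X(T)=X_1(T)+X_2(T)$ equipped with the infimum norm: the linear term is estimated over an arbitrary decomposition $u_0=v_0+w_0$ and the infimum is taken, while the Duhamel term is dumped into $X_1(T)$ via Lemma \ref{lemlwp} and Remark \ref{lemlwp2}. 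You instead fix one decomposition $u_0=\phi+\psi$, freeze $w=U_\beta(t)\psi$, and contract only in the Strichartz component $v$; this is precisely the architecture the paper itself adopts later, in Proposition \ref{w0exist}, for the perturbed equation driving the global iteration, so your scheme dovetails naturally with the high-low method and avoids the slightly awkward infimum norm. What the paper's formulation buys in exchange is a cleaner route to uniqueness in the stated class: contraction in $X(T)$ identifies any two solutions in the ball irrespective of how they decompose, whereas your contraction a priori yields uniqueness only among solutions of the frozen form $U_\beta(t)\psi+v$ with $v\in Y(T)$. This gap in your argument is real but small and closable with your own estimates: any solution $u$ in the stated class satisfies $\|u\|_{Y(T)}\lesssim\|u\|_{X(T)}<\infty$ (Remark \ref{lemlwp2}), hence by Lemma \ref{lemlwp} its Duhamel term lies in $X_1(T)$, so $v:=u-U_\beta(t)\psi=U_\beta(t)\phi+\mathcal{D}(u)$ automatically belongs to $Y(T)$ and solves your fixed-point equation, after which your Lipschitz bound applies. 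One further point you gloss over: the inhomogeneous radial Strichartz estimate of Proposition \ref{fst}\eqref{fst2} requires the forcing to be radial, so you should note that radiality is preserved by your iteration map (as $U_\beta(t)$ commutes with rotations and the nonlinearity acts pointwise), which also justifies restricting the contraction to the closed subspace of radial functions.
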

       
The local solution established in Theorem \ref{lwp} can be extended to a global one under certain restriction on exponent $p$ \footnote{Refer to  Remark \ref{Whypso} for a comment on the restriction of $p$.}.  
To this end, we denote
\begin{equation}\label{pmax}p_{\max} := 
\begin{cases}
 2+\frac{2}{\alpha+1}-\frac{n\alpha}{\beta(\alpha+1)}\quad &\text{if} \  \alpha-\frac{n\alpha^2}{2\beta(\alpha+2)}-1+\frac{n\alpha}{2\beta}>0 \\ \alpha+2 \quad  & \text{otherwise}.  
\end{cases}
\end{equation}
\begin{thm}[global well-posedness]\label{gwp}  Let $0 < \alpha < \frac{2\beta}{n}$ and $ \frac{2n}{2n-1} < \beta < 2$  for $n \geq 2.$ Assume that  $u_{0}\in M^{p,p'}_{rad}$ for $p\in (2,p_{max}).$ Then the local solution constructed in Theorem \ref{lwp} of \eqref{FNLSP} extends globally and lies in
\begin{equation*}
    \big(C(\R,L^2_{rad}) \cap L^{\frac{2\beta(\alpha+2)}{n\alpha}}_{loc}(\R,L^{\alpha+2})\big)+ C(\R,M^{\alpha+2,(\alpha+2)'}_{rad}).
\end{equation*}
\end{thm}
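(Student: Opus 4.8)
The plan is to derive Theorem \ref{gwp} from the local theory of Theorem \ref{lwp} by producing, on every interval $[0,T]$, an a priori bound for the norm of the solution in the splitting space $L^2_{rad}+M^{\alpha+2,(\alpha+2)'}_{rad}$ that controls the local existence time; since that time depends only on this norm, such a bound prevents the solution from leaving the space in finite time and hence yields global existence and the asserted regularity. The engine for the a priori bound is Bourgain's high--low frequency decomposition. Fix a large threshold $N$ and split $u_0=\phi_0+\psi_0$ with $\phi_0=\sum_{|k|\le N}\square_k u_0$ and $\psi_0=\sum_{|k|>N}\square_k u_0$. Radiality is essential precisely here: using the radial refinement of the modulation-space embeddings, the low part lands in $L^2$ with a polynomial loss, $\|\phi_0\|_{L^2}\lesssim N^{\theta}\|u_0\|_{M^{p,p'}}$ for some $\theta=\theta(n,p)>0$, while the complementary tail is small in the modulation norm, $\|\psi_0\|_{M^{\alpha+2,(\alpha+2)'}}\lesssim N^{-\delta}\|u_0\|_{M^{p,p'}}$ with $\delta>0$. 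The low$\to L^2$ step fails for general data, since a single unit frequency box of $M^{p,p'}$ with $p>2$ need not embed into $L^2$; it is the extra decay of radial functions that repairs this.

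Next I would evolve the two pieces separately. The datum $\phi_0\in L^2_{rad}$ is mass--subcritical ($\alpha<\tfrac{2\beta}{n}$), so the $L^2$ theory together with conservation of mass (the sign $\pm$ is irrelevant, as we never invoke the energy) produces a global solution $\phi$ of \eqref{FNLSP} with $\sup_t\|\phi(t)\|_{L^2}=\|\phi_0\|_{L^2}\lesssim N^{\theta}$ and with controlled Strichartz norm $\|\phi\|_{L^{r}_{loc}L^{\alpha+2}}$, $r=\tfrac{2\beta(\alpha+2)}{n\alpha}$, on bounded intervals. Writing $u=\phi+\psi$, the remainder solves $i\psi_t-(-\Delta)^{\beta/2}\psi+\big(F(\phi+\psi)-F(\phi)\big)=0$ with $\psi(0)=\psi_0$. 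I then set $\psi=\psi_\ell+\eta$, where $\psi_\ell(t)=e^{-it(-\Delta)^{\beta/2}}\psi_0$ is the free evolution of the high-frequency datum and $\eta$ is the Duhamel correction with $\eta(0)=0$. Boundedness of the fractional propagator on modulation spaces keeps $\psi_\ell(t)$ in $M^{\alpha+2,(\alpha+2)'}_{rad}$ with at most polynomial growth in $t$, so it stays $N^{-\delta}$--small on bounded intervals.

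The core of the argument is a smoothing estimate showing that, unlike $\psi_\ell$, the correction $\eta$ actually lies in $L^2$ with a quantitatively small norm. Expanding $F(\phi+\psi)-F(\phi)$, every term carries at least one factor of $\psi=\psi_\ell+\eta$; inserting this into the Duhamel integral and estimating with the radial fractional Strichartz inequalities---whose availability in the range $\tfrac{2n}{2n-1}<\beta<2$, $n\ge 2$, is exactly what motivates the radial and dispersion hypotheses---gains integrability and yields, on a subinterval of controlled length, a bound of the shape $\|\eta\|_{L^\infty_tL^2}\lesssim N^{\theta'}\,\|\psi_\ell\|_{M^{\alpha+2,(\alpha+2)'}}+(\text{higher order})\lesssim N^{\theta'-\delta}$. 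A contraction then closes for $\eta$ in an $L^2$-based Strichartz space once the net power of $N$ is negative, and this is precisely the inequality encoded by the restriction $p<p_{\max}$ in \eqref{pmax}; the two cases defining $p_{\max}$ record which of the competing exponents is binding.

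Finally I would iterate this local-in-time gain across $[0,T]$: partitioning into $\mathcal O(T)$ subintervals on each of which $\|\eta\|_{L^2}$ grows by at most $N^{-(\delta-\theta')}$, the total growth is $\lesssim T\,N^{-(\delta-\theta')}$, made $\le 1$ by choosing $N\sim T^{c}$ for suitable $c>0$. Hence $\|u(t)\|_{L^2+M^{\alpha+2,(\alpha+2)'}}\lesssim\|\phi(t)\|_{L^2}+\|\eta(t)\|_{L^2}+\|\psi_\ell(t)\|_{M^{\alpha+2,(\alpha+2)'}}\lesssim N^{\theta}$ remains finite on $[0,T]$ for every $T$, so by Theorem \ref{lwp} and uniqueness the maximal solution extends to all of $\R$ and, by construction, decomposes as $(\phi+\eta)\in C(\R,L^2_{rad})\cap L^{r}_{loc}(\R,L^{\alpha+2})$ plus $\psi_\ell\in C(\R,M^{\alpha+2,(\alpha+2)'}_{rad})$. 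The main obstacle is the smoothing step: one must show the nonlinear interaction of the rough high-frequency part with itself and with the smooth part produces an $L^2$ gain strong enough to beat the loss $N^{\theta}$ from placing the low frequencies in $L^2$, uniformly over the iteration. It is exactly the bookkeeping of these two powers of $N$, through the radial Strichartz estimates valid for $\beta\in(\tfrac{2n}{2n-1},2)$, that forces the threshold $p_{\max}$.
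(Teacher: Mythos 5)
Your overall scheme---global $L^2_{rad}$ evolution of the smooth part via mass conservation and radial fractional Strichartz estimates, free evolution of the rough part in $M^{\alpha+2,(\alpha+2)'}$, an $L^2$-smoothing estimate for the Duhamel correction, and an iteration whose power counting in $N$ produces the threshold $p_{\max}$---is exactly the paper's. But your very first step fails, and it is precisely the step the paper is careful to avoid. You split $u_0\in M^{p,p'}_{rad}$ by a sharp frequency cutoff, $\phi_0=\sum_{|k|\leq N}\square_k u_0$, and claim $\|\phi_0\|_{L^2}\lesssim N^{\theta}\|u_0\|_{M^{p,p'}}$ thanks to ``extra decay of radial functions.'' No such estimate holds, radial or not: a frequency projection does not improve spatial integrability, and for $p>2$ membership in $M^{p,p'}$ carries no spatial decay. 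Concretely, take $h\in C_c^\infty(\R)$ and $u_R(x)=h(|x|-R)$, a radial annular bump at radius $R\geq 1$. All derivatives satisfy $\|\partial^\beta u_R\|_{L^p}\lesssim_\beta R^{(n-1)/p}$, so $\|\square_k u_R\|_{L^p}\lesssim_M (1+|k|)^{-M}R^{(n-1)/p}$ and hence $\|u_R\|_{M^{p,p'}}\lesssim R^{(n-1)/p}$ uniformly in $R$; moreover the frequency content of $u_R$ is concentrated in $|\xi|\lesssim 1$, so for any fixed $N\geq 1$ one has $\|\phi_0\|_{L^2}=\|P_{\leq N}u_R\|_{L^2}\gtrsim \|u_R\|_{L^2}\approx R^{(n-1)/2}$. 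Since $p>2$ and $n\geq 2$, letting $R\to\infty$ defeats any bound of the form $\|P_{\leq N}u_0\|_{L^2}\lesssim N^{\theta}\|u_0\|_{M^{p,p'}}$. Your low-frequency part therefore need not lie in $L^2$ at all, and the iteration never gets started.

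The paper circumvents this by splitting according to size rather than frequency support: Lemma \ref{ipt}, a consequence of Feichtinger's interpolation theorem for modulation spaces, produces for each $N>1$ a decomposition $u_0=\phi_0+\psi_0$ with $\|\phi_0\|_{L^2}\lesssim N^{\gamma}\|u_0\|_{M^{p,p'}}$ and $\|\psi_0\|_{M^{\alpha+2,(\alpha+2)'}}\lesssim N^{-1}\|u_0\|_{M^{p,p'}}$, where $\gamma$ is given by \eqref{betap}; this is where the exponent entering $p_{\max}$ in \eqref{pmax} actually comes from. Radiality plays no role in the decomposition (Lemma \ref{ipt} holds for general data, and the splitting merely preserves radial symmetry); it is needed only so that the loss-free Strichartz estimates of Proposition \ref{fst}\eqref{fst2}---which fail for non-radial data by the Knapp example when $1<\beta<2$---are available for the $L^2$ theory (Theorem \ref{GWPL2Rad}) and for the perturbed local theory (Proposition \ref{w0exist}, Corollary \ref{winfty2}). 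With Lemma \ref{ipt} in place of your cutoff, the remainder of your argument (per-step $L^2$ gain $\lesssim T^{\kappa}N^{-1}$ for the correction, step length $T\approx N^{-\gamma\alpha/\omega}$, summing over $K$ steps and demanding a positive net power of $N$, which is the condition $p<p_{\max}$) coincides with the paper's proof.
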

The global well-posedness  for  \eqref{FNLS}  has been studied by many authors in Sobolev spaces, see Subsection \ref{pw} below. Modulation spaces have played a crucial role in the long-running  investigation of NLS \eqref{FNLS} ($\beta=2$) near Sobolev scaling criticality for the past two decades. We refer to, among others,   survey article by Ruzansky-Sugimoto-Wang
\cite{RuzSurvey} and  \cite{wang2011harmonic, Ruz4NLS, oh2018global, Kasso2009, KassoBook, Wang2006,Bhimani2016, BhimaniJFA2, BhimaniNorm, BhimaniHartree-Fock, Wang25,bhimani2023mixed}. Theorem \ref{gwp} is the first global well-posedness result for \eqref{FNLS} $(\beta \neq 2)$ for the large data in modulation spaces. It is  known that    $M^{p,p'} \neq H^{s}=M^{2,2}_s$ for $p \neq 2, s\in \mathbb R.$  It is interesting to note that Theorem \ref{gwp} is  applicable to  initial data of infinite $L^2-$norm, and also to the  data from $L^p_s-$Sobolev spaces  in view of the following sharp embedding when $p>2$ : 
\begin{equation*}\label{se}
  L^p_s\hookrightarrow  M^{p,p'} \hookrightarrow  L^p  \quad \text{for}\quad s > n\left(1-\frac{2}{p}\right) 
\end{equation*}
and 
\begin{equation}\label{HsMpq}
  H^s \hookrightarrow M^{p, p'} \quad \text{for}\quad   s> n\left(\frac{1}{2}-\frac{1}{p}\right).
 \end{equation}
\subsection{Fractional Hartree equations}
We now consider \eqref{FNLS} with  Hartree-type nonlinearity:
\begin{equation}\label{FNLSH}
\begin{cases}
    iu_t(x,t) - (-\Delta)^\frac{\beta}{2} u(x,t) \pm (|\cdot|^{-\nu} \ast |u|^{2})u=0\\ u(x,0)=u_0(x)
\end{cases}(x , t ) \in \mathbb R^n \times \mathbb R. 
\end{equation}

\begin{thm}[local well-posedness]\label{lwpHar}Let  $0<\nu<\min \{
\beta,n\}$ and $ \frac{2n}{2n-1} < \beta < 2$ for $n \geq 2.$  
  Assume $u_{0}\in L^2_{rad}+M^{\frac{4n}{2n-\nu},\frac{4n}{2n+\nu}}_{rad}.$ Then there exists $T^*=T^*(\|u_{0}\|_{L^2+M^{\frac{4n}{2n-\nu},\frac{4n}{2n+\nu}}},n,\nu,\beta)>0$ and a unique maximal  solution $u$ of  \eqref{FNLSH} such that 
    \begin{equation*}
    u\in \big(C([0,T^*),L^2_{rad})\cap L^{4\beta / {\nu}}([0,T^*),L^{4n/(2n-\nu)}) \big)+C([0,T^*),M^{\frac{4n}{2n-\nu},\frac{4n}{2n+\nu}}_{rad}).
     \end{equation*}
      \end{thm}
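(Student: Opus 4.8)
The plan is to mirror the scheme behind Theorem~\ref{lwp}, replacing the pointwise power estimate by a Hardy--Littlewood--Sobolev (HLS) bound for the convolution potential. Write $U(t)=e^{-it(-\Delta)^{\beta/2}}$, abbreviate $p_{0}=\tfrac{4n}{2n-\nu}$ (so the target modulation space is $M^{p_{0},p_{0}'}$ with $p_{0}'=\tfrac{4n}{2n+\nu}$), and introduce the Strichartz pair $(q,r)=\bigl(\tfrac{4\beta}{\nu},\tfrac{4n}{2n-\nu}\bigr)$. Note that $r=p_{0}$ and that $\tfrac{\beta}{q}+\tfrac{n}{r}=\tfrac n2$, so $(q,r)$ is admissible for the radial fractional Strichartz estimates available in the range $\tfrac{2n}{2n-1}<\beta<2$. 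First I would split the data as $u_{0}=v_{0}+w_{0}$ with $v_{0}\in L^{2}_{rad}$ and $w_{0}\in M^{p_{0},p_{0}'}_{rad}$, set $w(t)=U(t)w_{0}$ to be the free evolution of the modulation-space piece, and seek the solution in the form $u=v+w$, where $v$ solves the Duhamel equation
\begin{equation*}
v(t)=U(t)v_{0}-i\int_{0}^{t}U(t-s)\,F\bigl(v(s)+w(s)\bigr)\,ds,\qquad F(u)=\pm\bigl(|\cdot|^{-\nu}\ast|u|^{2}\bigr)u,
\end{equation*}
in the complete space $X_{T}=C([0,T],L^{2}_{rad})\cap L^{q}([0,T],L^{r})$.

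Next I would assemble the three ingredients driving the contraction. The first is the pair of homogeneous and inhomogeneous radial Strichartz estimates $\norm{U(t)v_{0}}_{X_{T}}\lesssim\norm{v_{0}}_{L^{2}}$ and $\norm{\int_{0}^{t}U(t-s)G(s)\,ds}_{X_{T}}\lesssim\norm{G}_{L^{q'}_{t}L^{r'}_{x}}$; this is precisely where the hypothesis $\beta>\tfrac{2n}{2n-1}$ is used, since for $\beta\neq2$ the estimate at this pair fails without the radial improvement. The second is the boundedness of the unimodular Fourier multiplier $U(t)$ on modulation spaces, which keeps $w\in C([0,T],M^{p_{0},p_{0}'}_{rad})$ with $\sup_{[0,T]}\norm{w(t)}_{M^{p_{0},p_{0}'}}\lesssim\norm{w_{0}}_{M^{p_{0},p_{0}'}}$; combined with the embedding $M^{p_{0},p_{0}'}\hookrightarrow L^{p_{0}}=L^{r}$ recorded in the excerpt, this places $w\in L^{\infty}([0,T],L^{r})$. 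The third is that both $U(t)$ and $F$ preserve radial symmetry, so the iterates stay radial.

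The core computation is the nonlinear estimate in the dual Strichartz norm. Spatially, writing $V=|\cdot|^{-\nu}\ast(fg)$, HLS gives $\norm{V}_{L^{2n/\nu}}\lesssim\norm{fg}_{L^{r/2}}\le\norm{f}_{L^{r}}\norm{g}_{L^{r}}$, and H\"older (with $\tfrac1{r'}=\tfrac{\nu}{2n}+\tfrac1r$) then yields $\norm{Vh}_{L^{r'}}\lesssim\norm{f}_{L^{r}}\norm{g}_{L^{r}}\norm{h}_{L^{r}}$; the exponent $r=\tfrac{4n}{2n-\nu}$ is tuned exactly so the cubic Hartree term closes in $L^{r}$. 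Expanding $F(v+w)$ into its trilinear pieces and bounding each factor in $L^{r}$ gives $\norm{F(v+w)(t)}_{L^{r'}_{x}}\lesssim\bigl(\norm{v(t)}_{L^{r}}+\norm{w(t)}_{L^{r}}\bigr)^{3}$. Integrating in time and applying H\"older, the decisive point is that $3q'<q$, which is equivalent to $\nu<\beta$ and hence guaranteed by mass subcriticality; this produces a gain $T^{\theta}$ with $\theta=1-\tfrac{4}{q}=1-\tfrac{\nu}{\beta}>0$. Together with the analogous difference estimate for $F(v_{1}+w)-F(v_{2}+w)$, this makes the Duhamel map a contraction on a ball of $X_{T}$ once $T$ is small, with $T^{*}$ depending only on $\norm{v_{0}}_{L^{2}}+\norm{w_{0}}_{M^{p_{0},p_{0}'}}$; a standard blow-up alternative then yields the unique maximal solution $u=v+w$ in the asserted space.

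I expect the genuine obstacle to lie not in the contraction itself---which, by the choice of $(q,r)$, reduces to clean HLS and H\"older bookkeeping---but in securing its two external inputs simultaneously at these exponents: the sharp radial Strichartz estimates for $(-\Delta)^{\beta/2}$ throughout $\tfrac{2n}{2n-1}<\beta<2$, and the uniform-in-$t$ boundedness of $U(t)$ on $M^{p_{0},p_{0}'}$. A subtler difficulty is the treatment of the mixed trilinear terms in which the $L^{2}$-Strichartz part $v$ meets the possibly infinite-mass modulation part $w$: here no conservation law is available for $w$, so every such term must be controlled purely through the embedding $M^{p_{0},p_{0}'}\hookrightarrow L^{r}$ and the modulation-space boundedness of the propagator, which is what forces the particular scale of the modulation space in the hypothesis.
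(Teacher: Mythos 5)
Your proposal is correct and follows essentially the same route as the paper: the same splitting of the data into $L^2_{rad}+M^{\frac{4n}{2n-\nu},\frac{4n}{2n+\nu}}_{rad}$, the same radial Strichartz pair $(q,r)=\big(\frac{4\beta}{\nu},\frac{4n}{2n-\nu}\big)$ from Proposition \ref{fst}, the same HLS-plus-H\"older trilinear bound (the paper's Lemma \ref{lemlwpHar}) with time gain $T^{1-\nu/\beta}$, and the same use of Proposition \ref{uf} together with the embedding $M^{\frac{4n}{2n-\nu},\frac{4n}{2n+\nu}}\hookrightarrow L^{\frac{4n}{2n-\nu}}$. The only organizational difference is that you contract for the perturbation $v$ with $w=U_{\beta}(t)w_0$ frozen, whereas the paper contracts for the full solution $u$ in the sum space $\tilde{X}(T)$ (taking the infimum over decompositions); since the Duhamel term lands in the Strichartz component $X_3(T)$ either way the two schemes are equivalent, and your phrase ``uniform-in-$t$ boundedness'' should be read as the $(1+|t|)^{n\left(\frac{1}{2}-\frac{2n-\nu}{4n}\right)}$-growing bound of Proposition \ref{uf}, which is harmless on $[0,T]$ with $T\leq 1$.
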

       The local solution established in Theorem \ref{lwpHar} can be extended to a global solution.  
\begin{thm}[global well-posedness]\label{gwpHar}  Let  $0<\nu<\min \{
\beta,n\}$ and $ \frac{2n}{2n-1} < \beta < 2$ for $n \geq 2.$ Assume that  $u_{0}\in M^{s,s'}_{rad}$ for $s\in (2,s_{max})$ where
\begin{equation}\label{qmax}s_{\max} := 
\frac{2n(4\beta-\nu)}{n(4\beta-\nu)-\nu(\beta-\nu)}. 
\end{equation}Then the local solution constructed in Theorem \ref{lwpHar} of  \eqref{FNLSH} extends globally and lies in
\begin{equation*}
    \big(C(\R,L^2_{rad}) \cap L^{4\beta / {\nu}}_{loc}(\R,L^{4n/(2n-\nu)})\big)+ C(\R,M^{\frac{4n}{2n-\nu},\frac{4n}{2n+\nu}}_{rad}).
\end{equation*}
\end{thm}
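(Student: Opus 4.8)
The plan is to run Bourgain's high--low frequency decomposition for the fractional flow, in exact parallel to the proof of Theorem \ref{gwp}, with the Hartree nonlinearity replacing the power one and the Hartree Strichartz exponents $r=\tfrac{4n}{2n-\nu}$, $q=\tfrac{4\beta}{\nu}$ in place of the power ones. First I would record the conservation laws and the energy-space theory: for $H^{\beta/2}$-data the mass $M(u)=\|u\|_{L^2}^2$ and the Hamiltonian $E(u)=\tfrac12\|(-\Delta)^{\beta/4}u\|_{L^2}^2\pm\tfrac14\iint |x-y|^{-\nu}|u(x)|^2|u(y)|^2\,dx\,dy$ are conserved, and since $\nu<\beta$ the problem is mass--subcritical, so the Hartree Gagliardo--Nirenberg inequality bounds $\sup_t\|u(t)\|_{H^{\beta/2}}$ in terms of $M$ and $E$ of the data and the $H^{\beta/2}_{rad}$ Cauchy problem is globally well posed. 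I would also use the two embeddings furnished by \eqref{HsMpq}, namely $H^{\beta/2}\hookrightarrow M^{r,r'}$ (valid since $\tfrac{\beta}{2}>\tfrac{\nu}{4}=n(\tfrac12-\tfrac1r)$, because $\nu<\beta<2\beta$) and $M^{r,r'}\hookrightarrow L^r$, to pass between the energy space and the resolution spaces of Theorem \ref{lwpHar}.

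Next, fix $T>0$ and a large parameter $N$, and split $u_0=\phi_0+\psi_0$ with $\phi_0=\sum_{|k|_\infty\le N}\square_k u_0$ and $\psi_0=u_0-\phi_0$. The high part obeys $\|\psi_0\|_{M^{s,s'}}\to 0$ as $N\to\infty$ (tail of a convergent $\ell^{s'}$ sum), while the low part satisfies $N$--dependent bounds $\|\phi_0\|_{L^2}+N^{-\beta/2}\|\phi_0\|_{H^{\beta/2}}\lesssim N^{\theta}\|u_0\|_{M^{s,s'}}$ for an explicit $\theta=\theta(n,\nu,\beta,s)>0$, where radiality and the sharp modulation embeddings are used. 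Set $w(t)=e^{-it(-\Delta)^{\beta/2}}\psi_0$; since the fractional Schr\"odinger propagator is bounded on modulation spaces, $w\in C(\R,M^{r,r'}_{rad})$ with $\|w\|_{L^\infty_t M^{r,r'}}\lesssim\|\psi_0\|_{M^{s,s'}}$, and by $M^{r,r'}\hookrightarrow L^r$ together with the radial Strichartz inequality $w$ is small in $L^{q}([0,\delta],L^r)$. I then define $v$ by the forced equation $i v_t-(-\Delta)^{\beta/2}v\pm(|\cdot|^{-\nu}*|v+w|^2)(v+w)=0$ with $v(0)=\phi_0\in H^{\beta/2}_{rad}$; Theorem \ref{lwpHar} produces $v\in C([0,\delta],L^2)\cap L^{q}([0,\delta],L^r)$, and $u=v+w$ is the desired solution displaying the asserted decomposition.

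The heart of the argument is the energy increment and its iteration. Expanding the Hartree nonlinearity of $v+w$ gives the pure term $(|\cdot|^{-\nu}*|v|^2)v$ together with cross terms each carrying at least one factor $w$, and these cross terms force $E(v)$. Using the radial fractional Strichartz estimates --- available precisely because $\tfrac{2n}{2n-1}<\beta<2$ and the data are radial --- and the Hardy--Littlewood--Sobolev inequality for the potential $|x|^{-\nu}$, I would bound the increment over one step of length $\delta$ by $E(v(t+\delta))-E(v(t))\lesssim N^{-\gamma}\,P\!\big(\sup_{[0,t]}E(v)\big)$ for a positive power $\gamma$ and a fixed polynomial $P$. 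Covering $[0,T]$ by $\sim T/\delta$ such steps and running a bootstrap, $\sup_{[0,T]}E(v)$ stays bounded once $N=N(T)$ is taken large enough, provided the gain $N^{-\gamma}$ per step defeats both the step count $T/\delta$ and the initial growth $N^{\theta}$ of $E(\phi_0)$; balancing these competing powers of $N$ is exactly what forces $s<s_{max}$ with $s_{max}$ as in \eqref{qmax}. Hence $v$, and therefore $u=v+w$, extends to $[0,T]$, and since $T$ is arbitrary the solution is global.

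The main obstacle will be this last balancing. Everything reduces to showing that the net exponent of $N$ produced by the energy--increment estimate, after multiplication by the number of time steps and against the $N^{\theta}$ initial energy, is negative exactly on the range $s\in(2,s_{max})$; the delicate points are the sharp $N$--power in the bound for $\|\phi_0\|_{H^{\beta/2}}$ and the sharp radial Strichartz gain controlling the $w$--cross terms, whose interplay pins down the threshold \eqref{qmax}.
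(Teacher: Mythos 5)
Your overall Bourgain-type scheme is the right family of argument, but there is a concrete, fatal gap at the very first step: the decomposition. You split $u_0=\phi_0+\psi_0$ with $\phi_0=\sum_{|k|_\infty\le N}\square_k u_0$, and claim $\|\phi_0\|_{L^2}+N^{-\beta/2}\|\phi_0\|_{H^{\beta/2}}\lesssim N^{\theta}\|u_0\|_{M^{s,s'}}$. For $s>2$ this is false: a sharp frequency cutoff of an $M^{s,s'}$ function need not lie in $L^2$ at all, since a frequency-localized $L^s$ function with $s>2$ can fail to be square integrable (take $f=\mathcal{F}^{-1}\chi \ast h$ with $h(x)=(1+|x|)^{-a}$, $n/s<a\le n/2$: then $\hat f$ has compact support, $f\in L^s$, $f\notin L^2$). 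Bernstein only raises integrability, so $\|\square_k u_0\|_{L^2}$ is not controlled by $\|\square_k u_0\|_{L^s}$. The high part has the dual problem: you get smallness of $\psi_0$ in $M^{s,s'}$, but what the local/perturbation theory needs is smallness in $M^{\frac{4n}{2n-\nu},\frac{4n}{2n+\nu}}$, and $\ell^{s'}\not\hookrightarrow\ell^{r'}$ when $r>s$ (i.e. $r'<s'$), so the tail bound does not transfer. This is exactly why the paper does not use a frequency cutoff but the Feichtinger-interpolation splitting of Lemma \ref{ipt} (an amplitude-plus-frequency decomposition coming from the identification of $M^{s,s'}$ as an interpolation space between $L^2$ and $M^{r,r'}$), which delivers precisely $\|\phi_0\|_{L^2}\lesssim \tilde N^{\tilde\gamma}$ and $\|\psi_0\|_{M^{\frac{4n}{2n-\nu},\frac{4n}{2n+\nu}}}\lesssim 1/\tilde N$ as in \eqref{asihar}.

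Second, your iteration runs at the wrong conserved quantity: you propose an energy-increment argument for $E(v)$ in $H^{\beta/2}$, which would additionally require a global $H^{\beta/2}_{rad}$ theory for the fractional Hartree flow and energy-increment estimates that are nowhere established (and, given the failed decomposition, $\phi_0$ is not even in the energy space). The paper's proof never touches the Hamiltonian: it works entirely at the mass level, using the global $L^2_{rad}$ theory (Theorem \ref{GWPL2Rad} and \cite[Proposition 3.4]{DGBJDE}) for the low part, Proposition \ref{w0existhar} for the perturbation equation under the smallness conditions \eqref{D1}--\eqref{D3}, and the per-step mass increment $\|w_k\|_{L^{\infty}_{T}L^{2}}\lesssim T^{\theta/4}\|\psi\|_{M^{\frac{4n}{2n-\nu},\frac{4n}{2n+\nu}}}$ of Corollary \ref{winfty2har}. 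The threshold \eqref{qmax} then falls out of an explicit balance — $T(\tilde N)=(3C\tilde N^{\tilde\gamma})^{-2/(1-\theta)}$ against the step count $K$ and the increment $T^{\theta/4}/\tilde N$, yielding $KT\gtrsim \tilde N^{1-\tilde\gamma\frac{2+\theta}{2(1-\theta)}}$ — whereas in your write-up the claim that the balancing ``pins down'' $s_{max}$ is asserted, not derived. To repair your proposal you would need to (i) replace the frequency cutoff by the Lemma \ref{ipt} splitting, and (ii) either build the missing $H^{\beta/2}$ machinery or, more economically, run the increment in $L^2$ as the paper does.
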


The study of Cauchy problem for  Hartree equations has long history  and has appeared in various physical settings e.g. white dwarfs and many particle physics etc.  \cite{Enno, Lieb}, \cite[Section 1.1]{BhimaniHartree-Fock}. In fact, it is   widely studied with Cauchy data in  $H^s$. 
Theorem \ref{gwpHar} complements this, see Subsection \ref{pw} below.

We now pursue further discussion on Theorem \ref{gwp} and Theorem \ref{gwpHar} in the following  subsection.
\subsection{Prior work for NLS and Hartree equations}\label{pw} FNLS \eqref{FNLS} exhibits a variety of rich structures, a few of which we highlight here. Formally, the solution of   \eqref{FNLSP} and \eqref{FNLSH} enjoy conservation of mass:
\begin{align}
M[u(t)] &= \int_{\mathbb{R}^n} |u(x, t)|^2 \, dx = M[u_0]. \label{mass}
\end{align}
Both the fractional NLS \eqref{FNLSP} and Hartree equation \eqref{FNLSH}  exhibit scaling symmetry:
\begin{itemize}
    \item If $u(x,t)$ is a solution of  \eqref{FNLSP} having initial data $u_{\lambda}(x,0)=\lambda^{-\frac{\beta}{\alpha}} u (  \lambda^{-1} x,0)$, then  $u_{\lambda}(x,t)= \lambda^{-\frac{\beta}{\alpha}} u (  \lambda^{-1} x,\lambda^{\beta} t)$ for $\lambda>0$ is
 also a solution of it.
 The $\dot{H}^{s}-$norm is invariant under this scaling  for $$s=s_{c}=\frac{n}{2}- \frac{\beta}{ \alpha},$$ so called critical Sobolev index for \eqref{FNLSP}.
 \item  If $u(x,t)$ solves \eqref{FNLSH}, then $u_{\lambda}$ defined as $u_{\lambda} (x, t) = \lambda^{\frac{n-\nu + \beta}{2}} u( \lambda x, \lambda^\beta t)$ also solves \eqref{FNLSH} with scaled initial data.  The $\dot{H}^{s}-$norm is invariant under this scaling  for 
 $$s=s_{c}=\frac{\nu-\beta}{2},$$ so called critical Sobolev index for \eqref{FNLSH}.
\end{itemize}
The scaling provides heuristics indicating that  \eqref{FNLSP} and \eqref{FNLSH} are well-posedness in $H^s$ for $s>s_c$ (sub-critical) and ill-posed in $H^s$ for $s<s_c$ (super-critical). 
Now one may ask whether there exists a class of data out of $H^{s_c}$ for which it is still well-posed. Modulation spaces turned out to be a very useful tool in order to answer this question. In fact, $M^{2,1}_{s_c}\subset  H^{s_c}$, $M^{2,1}_s\not\subset H^{s_c}$ for $s<s_c$; and  $H^{s_c}\not\subset M^{p,p'}$ for $p>2$, $s_c<0$.   See e.g. \cite[Remark 2.1]{BhimaniJDE2},  \cite{Ruz4NLS, Wang2006} and the references therein. We note that the norm of modulation space does not  preserve proper scaling symmetry \footnote{In \cite{BhimaniNorm, oh2018global}, authors considered almost scaling criticality.}. However, Bhimani et al. \cite{BhimaniNorm, BhimaniJFA2}  proved   \eqref{FNLSP} and  \eqref{FNLSH} are ill-posed in $M^{p,q}_s$ under certain negative regularity.

We briefly mention some  known results (among others)
having power-type and Hartree nonlinearity in the subcritical regime with initial data in either  Sobolev or modulation space:\\ 
$-$\textbf{Classical case $\beta=2$:}
\begin{enumerate}
    \item Tsutsumi  \cite{YTsutsumi} proved  \eqref{FNLSP} is  GWP in $L^{2}$ for $\alpha \in (0,\frac{4}{n})$. 
    Zhao, Wang and Guo  \cite{Wang2006} proved  \eqref{FNLSP} is LWP in $M^{2,1}$ for $\alpha \in 2 \mathbb N.$ Later, it is extended in $M_s^{p,1} \ (1\leq p \leq \infty, s\geq 0).$  
    See \cite{Kasso2009}. While Wang and Hudzik \cite{wango}   
 proved  classical NLS  \eqref{FNLSP}  is GWP for small data in $M^{2,1}$.  Oh and Wang \cite{oh2018global}  proved GWP for 1D cubic NLS  \eqref{FNLSP} in $M^{2,q}$ for $2\leq q< \infty.$  Later, it is extended  \cite{Klaus} in $X$ where $X=M^{p,q}$ for $1/q \geq 1/p, 1\leq p \leq 2$ or $M^{p,q}_s$ for $s>1/q', 1\leq p \leq 2$  or $M_1^{p,1}$ for $2\leq p< \infty.$ 
   Chaichenets et al. in \cite{LeonidIn,leonidthesis} established  GWP for $1$D cubic NLS \eqref{FNLSP} in  $M^{p, p'}$ for $p$ sufficiently close to 2. 
   \item In the early 1970s,  Chadam and  Glassey    proved global existence  for classical Hartree equation in the energy space $H^1$ in their seminal work \cite{ChadamGlassey}. Since then many authors  have studied Cauchy problem for the Hartree equation, see e.g. influential  papers \cite{MiaoJPDE, MiaoJFA} by  Miao, Xu, and  Zhao. 
   
   Carles \cite{Carles} proved \eqref{FNLSH}  is GWP in $L^2$ for $0<\nu< \min\{2, n\}.$ Moreover, it is GWP in $L^2\cap \mathcal{F}L^1$ for  $0<\nu< \min\{2, n/2\}$ where $\mathcal{F}L^1=\{f \in \mathcal{S'}: \hat{f}\in L^1\}$ (Wiener algebra). 
Later, Bhimani \cite[Theorem 1.1]{bhimani2016cauchy}  proved it is 
GWP  in $M^{1,1}$ and further extended by Manna \cite[Theorem 1.1]{manna2017modulation}  in $M^{p,p}$ for $1 \leq p < \frac{2n}{n+\nu}.$ Moreover, Bhimani  \cite{DGBJDE} extended  the range of $p, q$, specifically, it is GWP in $M^{p,q}$ with $1 \leq p \leq 2, 1 \leq q <
\frac{2n}{n+\nu}.$ In \cite{RameshManna}, Manna established  global existence for   $1$D  classical Hartree equation for  $0 <\nu <1$ in $M^{p,p'}(\mathbb R)$ for $2<p$ and sufficiently small. We also mention that Kim, Lee and Seo \cite{DiracHartree}  established LWP for Dirac equations with a Hartree type nonlinearity in modulation spaces. While Bhimani, Grillakis and Okoudjou \cite{BhimaniHartree-Fock} also treated  Hartree   equation \eqref{FNLSH} in the presence  of harmonic potential. 
\end{enumerate}
$-$\textbf{Smaller dispersion case  $0<\beta<2$:}
\begin{enumerate}
    \item  Guo-Huo \cite{Bguo} proved 1D  cubic  \eqref{FNLSP} is GWP in $L^2$ for  $ \beta \in (1,2).$  Hong and Sire  \cite{HongSire} established  LWP for  \eqref{FNLSP} with $\beta \in (0,2)  \setminus \{1\}$ in $X$, where $X= H^s(\mathbb R)$ when $\alpha \geq 4, s>s_c$ or $H^s(\mathbb R^n)$ when $n\geq 2, \alpha \geq 2, s>s_c$. They also established small data GWP and scattering in some critical Sobolev spaces, specifically for   $X= H^{s_c}(\mathbb R)$ when $\alpha > 4$ or $H^{s_c}(\mathbb R^2)$ when $\alpha>3.$
Cho et al. \cite{Chodcds} proved LWP for 1D cubic  \eqref{FNLSP} with $1<\beta<2$  in $H^s$ for $s \geq \frac{2-\beta}{4}$.  Also see \cite{dinh2016}.  B\'enyi and Okoudjou \cite[Remark 1]{Kasso2009} proved LWP  for  \eqref{FNLSP} with $\beta \in [0,2]$ in $M^{p,1}_s$  $(s\geq 0, 1\leq p \leq \infty)$ for $\alpha \in  2 \mathbb N.$
     \item  Cho,  Hajaiej, Hwang, and  Ozawa  proved  GWP for \eqref{FNLSH} in $H^s$ for $s \geq \nu /2$ in their seminal work \cite{Cho}. 
     Bhimani, Okoudjou and Grillakis \cite{BhimaniHartree-Fock}   established GWP  for  \eqref{FNLSH}  for radial initial data in  $M^{p,q}$  $(1 \leq p \leq 2, 1 \leq q \leq 
\frac{2n}{n+\nu})$  for  $0<\nu < \min \{\beta,\frac{n}{2}\}$ and  $ \frac{2n}{2n-1} < \beta < 2$ with $n \geq 2.$ 
\end{enumerate}      
$-$\textbf{Larger dispersion case $\beta> 2$:}
\begin{enumerate}
    \item Dinh \cite[Corollary 4]{dinh2016} proved GWP for  \eqref{FNLSP} in $L^2$ for $\alpha \in (0,\frac{2\beta}{n})$. 
 Seong \cite{SeongK} proved 1D cubic   \eqref{FNLSP} is GWP in $H^{s}(\mathbb R)$ for $s \geq -1/2$ for $\beta =4$. 
Ruzhansky, Wang and Zhang \cite{Ruz4NLS}  established small data GWP for fourth order $(\beta=4)$  \eqref{FNLSP} in some weighted modulation spaces.  Kato \cite[Theorem 1.1]{T.Kato} established small data GWP in some modulation spaces. Lu \cite[Theorems 6 and 7]{LuBnls} proved LWP for fourth-order \eqref{FNLSP} in $M_{s}^{p,2} \ ( 10/3 \leq p \leq 6, s > 7/2 -2/p
)$.
\item For well-posedness theory in the realm of generalized modulation spaces, so called $\alpha-$modulation space, we  refer to   \cite{alphaMS3} and the references therein. We  also note that Chaichenets and Pattakos \cite{NLSHOAID21,NLSHOAID23} established  global solution to the NLS with higher-order anisotropic dispersion for small initial data in  certain modulation spaces $M_{s}^{p,q}$.

\end{enumerate}
Taking the aforementioned results into account, we note that GWP results for \eqref{FNLSP} and \eqref{FNLSH}  \ $(\beta\neq 2)$ having large initial data in modulation space $M^{p,p'} \ (p>2)$ remain unsolved.  Theorems \ref{gwp} and \ref{gwpHar} resolve this for large radial initial data and complement well-posedness theory studied in Sobolev spaces.  

\subsection{Proof techniques and novelties}
The main ingredients in well-posedness theory  developed in several papers for   \eqref{FNLSP} in  modulation spaces are  the following:
\begin{itemize}
    \item The Schr\"odinger propagator $e^{-it(-\Delta)^{\beta/2}} $ is bounded on \( M^{p,q} \) and further  enjoys time decay estimates.  See Proposition \ref{uf} and cf. \cite{FabioPAMS}.
    \item The space $M^{p,1}_{s}$ is an algebra under pointwise multiplication. See e.g. \cite{Kasso2009,FeiStudia}.
    \item In \cite{RSJFA,alphaMS3,LuBnls},  authors used $U^{p}$-$V^{p}$ spaces taking values in modulation spaces as iteration spaces to establish LWP in $M^{2,q}_{s}$ for certain values of $s$ and $q.$ 
\end{itemize}
These properties  help to get the LWP and small data GWP in modulation spaces, see e.g. \cite{Kasso2009, wango, Wang2006, Bhimani2016,RSJFA, NLSHOAID21,NLSHOAID23}. 
However, there is no useful conservation law in modulation spaces. This raises  mathematical challenges  to establish GWP for large data. In this paper, we aim to address this for \eqref{FNLSP} and \eqref{FNLSH}.

Our method of proof is inspired from Bourgain’s high-low frequency decomposition method.
 It was introduced by Bourgain \cite{Bourgain1999} to establish GWP for NLS having power-type nonlinearity in $H^{s}$ for $s>\frac{3}{5}$. 
See \cite[Section 3.2]{KenigonBourgain}, \cite{vargas2001global} and \cite[Section 3.9]{TaoBook} for more details. The main idea is to break down the initial  data into a low-frequency/smoother component and
a high-frequency/rougher component. The choice of the cutoff for high and low-frequency component depends on the regularity of the data. The low-frequency component has finite energy after the cutoff, hence its evolution exists globally. On the other hand,  the nonlinear evolution to the difference
equation for the high-frequency component has small energy in an interval of time of existence. Such smallness of the Duhamel term in the smoother energy
space allows the iteration to continue by merging this smoother component with the evolution of the low-frequency component. Chaichenets et al. \cite{LeonidIn,leonidthesis} and Manna \cite{RameshManna} have  successfully adapted this method  for classical NLS \eqref{FNLSP}  $(\beta = 2)$  and classical Hartree eqaution \eqref{FNLSH} in  modulation spaces  respectively. In this paper, we extend this work to the case $ \frac{2n}{2n-1} < \beta < 2$ for $n \geq 2.$ We note that due to the non-locality and non-smoothness of the fractional dispersion operator $(-\Delta)^\beta$, the problem becomes significantly more challenging. Therefore, our analysis involves a new technicalities  and  we need to make  a radial assumption on the initial data. 

We briefly outline the strategy for our proof of Theorem \ref{gwp}.
   \begin{enumerate}
       \item[-] We decompose the initial data into two parts by making use of the interpolation result formulated in Lemma \ref{ipt} for radial functions as:
       \[u_0=\phi_0 + \psi_0 \in L^2_{rad} + M^{(\alpha+2), (\alpha +2)'}_{rad}\]
       such that  $\|\phi_0\|_{L^2} \lesssim_{p} N^{\gamma}$ for some $\gamma>0$ and $\|\psi_0\|_{M^{(\alpha+2), (\alpha+2)'}} \lesssim_{p} N^{-1}.$ See \eqref{dp} and \eqref{asi}.
\item[-] We have a local solution to \eqref{FNLS} in  $L^2_{rad} + M^{(\alpha+2), (\alpha +2)'}_{rad}$ by Theorem \ref{lwp}. Also, we have a global existence of \eqref{FNLS} for initial data in $L^{2}_{rad}$ due to mass conservation \eqref{mass}, see Theorem \ref{GWPL2Rad}.
       \item[-] To address global well-posedness, we construct a solution  $u$ of \eqref{FNLS} of the form \[u= (v_{0} + w_{0}) + e^{it\Delta} \psi_0.\] 
       Here $v_{0} $ is the $L^{2}_{rad}-$ global solution of \eqref{FNLS} with data $\phi_0$, see \eqref{gwpl2}. While $U_{\beta}(t)\psi_0 \in M^{(\alpha +2), (\alpha+2)'}$ for $t\in \R$ is the linear evolution of $\psi_0$ and $w_{0}$ is the nonlinear interaction term; see \eqref{w01}. See \eqref{solnlocal}.
       \item[-] Since $\|\psi_0\|_{M^{(\alpha+2), (\alpha+2)'}}\lesssim_{p} N^{-1}$ can be made small, we get $v_{0} + w_{0}$ close to $v_{0}$ in $L^2_{rad}.$ Even though $M(v_{0} + w_{0})$ is not conserved, it grows slowly enough to ensure the existence of a global solution. See Subsection \ref{secgwp} for the proof of Theorem \ref{gwp}.
   \end{enumerate} 

\begin{Remark} Overall proof strategy  for Theorem \ref{gwpHar} is similar to that of Theorem \ref{gwp}. However, we note that the technicalities are quite different due to the presence of non-local nonlinearity $(|x|^{-\nu}\ast |u|^2)u$.
\end{Remark}
\begin{Remark}
    The sign $+1$ (resp. $ -1$) corresponds to the focusing (resp. defocusing) case. However, the sign is irrelevant in our analysis.
\end{Remark}
\begin{Remark}

The proof of Theorem \ref{lwp}-\ref{gwpHar} relies on the radial
assumption, which removes the problematic evolution seen in the non-radial cases (such as the Knapp counterexample). The Strichartz estimates as stated in Theorem \ref{fst} without loss of derivatives hold only in the radial case. Guo et al. \cite{guo2014improved}
presented the Knapp counterexample to show that the general non-radial Strichartz
estimates have loss of derivative for $1 <\beta < 2.$ Moreover, with these results, we may not have GWP in $L^2$ with the desired property (cf. Theorem \ref{GWPL2Rad} and \cite[Proposition 3.4]{DGBJDE}) which is essential to proving Theorem \ref{gwp} and Theorem \ref{gwpHar} respectively.
   On the other hand, it would be interesting to  investigate \eqref{FNLS} for the  remaining dispersion  $\beta$  and non-radial initial data.  
\end{Remark}

This article is organized as follows: In Section \ref{np}, we introduce the basic notations and tools that will be used throughout the paper. This section includes definitions of function spaces, some of the key properties and relevant lemmas. Section \ref{s3} is dedicated to proving a global well-posedness result for \eqref{FNLSP} in $L^2_{rad}$ which will serve as a fundamental tool for obtaining global well-posedness results in the context of modulation spaces (Theorem \ref{gwp}). While Section \ref{s4} presents the proofs of Theorems \ref{lwp} and \ref{gwp}. Finally, in Section \ref{s5}, we discuss the proofs of Theorems \ref{lwpHar} and \ref{gwpHar}.

\section{Preliminaries}\label{np}
  \subsection{Notations}\noindent The symbol $X \lesssim_{\alpha} Y$ means 
 $X \leq CY$ 
for some positive constant $C$ depending on the parameter $\alpha.$ While $X \approx Y $ means $C^{-1}X\leq Y \leq CX$ for some constant $C>0.$ 
 The norm of the space-time Lebesgue space $L^{q}([0,T],L^{r}(\R^{n}))$ is defined as
$$\|u\|_{L^{q}_{T}L^{r}}:=\|u\|_{L^{q}([0,T],L^{r}(\R^{n}))}=\left(\int_{0}^{T} \|u(\cdot,t)\|_{L^{r}(\R^{n})}^{q} dt\right)^{\frac{1}{q}}.$$
We simply write $\|u\|_{L^{q}L^{r}}$ in place of $\|u\|_{L^{q}(\R,L^{r}(\R^{n}))}.$ 
\subsection{Key Ingredients} 
For $f\in \mathcal{S}(\mathbb R^{n})$ and $\beta >0,$ we define the \textbf{fractional Schr\"odinger propagator $e^{-it(-\Delta)^{\beta/2}}$ }for $t \in \mathbb R$ as follows:
\begin{eqnarray}
\label{sg}\label{-1}
[U_{\beta}(t)f](x)=\left[e^{-it (-\Delta)^{\beta/2}}f\right](x):= \int_{\mathbb R^n}  e^{-(2\pi)^\beta i |\xi|^{\beta}t}\, \widehat{f}(\xi) \, e^{2\pi i \xi \cdot x} \, d\xi.
\end{eqnarray}

\begin{Definition} A pair $(q,r)$ is \textbf{$\beta$-fractional admissible pair} if  $q\geq 2, r\geq 2$ and
$$\frac{\beta}{q} =  n \left( \frac{1}{2} - \frac{1}{r} \right),(q,r,n)\neq(\infty,2,2).$$
\end{Definition}

The set of all such admissible pairs is denoted by $$\mathcal{A}_{\beta}= \{(q,r):(q,r) \; \text{is $\beta$-fractional admissible pair} \}.$$

\begin{prop}[Strichartz estimates for fractional Schr\"odinger equation] \label{fst}Denote
$$DF(x,t):=  U_{\beta}(t)u_{0}(x)  + i\int_0^t U_\beta(t-s)F(x,s) ds.$$
\begin{enumerate}
\item \label{fst1}  \cite{KeelTao1998} Let $u_{0} \in L^2,$ $n\in \mathbb N$ and $\beta=2.$   Then for any time interval $I\ni0$ and 2-admissible pairs $(q_j,r_j)$, $j=1,2,$ 
satisfying 
$$ \|D(F)\|_{L^{q_1}(I,L^{r_1})}  \lesssim_{n,r_1,r_2}   \|u_{0} \|_{L^2}+    \|F\|_{L^{q'_2}(I,L^{r'_2})}, \quad\forall F \in L^{q_2'} (I, L^{r_2'}(\R^n))$$  where $q_j'$ and $ r_j'$ are H\"older conjugates of $q_j$ and $r_j$
respectively.

\item \label{fst2} \cite[Corollary 3.10]{guo2014improved} Let  $u_{0} \in L_{rad}^2$, $n\geq 2,$ and  $\frac{2n}{2n-1} < \beta < 2.$   Then for any time interval $I\ni0$ and $(q_j, r_j) \in \mathcal{A}_{\beta}$, $j=1,2,$ 
such that
$$ \|D(F)\|_{L^{q_1}(I,L^{r_1})}  \lesssim_{n,\beta,r_1, r_2}   \|u_{0}\|_{L^2}+     \|F\|_{L^{q'_2}(I,L^{r'_2})}, \quad \forall F \in L^{q'_2}(I,L_{rad}^{r'_2}(\R^n))$$  where $q_j'$ and $ r_j'$ are H\"older conjugates of $q_j$ and $r_j$ respectively.
\end{enumerate}
\end{prop}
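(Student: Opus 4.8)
The plan is to derive both parts from the abstract machinery of Keel and Tao \cite{KeelTao1998}, which reduces a whole family of space-time estimates to just two inputs: an energy bound $U_\beta(t)\colon L^2\to L^2$ and a dispersive bound $U_\beta(t)\colon L^1\to L^\infty$ with a power decay in $t$. Part \eqref{fst1} is exactly the classical case and follows verbatim from \cite{KeelTao1998}: for $\beta=2$ the propagator has an explicit Gaussian kernel, giving the sharp dispersive decay $\norm{U_2(t)f}_{L^\infty}\lesssim |t|^{-n/2}\norm{f}_{L^1}$, so no radial or fractional input is required there.

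For part \eqref{fst2} I would first record the energy estimate: since $U_\beta(t)=\mathcal F^{-1}e^{-(2\pi)^\beta i|\xi|^\beta t}\mathcal F$ is a Fourier multiplier of unit modulus, Plancherel gives $\norm{U_\beta(t)f}_{L^2}=\norm{f}_{L^2}$ for every $t$. The substantive ingredient is the dispersive estimate, with decay rate $n/\beta$, i.e. the rate that matches the admissibility relation $\beta/q=n(1/2-1/r)$ in the $TT^*$ computation. Unlike the case $\beta=2$, the symbol $|\xi|^\beta$ fails to be smooth at the origin, and a direct stationary-phase analysis of the non-radial kernel produces a loss of derivatives; this is precisely the Knapp-type obstruction exhibited in \cite{guo2014improved}. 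Restricting to radial $f$ removes it, and the frequency-localized radial dispersive bound of \cite{guo2014improved} is exactly what permits the full, loss-free range of admissible pairs. I would therefore cite \cite[Corollary 3.10]{guo2014improved} for this bound rather than reprove the oscillatory-integral asymptotics, which is the genuinely hard analytic step and the main obstacle in the whole argument.

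Granting these two inputs, the conclusion is the standard Keel-Tao packaging. The $TT^*$ argument interpolated against the energy bound yields the homogeneous estimate $\norm{U_\beta(t)u_0}_{L^{q_1}(I,L^{r_1})}\lesssim \norm{u_0}_{L^2}$ for every $(q_1,r_1)\in\mathcal A_\beta$, and the abstract theorem simultaneously delivers the retarded estimate $\norm{\int_0^t U_\beta(t-s)F(s)\,ds}_{L^{q_1}(I,L^{r_1})}\lesssim \norm{F}_{L^{q_2'}(I,L^{r_2'})}$ for all admissible $(q_1,r_1),(q_2,r_2)$, the truncation to the domain $s<t$ being built into Keel-Tao's inhomogeneous statement (and, for non-endpoint pairs, obtainable from the Christ-Kiselev lemma). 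Adding the two bounds for $DF=U_\beta(t)u_0+i\int_0^t U_\beta(t-s)F(s)\,ds$ gives the stated inequality. Everything after the radial dispersive estimate is duality and interpolation, and the admissibility relation is just the scaling forced by matching the decay rate $n/\beta$ to the $TT^*$ computation; the role of the radial hypothesis, restricting both $u_0$ and the forcing $F$ to radial functions, is solely to license the loss-free dispersive bound.
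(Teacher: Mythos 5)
The paper does not prove Proposition \ref{fst} at all: both parts are stated as quotations, part \eqref{fst1} from Keel--Tao \cite{KeelTao1998} and part \eqref{fst2} from \cite[Corollary 3.10]{guo2014improved}. Since your proposal ultimately rests on exactly these two citations, its final conclusion matches the paper's treatment. Your account of part \eqref{fst1} is accurate and standard.

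However, the mechanism you sketch for part \eqref{fst2} is genuinely wrong, and you should not present it as the route to the result. The Keel--Tao machinery requires an untruncated dispersive bound $\|U_\beta(t)f\|_{L^\infty}\lesssim |t|^{-n/\beta}\|f\|_{L^1}$, and no such estimate exists for $\beta\in\left(\frac{2n}{2n-1},2\right)$ --- not even for radial data. Indeed, the free kernel $K_t=\mathcal{F}^{-1}\big(e^{-(2\pi)^\beta it|\xi|^\beta}\big)$ is the evolution of a radial approximate delta, and stationary phase at $|\xi|\sim (|x|/t)^{1/(\beta-1)}$ shows $|K_t(x)|$ \emph{grows} like a positive power of $|x|$ when $1<\beta<2$, so the $L^1\to L^\infty$ bound fails outright; radial symmetry cannot repair this, since the delta at the origin is radial. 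The Knapp example you invoke concerns the loss of derivatives in non-radial \emph{Strichartz} estimates; the failure of the dispersive estimate is a more basic obstruction that rules out your ``two inputs plus $TT^*$'' packaging altogether. What \cite{guo2014improved} actually prove are improved \emph{frequency-localized, $L^2$-based} space-time estimates for radial data via Bessel-function asymptotics, which are then summed over dyadic frequencies --- a different argument from Keel--Tao applied to a dispersive bound. Relatedly, \cite[Corollary 3.10]{guo2014improved} is not a ``dispersive bound'' to be post-processed: it is already the full homogeneous-plus-retarded Strichartz statement for all pairs in $\mathcal{A}_\beta$, so no further duality or Christ--Kiselev step is needed (and Christ--Kiselev would in any case be unavailable at the double endpoint $q_1=q_2'=2$, which $\mathcal{A}_\beta$ permits). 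The fix is simply to present part \eqref{fst2} as the paper does: quote Corollary 3.10 as the Strichartz estimate itself, and drop the dispersive-estimate derivation.
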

 
\begin{lem}[see e.g. Lemma 3.9 in {\cite{leonidthesis}} ] \label{eg} Denote  \begin{equation*} G(u,v,w)=|u+v|^{\alpha}(u+v)-|u+w|^{\alpha}(u+w)
\end{equation*}
Then, for  $\alpha>0$ and $u,v,w\in \C$, we have 
     $$|G(u,v,w)| \lesssim_{\alpha} (|u|^{\alpha}+|v|^{\alpha}+|w|^{\alpha})|v-w|.
    $$
  \end{lem}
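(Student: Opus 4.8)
The plan is to view the map $f(z)=\abs{z}^{\alpha}z$ as a function $f\colon\R^{2}\to\R^{2}$ (identifying $\C\cong\R^{2}$) and to exploit the trivial identity $G(u,v,w)=f(u+v)-f(u+w)$, which exhibits the left-hand side as the difference of the values of a \emph{single} function at the two points $u+v$ and $u+w$. I would then estimate this difference by the fundamental theorem of calculus along the segment joining the two points, reducing everything to a pointwise bound on the Jacobian of $f$.

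First I would record the Jacobian estimate $\abs{Df(z)}\lesssim_{\alpha}\abs{z}^{\alpha}$. Writing $z=x+iy$ and $\abs{z}=(x^{2}+y^{2})^{1/2}$, a direct differentiation of the components $(x^{2}+y^{2})^{\alpha/2}x$ and $(x^{2}+y^{2})^{\alpha/2}y$ produces only terms of the form $\abs{z}^{\alpha}$ and $\alpha\,(\text{quadratic in }x,y)\,\abs{z}^{\alpha-2}$; since each quadratic term is at most $\abs{z}^{2}$, every partial derivative is bounded by $(\alpha+1)\abs{z}^{\alpha}$. Hence $\abs{Df(z)}\lesssim_{\alpha}\abs{z}^{\alpha}$ for $z\neq0$, and because $\alpha>0$ this also forces $Df(0)=0$; in particular $f$ is locally Lipschitz, which is all the regularity the fundamental theorem of calculus step will require.

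Next, parametrising the segment from $u+w$ to $u+v$ by $z(t)=u+w+t(v-w)$, $t\in[0,1]$, I would write
$$G(u,v,w)=f(u+v)-f(u+w)=\int_{0}^{1}Df\big(z(t)\big)\,(v-w)\,dt,$$
whence
$$\abs{G(u,v,w)}\le\abs{v-w}\int_{0}^{1}\abs{Df\big(z(t)\big)}\,dt\lesssim_{\alpha}\abs{v-w}\int_{0}^{1}\abs{z(t)}^{\alpha}\,dt.$$
On the segment one has $\abs{z(t)}=\abs{u+(1-t)w+tv}\le\abs{u}+\abs{v}+\abs{w}$ uniformly in $t\in[0,1]$, so the integral is dominated by $(\abs{u}+\abs{v}+\abs{w})^{\alpha}$. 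Finally I would invoke the elementary inequality $(a+b+c)^{\alpha}\lesssim_{\alpha}a^{\alpha}+b^{\alpha}+c^{\alpha}$ for $a,b,c\ge0$ (subadditivity of $t\mapsto t^{\alpha}$ when $0<\alpha\le1$, and convexity yielding the constant $3^{\alpha-1}$ when $\alpha>1$) to conclude $\abs{G(u,v,w)}\lesssim_{\alpha}(\abs{u}^{\alpha}+\abs{v}^{\alpha}+\abs{w}^{\alpha})\abs{v-w}$, as claimed.

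The only genuinely delicate point is the low-regularity range $0<\alpha<1$, where $f$ fails to be $C^{1}$ at the origin, so one must be sure the calculus step is legitimate when the segment passes through or near $z=0$. This is handled by the remark above: the bound $\abs{Df(z)}\lesssim_{\alpha}\abs{z}^{\alpha}$ together with $Df(0)=0$ shows $f$ is Lipschitz on every ball (with constant $\lesssim_{\alpha}R^{\alpha}$ on $B_{R}$), so $f$ is absolutely continuous along lines and the fundamental theorem of calculus applies without any separate case analysis at the origin.
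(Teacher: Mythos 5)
Your proof is correct and follows essentially the same route as the argument behind the paper's citation (the paper gives no proof of this lemma, deferring to Lemma 3.9 of \cite{leonidthesis}, which likewise proceeds via a mean-value/fundamental-theorem-of-calculus estimate for $f(z)=|z|^{\alpha}z$ along the segment joining $u+w$ and $u+v$, using $|Df(z)|\lesssim_{\alpha}|z|^{\alpha}$ and the power inequality $(a+b+c)^{\alpha}\lesssim_{\alpha}a^{\alpha}+b^{\alpha}+c^{\alpha}$). One harmless inaccuracy: for $0<\alpha<1$ the map $f(z)=|z|^{\alpha}z$ is in fact $C^{1}$ at the origin as well, since all its partial derivatives are bounded by $(1+\alpha)|z|^{\alpha}$ and hence extend continuously by $0$ there, so your extra absolute-continuity safeguard, while valid, is not needed.
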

 \begin{lem}[Hardy-Littlewood Sobolev inequality]\label{HLS} Assume that $0<\nu <n$ and $1<p<q<\infty$ with $$\frac{1}{p}+\frac{\nu}{n}-1=\frac{1}{q}.$$
Then the map $f \to |\cdot|^{-\nu} \ast f$ is bounded from $L^p(\R^n )$ to $L^q(\R^n )$ as
$$ \||\cdot|^{-\nu} \ast f\|_{L^q} \lesssim_{n,\nu,p} \|f\|_{L^p}.$$
\end{lem}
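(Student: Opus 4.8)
The plan is to reduce the claim to the boundedness of the Hardy--Littlewood maximal operator $M$ on $L^p(\R^n)$, exploiting the fact that the kernel $K(x)=\abs{x}^{-\nu}$ just fails to lie in $L^{n/\nu}(\R^n)$ (it belongs only to the weak space $L^{n/\nu,\infty}$), which is precisely why Young's convolution inequality is unavailable and the strict restriction $1<p<q<\infty$ is forced. The central device is Hedberg's pointwise estimate, obtained by splitting the kernel at a radius $R>0$ that I will optimize at the end.

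First I would split, for fixed $x$ and a parameter $R>0$,
$$ \abs{(K\ast f)(x)} \le \int_{\abs{y}\le R}\abs{y}^{-\nu}\abs{f(x-y)}\,dy + \int_{\abs{y}> R}\abs{y}^{-\nu}\abs{f(x-y)}\,dy =: I_1(x)+I_2(x). $$
For $I_1$ I would decompose the ball $\set{\abs{y}\le R}$ into dyadic annuli $\abs{y}\sim 2^{-j}R$; on each annulus $\abs{y}^{-\nu}\lesssim (2^{-j}R)^{-\nu}$, while the average of $\abs{f}$ over the corresponding ball is controlled by $Mf(x)$. Summing the resulting geometric series (which converges because $n-\nu>0$) gives $I_1(x)\lesssim R^{n-\nu}Mf(x)$. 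For $I_2$ I would apply H\"older's inequality with exponents $p$ and $p'$; the tail integral $\int_{\abs{y}>R}\abs{y}^{-\nu p'}\,dy$ converges exactly because $\nu p'>n$ --- a condition equivalent to $\tfrac1q=\tfrac1p+\tfrac{\nu}{n}-1>0$, i.e. to $q<\infty$ --- and evaluates to a constant times $R^{\,n/p'-\nu}$, yielding $I_2(x)\lesssim R^{\,n/p'-\nu}\norm{f}_{L^p}$.

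Next I would optimize in $R$. Since $n-\nu>0$ and $n/p'-\nu<0$, balancing the two contributions at $R\sim (\norm{f}_{L^p}/Mf(x))^{p/n}$ produces the pointwise bound
$$ \abs{(K\ast f)(x)} \lesssim \big(Mf(x)\big)^{p/q}\,\norm{f}_{L^p}^{1-p/q}, $$
where the exponent $p/q$ arises from the algebraic identity $1-\tfrac{p(n-\nu)}{n}=\tfrac pq$, itself merely a rewriting of the scaling hypothesis. Raising to the $q$-th power, integrating in $x$, and using $q\cdot\tfrac pq=p$ gives $\norm{K\ast f}_{L^q}^{q} \lesssim \norm{f}_{L^p}^{q-p}\,\norm{Mf}_{L^p}^{p}$.

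Finally I would invoke the Hardy--Littlewood maximal theorem, $\norm{Mf}_{L^p}\lesssim_{n,p}\norm{f}_{L^p}$ for $p>1$, to conclude $\norm{K\ast f}_{L^q}\lesssim_{n,\nu,p}\norm{f}_{L^p}$. The main obstacle is genuinely the borderline integrability of the kernel: because $\abs{x}^{-\nu}$ lies only in weak $L^{n/\nu}$, no direct convolution inequality applies, and the whole weight of the argument rests on the maximal inequality --- which is exactly where the strict bound $p>1$ (and the well-known failure at $p=1$) enters. An alternative route would be to prove that convolution with $K$ is of weak type at the endpoints and then apply the Marcinkiewicz interpolation theorem; I would nonetheless prefer the Hedberg argument above, since it is self-contained modulo the maximal theorem and makes the role of each hypothesis ($\nu<n$, $q<\infty$, $p>1$) transparent.
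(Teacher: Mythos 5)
Your proof is correct. There is, however, nothing in the paper to compare it against: the paper states Lemma \ref{HLS} as a classical fact with no proof (it is the standard Hardy--Littlewood--Sobolev inequality, found e.g.\ in Lieb--Loss or Stein), so the relevant question is only whether your blind argument is sound, and it is. Your route is the Hedberg pointwise estimate, and each step checks out: the dyadic summation for $I_1$ converges precisely because $\nu<n$; the tail estimate for $I_2$ requires $\nu p'>n$, which, as you correctly note, is equivalent under the scaling relation to $1/q>0$; the optimization in $R$ yields the exponent $1-\frac{p(n-\nu)}{n}=\frac{p}{q}$, which is the right algebra; and the concluding appeal to the maximal theorem is exactly where $p>1$ enters. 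Two small points worth making explicit in a polished write-up: the balancing choice of $R$ is meaningful only at points where $0<Mf(x)<\infty$, which holds a.e.\ for $f\in L^p$, $f\not\equiv 0$ (by the maximal theorem, with the case $f\equiv 0$ trivial); and since the kernel is nonnegative one may work throughout with $|f|$, so $|(K\ast f)(x)|\le (K\ast|f|)(x)$ costs nothing. Your closing remark is also accurate: the alternative proof via weak-type endpoint bounds for convolution with the weak-$L^{n/\nu}$ kernel plus Marcinkiewicz interpolation is the other standard route, and your preference for Hedberg is reasonable since it isolates the role of each hypothesis.
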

Consider the identity for $\nu>0:$
\begin{equation}\label{haridentity}
(|\cdot|^{-\nu} \ast |u_1|^{2})u_1 - (|\cdot|^{-\nu} \ast |u_2|^{2})u_2 = (|\cdot|^{-\nu} \ast |u_1|^{2})(u_1 - u_2) + (|\cdot|^{-\nu} \ast (|u_1|^2 - |u_2|^2))u_2.
\end{equation}
Define 
\begin{equation}\label{TildeG}
    \tilde{G}(v,w_{1},w_{2})=(|\cdot|^{-\nu}\ast|v+w_{1}|^{2})(v+w_{1})-(|\cdot|^{-\nu}\ast|v+w_{2}|^{2})(v+w_{2})
\end{equation}
\begin{lem}[Basic property, \cite{KassoBook,wang2011harmonic, kobayashi2011inclusion}] \label{srp}
    Let $p,q,p_{i},q_{i} \in [1,\infty], (i=0,1,2)$ and $s_1, s_2 \in \R.$ 
  \begin{enumerate}
     \item $ \label{srp1}  M_{s_1}^{p_{1},q_{1}}\hookrightarrow M_{s_2}^{p_{2},q_{2}}$ whenever $p_{1}\leq p_{2}, q_{1}\leq q_{2},  s_2 \leq s_1.$
    \item \label{srp2} $  M_{s_1}^{p,q_{1}}\hookrightarrow M_{s_2}^{p,q_{2}}$ whenever $ q_{2}<q_{1},  s_1-s_2 > \frac{n}{q_2}-\frac{n}{q_1}$. 
            \item\label{embedding} $M^{p,q_1}\hookrightarrow L^{p} \hookrightarrow  M^{p,q_2} $ for $q_1 \leq \min \{p, p'\}$ and $q_2 \geq \max \{p, p'\}.$
            
\item \label{srpa}If $\frac{1}{p_{1}}+\frac{1}{p_{2}}=\frac{1}{p_{0}}$ and $\frac{1}{q_{1}}+\frac{1}{q_{2}}=1+\frac{1}{q_{0}},$ then $\|f g\|_ {M^{p_{0},q_{0}}} \lesssim  \|f\|_ {M^{p_{1},q_{1}}} \|g\|_ {M^{p_{2},q_{2}}}.$
     \item Denote 
$$\tau (p,q)= \max \left\{ 0, n\left( \frac{1}{q}- \frac{1}{p}\right), n\left( \frac{1}{q}+ \frac{1}{p}-1\right) \right\}.$$Then 
$L^{p}_{s_1} \subset M^{p,q}_{s_2}$ if and only if one of the following conditions is satisfied:
\begin{gather*}
 (i) \  q\geq p>1, s_1\geq s_2 + \tau(p,q); \quad (ii) \ p>q, s_1>s_2+ \tau(p,q);\\
(iii) \ p=1, q=\infty, s_1\geq s_2 + \tau(1, \infty); \quad (iv) \ p=1, q\neq \infty, s_1>s_2+\tau (1, q).
\end{gather*}
\item For completely sharp embedding between $M^{p,q}_{s_1}$ and $H^s,$ see  \cite{KassoBook}.
    \end{enumerate}
    \end{lem}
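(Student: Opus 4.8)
The plan is to treat Lemma~\ref{srp} as an assembly of standard time-frequency facts, unified by a single principle: each block $\square_k f$ has Fourier support in a cube of side $O(1)$ centered at $k$, so Bernstein-type estimates hold uniformly in $k$, while the $\ell^q$-index is governed purely by discrete sequence-space inequalities on $\Z^n$ and the weight by the elementary monotonicity of $(1+|k|)^{s}$. Parts (1), (2) and (4) I would prove directly by this mechanism; the sharp inclusions (5) and the completely sharp comparison (6) I would state with reference to the cited literature.

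First I would handle (1): since $\widehat{\square_k f}$ is supported in a fixed unit cube translated by $k$, Bernstein's inequality gives $\|\square_k f\|_{L^{p_2}} \lesssim \|\square_k f\|_{L^{p_1}}$ for $p_1 \le p_2$ with a constant independent of $k$; combining this with $\ell^{q_1} \hookrightarrow \ell^{q_2}$ for $q_1 \le q_2$ and $(1+|k|)^{s_2} \le (1+|k|)^{s_1}$ for $s_2 \le s_1$ yields the claim. For (2), where $p$ is fixed, only the weighted $\ell^q$ norms of $a_k := \|\square_k f\|_{L^p}$ matter. Writing $\tfrac{1}{q_2} = \tfrac{1}{q_1} + \tfrac{1}{r}$ and factoring $a_k(1+|k|)^{s_2} = \big[a_k(1+|k|)^{s_1}\big]\,(1+|k|)^{s_2-s_1}$, Hölder's inequality in $k$ reduces the claim to the summability $(1+|k|)^{-(s_1-s_2)} \in \ell^r(\Z^n)$, which holds exactly when $r(s_1-s_2) > n$, i.e. $s_1 - s_2 > n/r = n(1/q_2 - 1/q_1)$, matching the hypothesis.

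For (4) the decisive observation is that $\operatorname{supp}\widehat{(\square_{k_1}f)(\square_{k_2}g)}$ lies in a cube of size $O(1)$ about $k_1+k_2$, so $\square_k(fg)$ receives contributions only from the finitely many near-diagonal pairs with $|k-k_1-k_2| \lesssim 1$. Estimating each product by Hölder in $x$ with $1/p_1+1/p_2 = 1/p_0$, and then summing in $k$ by Young's convolution inequality on $\Z^n$ with $1/q_1+1/q_2 = 1+1/q_0$, produces the stated bound. The embeddings (3) I would then obtain either from (4) together with $M^{2,2}=L^2$ and interpolation, or directly from the block-wise Hausdorff--Young inequality combined with Minkowski's inequality in the mixed norm.

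The sharp statements (5) and (6) are where the real work lives: the if-and-only-if characterization of $L^p_{s_1} \subset M^{p,q}_{s_2}$ through the threshold $\tau(p,q)$, and the completely sharp comparison with $H^s$, require pinning down the exact regularity exponents, including both sufficiency at the borderline and necessity. The elementary Bernstein--H\"older--Young arguments above cost essentially nothing and immediately deliver non-sharp sufficient conditions, so the qualitative inclusions (1)--(4) are routine; the main obstacle is precisely this sharpness, and for it I would invoke the refined frequency analysis of \cite{kobayashi2011inclusion, KassoBook} verbatim rather than reprove it.
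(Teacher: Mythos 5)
The paper offers no internal proof of this lemma: it is stated as a compendium of known facts with pointers to \cite{KassoBook,wang2011harmonic,kobayashi2011inclusion}, so the honest comparison is between your blind proposal and the standard proofs in those references. On that score your proposal is correct and is essentially the canonical argument. For (1), Bernstein on the unit-cube frequency support of $\square_k f$ (uniform in $k$ after modulating to a fixed cube), together with $\ell^{q_1}\hookrightarrow\ell^{q_2}$ and monotonicity of $(1+|k|)^{s}$, is exactly right; for (2), H\"older in $k$ with $\frac{1}{q_2}=\frac{1}{q_1}+\frac{1}{r}$ reduces matters to $(1+|k|)^{-(s_1-s_2)}\in\ell^r(\Z^n)$, i.e.\ $r(s_1-s_2)>n$, which reproduces the stated threshold $s_1-s_2>\frac{n}{q_2}-\frac{n}{q_1}$; for (4), the near-diagonal support observation that $\widehat{(\square_{k_1}f)(\square_{k_2}g)}$ lives within $O(1)$ of $k_1+k_2$, followed by H\"older in $x$ and Young's convolution inequality on $\Z^n$, is the standard route and correctly matches the exponent relations $\frac{1}{p_1}+\frac{1}{p_2}=\frac{1}{p_0}$ and $\frac{1}{q_1}+\frac{1}{q_2}=1+\frac{1}{q_0}$. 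Deferring the sharp statements (5) and (6) to the cited literature is also the right call --- the if-and-only-if characterization via $\tau(p,q)$ is the main theorem of \cite{kobayashi2011inclusion} and is genuinely hard, and the paper itself does nothing more than cite it.

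One small caution on part (3): the product estimate (4) does not by itself produce the embeddings, and the endpoint exponents $q_1=\min\{p,p'\}$, $q_2=\max\{p,p'\}$ are not reachable by the crude triangle inequality over blocks (which only gives $M^{p,1}\hookrightarrow L^p$). The clean argument is the one you gesture at with interpolation: combine $M^{1,1}\hookrightarrow L^1$ and $M^{2,2}=L^2$ (Plancherel) to get $M^{p,p}\hookrightarrow L^p$ for $1\le p\le 2$, and $M^{\infty,1}\hookrightarrow L^\infty$ with $L^2$ to get $M^{p,p'}\hookrightarrow L^p$ for $2\le p\le\infty$, using that modulation spaces form a complex interpolation scale; the right-hand inclusion $L^p\hookrightarrow M^{p,q_2}$ then follows by duality. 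Since you explicitly invoke interpolation (and a blockwise Hausdorff--Young alternative), this is a presentational looseness rather than a gap, but in a written version you should drop the suggestion that (3) follows from (4) and spell out the endpoint-plus-interpolation-plus-duality chain.
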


\begin{prop}[Boundedness of $U_{\beta}(t),$ \cite{chendcds,wango}]\label{uf}
The following inequalities hold:
 \begin{enumerate}
 \item For $\frac{1}{2} < \beta \leq 2$ and $ 1 \leq p,q\leq\infty,$ we have 
 $$ \|U_{\beta}(t)f \|_{M^{p,q}}\leq  (1+|t|)^{n\left| \frac{1}{p}-\frac{1}{2} \right|} \|f\|_{M^{p,q}}.$$
 \item For $\beta \geq 2 , 1\leq  q\leq \infty$ and $ 2 \leq  p\leq \infty,$ we have $$ \|U_{\beta}(t)f \|_{M^{p,q}}\leq  (1+|t|)^{- \frac{2n}{\beta}\left( \frac{1}{2}-\frac{1}{p} \right)} \|f\|_{M^{p',q}}.$$ 
 \end{enumerate} 
\end{prop}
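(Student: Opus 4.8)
The plan is to exploit that $U_\beta(t)$ is a Fourier multiplier, so it commutes with every frequency-uniform projection: $\square_k U_\beta(t) f = U_\beta(t)\square_k f$. Consequently
$$\|U_\beta(t) f\|_{M^{p,q}} = \big\| \,\|U_\beta(t)\square_k f\|_{L^p}\, \big\|_{\ell^q_k},$$
and it suffices to prove a bound on $\|U_\beta(t)\square_k f\|_{L^p}$ that is uniform in $k \in \Z^n$; taking the $\ell^q_k$-norm then transfers the estimate to the modulation norm with the same constant. Choosing a smooth cutoff $\tilde\sigma_k$ equal to $1$ on the support of $\sigma_k$ and supported in a fixed dilate of the cube $k + [-1,1]^n$, I would write $U_\beta(t)\square_k f = K_{k,t} * \square_k f$ with kernel $K_{k,t} = \mathcal{F}^{-1}\big[\tilde\sigma_k(\xi)\, e^{-(2\pi)^\beta i t |\xi|^\beta}\big]$, so that everything reduces to estimating $K_{k,t}$ in a single norm, uniformly in $k$.

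For part (1) I would argue by interpolation between two endpoints. At $p=2$ the symbol is unimodular, hence $U_\beta(t)$ is an $L^2$-isometry and the bound holds with constant $1$ (exponent $0$). At $p=1$, Young's inequality gives $\|U_\beta(t)\square_k f\|_{L^1} \le \|K_{k,t}\|_{L^1}\,\|\square_k f\|_{L^1}$, reducing matters to the kernel bound $\|K_{k,t}\|_{L^1} \lesssim (1+|t|)^{n/2}$. Riesz--Thorin interpolation (and duality for $p\ge 2$) then produces the exponent $n\left|\tfrac{1}{p}-\tfrac12\right|$, and the $\ell^q_k$-summation above closes the argument. For part (2) the same scheme applies with the dual endpoint: $p=2$ is again the isometry (exponent $0$, with $p'=2$), while at $p=\infty$ one uses $\|U_\beta(t)\square_k f\|_{L^\infty} \le \|K_{k,t}\|_{L^\infty}\,\|\square_k f\|_{L^1}$ together with the dispersive bound $\|K_{k,t}\|_{L^\infty} \lesssim (1+|t|)^{-n/\beta}$, uniform in $k$. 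Interpolating this $L^1\to L^\infty$ estimate against the $L^2\to L^2$ isometry yields the stated exponent $-\tfrac{2n}{\beta}\big(\tfrac12-\tfrac1p\big)$ on $L^{p'}\to L^p$, and one again concludes by summing in $\ell^q_k$ (now landing in $M^{p',q}$ on the right).

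The main obstacle is the uniform-in-$k$ kernel estimate, and this is exactly where the hypotheses on $\beta$ enter. For $|t|\le 1$ the kernels are controlled by a fixed constant, since $\tilde\sigma_k$ has uniformly bounded derivatives. For $|t|>1$ I would Taylor-expand the phase $t|\xi|^\beta$ about $\xi=k$: the constant term contributes only a unimodular factor and the linear term only a translation, neither of which affects $\|K_{k,t}\|_{L^1}$ or $\|K_{k,t}\|_{L^\infty}$, so only the quadratic and higher part matters. When $\tfrac12<\beta\le 2$ the Hessian of $|\xi|^\beta$ has size $\lesssim |k|^{\beta-2}\lesssim 1$ for $|k|\ge 1$, so the oscillation across each unit cube is governed by an effective parameter of order $|t|$, and a stationary-phase / weighted-$L^2$ argument gives $\|K_{k,t}\|_{L^1}\lesssim |t|^{n/2}$ uniformly in $k$. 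When $\beta\ge 2$ the growing curvature instead improves decay, and van der Corput (or the scaling of the non-localized propagator) delivers $\|K_{k,t}\|_{L^\infty}\lesssim |t|^{-n/\beta}$. The genuinely delicate point in both cases is the cube $k=0$, where $|\xi|^\beta$ fails to be smooth at the origin; estimating this singular contribution by hand, rather than by naive differentiation of the symbol, is precisely what forces the restriction $\beta>\tfrac12$ in part (1).
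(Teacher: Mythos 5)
This proposition is imported by the paper without proof (it is quoted from \cite{chendcds,wango}), and your sketch reconstructs essentially the argument of those references: commute $\square_k$ with the multiplier, prove kernel bounds uniform in $k$ --- $\|K_{k,t}\|_{L^1}\lesssim(1+|t|)^{n/2}$ after discarding the constant and linear part of the phase at $\xi=k$ (a modulation and a translation), and $\|K_{k,t}\|_{L^\infty}\lesssim(1+|t|)^{-n/\beta}$ via the unlocalized dispersive kernel --- then apply Riesz--Thorin at the block level against the $L^2$ isometry and sum in $\ell^q_k$, exactly as in the cited proofs. The only soft spot is your closing attribution of the threshold $\beta>\tfrac12$ specifically to the $k=0$ cube, which is speculative (a dyadic decomposition of the central cube in fact yields the $(1+|t|)^{n/2}$ bound for a wider range), but since $\beta>\tfrac12$ is a standing hypothesis this does not affect the correctness of the argument.
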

By invoking interpolation \cite[Theorem 6.1 (D)]{Feih83} theory, we may split the initial data, with the desire  property.  More specifically, we have the following lemma, which will play a crucial role to prove our main results.
\begin{lem}[see Proposition 5.1 in {\cite{leonidthesis}}]\label{ipt} Let $r>2$ and $ \ p\in (2, r).$ Then 
 there exists a constant $C=C(n,r,p)$ such that  for  any $u\in M^{p,p'}$ and  $N>0$  there are 
$\ v \in L^{2}$ and $w \in M^{r,r'}$ satisfying
\begin{gather*}
\begin{cases}
u= v + w\\
 \|v\|_{L^{2}}\leq C \|u\|
        _{M^{p,p'} }N^{\gamma}\\ 
        \|w\|_{M^{r,r'}}\leq C \|u\|_{M^{p,p'}}/N 
\end{cases}, \ \ where  \   \gamma = \frac{\frac{1}{2} - \frac{1}{p}}{\frac{1}{p} - \frac{1}{r}}.
\end{gather*} \end{lem}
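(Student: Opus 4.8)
The plan is to deduce the decomposition from the interpolation description of $M^{p,p'}$ together with the elementary control on the $K$-functional that complex interpolation automatically provides, rather than trying to build $v$ and $w$ by hand from the frequency pieces $\square_k u$ (the latter is awkward because $L^2$ and $L^r$ norms of band-limited pieces are genuinely inequivalent, so a naive level-set or cutoff split does not respect both target spaces at once).

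First I would record the interpolation identity. Writing $L^2=M^{2,2}$ (equivalent norms), Feichtinger's interpolation theorem \cite[Theorem 6.1 (D)]{Feih83} gives the complex interpolation identity $[L^2,M^{r,r'}]_\theta=M^{p,p'}$, where $\theta\in(0,1)$ is the unique number with $\tfrac1p=\tfrac{1-\theta}{2}+\tfrac{\theta}{r}$. One checks simultaneously that $\tfrac{1}{p'}=\tfrac{1-\theta}{2}+\tfrac{\theta}{r'}$, so the second exponent is consistent and no extra constraint arises. Solving for $\theta$ yields $\theta=\frac{\frac12-\frac1p}{\frac12-\frac1r}$, and since $2<p<r$ one has $0<\theta<1$.

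Next I would pass to the real-interpolation functional. For the couple $(A_0,A_1)=(L^2,M^{r,r'})$ define $K(t,u)=\inf_{u=v+w}\big(\|v\|_{L^2}+t\,\|w\|_{M^{r,r'}}\big)$. Using the standard embedding of a complex interpolation space into the real one with third index $\infty$, namely $[A_0,A_1]_\theta\hookrightarrow (A_0,A_1)_{\theta,\infty}$, together with the identity of the previous step, I obtain a constant $C=C(n,r,p)$ with
\[
K(t,u)\le C\,t^{\theta}\,\|u\|_{M^{p,p'}}\qquad(t>0).
\]
Finally I would extract the decomposition: given $N>0$, set $t=N^{1/(1-\theta)}$, and use the definition of the infimum to choose $u=v+w$ with $\|v\|_{L^2}+t\,\|w\|_{M^{r,r'}}\le 2K(t,u)\le 2C\,t^{\theta}\|u\|_{M^{p,p'}}$. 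This gives $\|v\|_{L^2}\le 2C\,N^{\theta/(1-\theta)}\|u\|_{M^{p,p'}}$ and $\|w\|_{M^{r,r'}}\le 2C\,t^{\theta-1}\|u\|_{M^{p,p'}}=2C\,N^{-1}\|u\|_{M^{p,p'}}$, and a direct computation gives $\frac{\theta}{1-\theta}=\frac{\frac12-\frac1p}{\frac1p-\frac1r}=\gamma$, matching the claimed exponent.

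The hard part is Step 1: the off-diagonal complex interpolation identity $[M^{2,2},M^{r,r'}]_\theta=M^{p,p'}$ with $p\neq p'$. This rests on the fact that modulation spaces are retracts of the mixed-norm sequence spaces $\ell^{q}(L^p)$ through the coefficient and synthesis maps attached to the uniform decomposition $\{\square_k\}$, combined with complex interpolation of vector-valued $\ell^{q}(L^p)$ spaces; the retract structure transports the interpolation identity from the sequence-space level up to $M^{p,p'}$, and everything downstream (Steps 2--3) is soft. Finally, if one needs $v,w$ radial when $u$ is radial, as in the applications, I would symmetrize: averaging the decomposition over $O(n)$ leaves $u$ unchanged and, since rotations induce uniformly equivalent modulation norms, does not increase either of the two norms beyond a harmless constant.
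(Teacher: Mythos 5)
Your proof is correct and follows essentially the same route as the source the paper cites for this lemma (Proposition 5.1 in \cite{leonidthesis}, invoked in the paper via Feichtinger's interpolation theorem \cite[Theorem 6.1 (D)]{Feih83}): the complex interpolation identity $[M^{2,2},M^{r,r'}]_{\theta}=M^{p,p'}$, the embedding into the real interpolation space $(\cdot,\cdot)_{\theta,\infty}$, and the resulting $K$-functional bound $K(t,u)\lesssim t^{\theta}\|u\|_{M^{p,p'}}$ with $t=N^{1/(1-\theta)}$ and $\theta/(1-\theta)=\gamma$. Your closing $O(n)$-symmetrization remark is a welcome addition, since the paper applies the lemma to radial data without comment on why $v$ and $w$ can be taken radial.
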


\section{Global well-posedness in $L^{2}_{rad}$}\label{s3}
In this section, we recall \cite[Corollary 4.8]{guo2014improved} in Theorem \ref{GWPL2Rad} below for the sake of completeness. This result will be crucial in establishing local and global well-posedness in modulation spaces. \\
Denote
\begin{align}
\label{exponent1}
    \omega&:=1-\frac{n\alpha}{2\beta},\\
\label{kappa}
     \kappa&:=\frac{n\alpha}{2\beta(\alpha+2)},\\
\label{YT}
 Y(T) &:= L^{1 / \kappa}_{T}L^{\alpha+2}_{rad}.
\end{align}
\begin{lem}\label{lemlwp}Let $n \geq 2, \alpha \in (0,\frac{2\beta}{n}), \beta \in (\frac{2n}{2n-1}, 2)$ and $(q_{1},r_{1})\in \mathcal{A}_{\beta}$. Then, there exists a constant $C=C(n,\alpha,\beta,r_{1})>0$
such that for any $T > 0$  
and any $u,v,w \in Y(T)$, the estimate
\begin{align*}
   \hspace{-0.5cm}\left|\left| \int_0^t U_\beta(t-s)G(u,v,w)(s) ds \right|\right|_{L^{q_{1}}_{T}L^{r_{1}}} \lesssim_{n,\alpha,\beta,r_{1}} T^{\omega}\|v-w\|_{Y(T)}\left(\|u\|_{Y(T)}^{\alpha}
     +\|v\|_{Y(T)}^{\alpha} + \|w\|_{Y(T)}^{\alpha}\right)
\end{align*}
holds.
\end{lem}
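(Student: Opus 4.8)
The plan is to combine the inhomogeneous (dual) radial Strichartz estimate with the pointwise nonlinear bound of Lemma \ref{eg}, and then to balance the space-time integrabilities by Hölder's inequality. Since $u,v,w\in Y(T)=L^{1/\kappa}_{T}L^{\alpha+2}_{rad}$ are radial in $x$ for a.e.\ $t$, the difference $G(u,v,w)$ is radial, so the radial estimate of Proposition~\ref{fst}(\ref{fst2}) is applicable. Taking $u_{0}=0$ there, with output pair $(q_{1},r_{1})\in\mathcal{A}_{\beta}$ and an input pair $(q_{2},r_{2})\in\mathcal{A}_{\beta}$ to be chosen, I would first reduce to
$$
\left\| \int_0^t U_\beta(t-s)G(u,v,w)(s)\,ds \right\|_{L^{q_{1}}_{T}L^{r_{1}}}
\lesssim_{n,\beta,r_{1},r_{2}} \|G(u,v,w)\|_{L^{q_{2}'}_{T}L^{r_{2}'}}.
$$

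Next I would insert the pointwise estimate $|G(u,v,w)|\lesssim_{\alpha}(|u|^{\alpha}+|v|^{\alpha}+|w|^{\alpha})|v-w|$ from Lemma~\ref{eg} and apply Hölder in the space variable. The natural choice is $r_{2}=\alpha+2$, so that $r_{2}'=\frac{\alpha+2}{\alpha+1}$ and Hölder with exponents $\frac{\alpha+2}{\alpha}$ and $\alpha+2$ gives $\||u|^{\alpha}|v-w|\|_{L^{r_{2}'}_{x}}\leq \|u\|^{\alpha}_{L^{\alpha+2}_{x}}\|v-w\|_{L^{\alpha+2}_{x}}$, and likewise for the $v,w$ terms. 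The admissibility relation $\frac{\beta}{q_{2}}=n\big(\frac12-\frac{1}{\alpha+2}\big)=\frac{n\alpha}{2(\alpha+2)}$ then forces $q_{2}=\frac{2\beta(\alpha+2)}{n\alpha}=1/\kappa$. I would check this is a legitimate pair: $r_{2}=\alpha+2>2$ guarantees $(q_{2},r_{2},n)\neq(\infty,2,2)$, while $q_{2}=1/\kappa\geq 2$ is equivalent to $n\alpha\leq\beta(\alpha+2)$, which holds since $n\alpha<2\beta<\beta(\alpha+2)$.

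Finally I would run Hölder in time on $[0,T]$. As $\|u(t)\|_{L^{\alpha+2}_{x}}\in L^{1/\kappa}_{T}$, the factor $\|u(t)\|^{\alpha}_{L^{\alpha+2}_{x}}$ lies in $L^{1/(\alpha\kappa)}_{T}$, and $\|v-w\|_{L^{\alpha+2}_{x}}\in L^{1/\kappa}_{T}$; their product lies in $L^{a}_{T}$ with $\frac1a=(\alpha+1)\kappa$. Since $q_{2}'=\frac{1}{1-\kappa}$, Hölder on the finite interval gains $T^{1/q_{2}'-1/a}=T^{\,1-(\alpha+2)\kappa}$, and because $(\alpha+2)\kappa=\frac{n\alpha}{2\beta}$ this exponent is exactly $\omega=1-\frac{n\alpha}{2\beta}$. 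The hypothesis $\alpha<\frac{2\beta}{n}$ is precisely what makes $\omega>0$, equivalently $a>q_{2}'$, which legitimizes this time-Hölder embedding and turns the power of $T$ into a genuine gain. The step requiring the most care is this exponent bookkeeping: one must verify simultaneously that the induced input pair $(q_{2},\alpha+2)$ is $\beta$-admissible and that the residual time integrability yields exactly $T^{\omega}$, both hinging on the mass-subcriticality condition $\alpha<2\beta/n$. Assembling the three steps gives the claimed estimate.
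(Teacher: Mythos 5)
Your proposal is correct and is essentially the paper's own proof: the same reduction via the radial Strichartz estimate of Proposition~\ref{fst}\eqref{fst2} with the admissible input pair $(q_2,r_2)=(1/\kappa,\alpha+2)$, followed by Lemma~\ref{eg} and H\"older in space and then in time to produce the factor $T^{\omega}$. Your version is in fact more careful than the paper's, since you explicitly verify the $\beta$-admissibility of $(1/\kappa,\alpha+2)$ and the exponent bookkeeping $1-(\alpha+2)\kappa=\omega>0$, which the paper leaves implicit.
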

\begin{proof}
  Using Proposition \ref{fst}\eqref{fst2}, we have 
  \begin{align*}
   \left|\left| \int_0^t U_\beta(t-s)G(u,v,w)(s) ds \right|\right|_{L^{q_{1}}_{T}L^{r_{1}}} \lesssim_{n,\beta,r_{1},r_{2}}  \left|\left| G(u,v,w) \right|\right|_{L^{q'_{2}}_{T}L^{r'_{2}}} 
\end{align*}
for $(q_j, r_j) \in \mathcal{A}_{\beta}$, $j=1,2.$ 
Applying Lemma \ref{eg} and H\"older's inequality twice with $(q_{2},r_{2})=(1 / \kappa,\alpha+2),$  we have\begin{align*}
    \left|\left| G(u,v,w) \right|\right|_{L^{q'_{2}}_{T}L^{r'_{2}}} 
    \lesssim_{\alpha}&\|u^{\alpha}(v-w)\|_{L^{q'_{2}}_{T}L^{r'_{2}}} + \|v^{\alpha}(v-w)\|_{L^{q'_{2}}_{T}L^{r'_{2}}} + \|w^{\alpha}(v-w)\|_{L^{q'_{2}}_{T}L^{r'_{2}}}
    \end{align*}
    \begin{align*}
    \leq T^{\omega}\|v-w\|_{L^{1 / \kappa}_{T}L^{\alpha+2}}\left( \|u\|_{L^{1 / \kappa}_{T}L^{\alpha+2}}^{\alpha} +\|v\|_{L^{1 / \kappa}_{T}L^{\alpha+2}}^{\alpha}+ \|w\|_{L^{1 / \kappa}_{T}L^{\alpha+2}}^{\alpha}\right).
\end{align*}
\end{proof}

\begin{thm}[Global well-posedness in $L^{2}_{rad}$]\label{GWPL2Rad}Let $n \geq 2,  0 < \alpha < \frac{2\beta}{n}$ and $ \frac{2n}{2n-1} < \beta < 2.$ If $u_{0}\in L^{2}_{rad}$, then \eqref{FNLS} has a unique global solution
$$u\in C(\R,L^2_{rad}) ~\cap~ L^{1 / \kappa}_{loc}(\R,L^{\alpha+2}).$$
Moreover, $$\|u(t)\|_{L^{2}}=\|u_{0}\|_{L^{2}} \; \forall  t\in \R $$
and for all $(q,r) \in \mathcal{A}_{\beta},~u \in L^{q}_{loc}(\R,L^{r}).$
\end{thm}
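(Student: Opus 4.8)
The plan is to run a contraction-mapping argument in the radial Strichartz space and then globalize via conservation of mass, the key structural fact being that the problem is $L^2$-subcritical. Writing \eqref{FNLS} in Duhamel form, I look for a fixed point of
\[
\Phi(u)(t) = U_\beta(t)u_0 + i\int_0^t U_\beta(t-s)F(u)(s)\,ds, \qquad F(u)=\pm|u|^\alpha u,
\]
in a closed ball $B_R=\{u\in Y(T):\|u\|_{Y(T)}\le R\}$ of the space $Y(T)=L^{1/\kappa}_TL^{\alpha+2}_{rad}$ from \eqref{YT}. A preliminary check, using $\kappa$ from \eqref{kappa}, shows that $(1/\kappa,\alpha+2)$ satisfies the admissibility identity $\beta/q=n(\tfrac12-\tfrac1r)$ and that $1/\kappa>2$ (because $\alpha<2\beta/n$ forces $\beta>n\alpha/(\alpha+2)$), so $(1/\kappa,\alpha+2)\in\mathcal{A}_\beta$; this is the pair I feed into both slots of the radial estimate.

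By the homogeneous estimate (Proposition \ref{fst}\eqref{fst2} with $F\equiv0$ and $(q_1,r_1)=(1/\kappa,\alpha+2)$) one has $\|U_\beta(\cdot)u_0\|_{Y(T)}\le C\|u_0\|_{L^2}$. For the nonlinear term I apply Lemma \ref{lemlwp} with the substitution $(u,v,w)=(0,u,0)$, so that $G(0,u,0)=|u|^\alpha u$ is, up to sign, the nonlinearity $F(u)$, giving
\[
\Big\|\int_0^t U_\beta(t-s)F(u)(s)\,ds\Big\|_{Y(T)} \lesssim_{n,\alpha,\beta} T^{\omega}\|u\|_{Y(T)}^{\alpha+1},
\]
with $\omega=1-\tfrac{n\alpha}{2\beta}>0$ by the subcriticality hypothesis. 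Hence $\|\Phi(u)\|_{Y(T)}\le C\|u_0\|_{L^2}+C T^{\omega}R^{\alpha+1}$, and choosing $R=2C\|u_0\|_{L^2}$ and then $T$ so small that $CT^{\omega}R^{\alpha}\le\tfrac12$ makes $\Phi$ a self-map of $B_R$. The contraction property follows from the same lemma with $(u,v,w)=(0,u_1,u_2)$, yielding $\|\Phi(u_1)-\Phi(u_2)\|_{Y(T)}\lesssim T^\omega R^\alpha\|u_1-u_2\|_{Y(T)}$, which also gives uniqueness in $B_R$ (and then, by a standard continuity argument, in the full solution class). The decisive feature is that the resulting lifespan $T$ depends on $u_0$ only through $\|u_0\|_{L^2}$.

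Once the fixed point $u\in Y(T)$ is found, its continuity $u\in C([0,T],L^2_{rad})$ and the bound $\|u\|_{L^\infty_TL^2}\lesssim\|u_0\|_{L^2}$ follow from the strong continuity and $L^2$-unitarity of $U_\beta(t)$ for the linear part and from the dual Strichartz estimate for the Duhamel part (invoking the Christ--Kiselev lemma to pass to the retarded integral when the endpoint pair $(\infty,2)$ is unavailable, i.e.\ when $n=2$). Conservation of mass \eqref{mass} is then obtained in the usual way: pairing the equation with $\bar u$ and taking imaginary parts, self-adjointness of $(-\Delta)^{\beta/2}$ kills the dispersive term and gauge invariance $\mathrm{Im}\int|u|^{\alpha+2}=0$ kills the nonlinear term, so $\tfrac{d}{dt}\|u\|_{L^2}^2=0$; at the $L^2$ regularity level this is justified by regularizing the data, running the computation for the smooth solutions, and passing to the limit using the Lipschitz dependence supplied by the contraction.

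Since the local existence time depends only on $\|u_0\|_{L^2}$ and this quantity is constant along the flow, the solution can be restarted from any time with the same fixed step, and iterating extends it to all of $\R$, producing the global solution in $C(\R,L^2_{rad})\cap L^{1/\kappa}_{loc}(\R,L^{\alpha+2})$. The extra integrability $u\in L^q_{loc}(\R,L^r)$ for every $(q,r)\in\mathcal{A}_\beta$ comes from one further application of Proposition \ref{fst}\eqref{fst2} on each compact time interval, with $(q_1,r_1)=(q,r)$ and the nonlinearity estimated exactly as above. I expect the local step to be the only genuine obstacle: one must confirm that the subcritical gain $T^\omega$ with $\omega>0$ really makes the lifespan a function of $\|u_0\|_{L^2}$ alone --- which is what renders the globalization automatic once mass is conserved --- and one must legitimately justify the mass identity for merely $L^2$ data. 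Both steps hinge on the loss-free radial Strichartz estimate of Proposition \ref{fst}\eqref{fst2}, which is the sole place where the radial assumption and the restriction $\tfrac{2n}{2n-1}<\beta<2$ enter.
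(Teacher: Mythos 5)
Your proposal is correct and follows essentially the same route as the paper: a contraction via the radial Strichartz estimates of Proposition \ref{fst}\eqref{fst2} together with Lemma \ref{lemlwp}, a lifespan depending only on $\|u_0\|_{L^2}$, globalization by conservation of mass, and a final Strichartz application to get $u\in L^q_{loc}L^r$ for every $(q,r)\in\mathcal{A}_\beta$. The only differences are cosmetic refinements --- you contract in $Y(T)$ alone and recover $C_TL^2$ afterwards (the paper contracts directly in $X_1(T)=C_TL^2_{rad}\cap L^{1/\kappa}_TL^{\alpha+2}$), and you explicitly justify the mass identity and flag the excluded pair $(q,r,n)=(\infty,2,2)$ via Christ--Kiselev, two points the paper passes over silently.
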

\begin{proof}
By Duhamel's principle,  \eqref{FNLSP} is formulated as the integral equation
\begin{eqnarray}\label{DP}
  u(x,t)=U_{\beta}(t)u_{0}(x) \pm i  \int_0^t U_\beta(t-s)G(0,u,0)(s) ds :=\Lambda (u)(x,t).  
\end{eqnarray}
    Let $R$ and $T$ be a positive real numbers to be chosen later. Define $$B(R,T)=\{u\in X_{1}(T):\|u\|_{X_{1}(T)}\leq R\}$$ where
    $$X_1(T)=C_{T}L^2_{rad} ~\cap~ L^{1 / \kappa}_{T}L^{\alpha+2}$$
equipped with the norm 
$$\|v\|_{X_1(T)}=\|v\|_{L^{\infty}_{T}L^2} + \|v\|_{L^{1 / \kappa}_{T}L^{\alpha+2}}.$$
 Using Proposition \ref{fst}\eqref{fst2} with $r_{1} \in \{2,\alpha+2\},$ we have 
\begin{equation}\label{RBound}
    \|U_{\beta}(t)u_{0}\|_{X_1(T)} \lesssim_{n,\alpha,\beta} \|u_{0}\|_{L^2}.
\end{equation}
This suggests the choice of $$R=2C(n,\alpha,\beta)\|u_{0}\|_{L^2}.$$
Using Lemma \ref{lemlwp} with $r_{1} \in \{2,\alpha+2\},$ we have
\begin{align}
   \left|\left| \int_0^t U_\beta(t-s)G(0,u,0)(s) ds \right|\right|_{X_{1}(T)} \nonumber
   &\lesssim_{n,\alpha,\beta} T^{\omega} \|u\|_{Y(T)}^{\alpha+1}\nonumber\\
   & \label{TBound}\leq T^{\omega} R^{\alpha+1} \leq R,
\end{align}
provided $u\in B(R,T)$ and $$T \lesssim_{n,\alpha,\beta} \|u_{0}\|_{L^{2}}^{-\alpha / \omega}.$$
This shows that $\Lambda(u) \in B(R,T)$. Similarly, we can show that $\Lambda(u)$ is a contraction map from $B(R,T)$ to $B(R,T)$. Using Banach contraction principle, there exists a unique solution $u$ to  \eqref{FNLSP} in the time interval of existence $[0,T]$ with $T$ depending on $\|u_{0}\|_{L^{2}}$. Since  \eqref{FNLSP} enjoys conservation of mass \eqref{mass}, we can extend the solution globally.
\par{The last property follows from the Strichartz estimates applied with an arbitrary $\beta$- fractional admissible pair on the left hand side and same pair on the right hand side of \eqref{RBound} and \eqref{TBound}. }
\end{proof}
\section{Proof of  Theorems \ref{lwp} and \ref{gwp}}\label{s4}
\subsection{Local well-posedness in $L^2_{rad}+M^{\alpha+2,(\alpha+2)'}_{rad}$}\label{seclwp}In this subsection, we shall prove Theorem \ref{lwp}. First, we introduce some notation before proceeding with the main proof.

We are going to work in the Banach space $X(T)$ expressed as
\begin{equation}
\begin{aligned}\label{X(T)}
        X(T)&:= X_{1}(T)+X_{2}(T)
\end{aligned}
\end{equation}
where $$X_1(T):=C_{T}L^2_{rad} ~\cap~ L^{1 / \kappa}_{T}L^{\alpha+2}$$
equipped with the norm 
$$\|v\|_{X_1(T)}=\max \left\{\|v\|_{L^{\infty}_{T}L^2},\|v\|_{L^{1 / \kappa}_{T}L^{\alpha+2}}\right\} $$ and $$ X_2(T):=C_{T}M^{\alpha+2,(\alpha+2)'}_{rad}.$$\\
The norm on $X(T)$ is given as 
\begin{equation*}
\|u\|_{X(T)} =\inf_{\substack{u=v+w \\ v \in X_1(T) \\ w \in X_2(T)}} \left(\|v\|_{X_1(T)} + \|w\|_{X_2(T)} \right).
\end{equation*}
\begin{Remark}\label{lemlwp2}
    For $T\leq 1$, by H\"older's inequality for the time variable and Lemma \ref{srp}\eqref{embedding},  we have $\|\cdot\|_{Y(T)} \lesssim_{n} \|\cdot\|_{X(T)}.$
\end{Remark}

\begin{proof}[\textbf{Proof of Theorem \ref{lwp}}] 
Let $a$ and $T$ be a positive real numbers (to be chosen later).  Define $$B(a,T)=\{u\in X(T):\|u\|_{X(T)}\leq a\}.$$
We shall show that $\Lambda(u)$ (as defined in \eqref{DP}) is a contraction mapping on $B(a, T).$
Assume w.l.o.g. $T\leq1.$ Consider an arbitrary decomposition of $u_{0}=v_{0}+w_{0}\in L^{2}_{rad}+ M^{\alpha+2,(\alpha+2)'}_{rad}, v_{0}\in L^{2}_{rad}$ and $w_{0}\in M^{\alpha+2,(\alpha+2)'}_{rad}.$
For the linear evolution of $u_{0},$ using Proposition \ref{fst}\eqref{fst2} and Proposition \ref{uf}, we have 
\begin{align*}
     \|U_{\beta}(t)u_{0} \|_{X(T)}&\leq  \|U_{\beta}(t)v_{0} \|_{X_{1}(T)}+\|U_{\beta}(t)w_{0} \|_{X_{2}(T)} \\ &\lesssim_{n,\alpha,\beta} \|v_{0}\|_{L^{2}}+(1+T)^{n\left( \frac{1}{2}-\frac{1}{\alpha+2} \right)} \|w_{0}\|_{M^{\alpha+2,(\alpha+2)'}}\\
&\lesssim_{n}\|v_{0}\|_{L^{2}}+\|w_{0}\|_{M^{\alpha+2,(\alpha+2)'}}.
\end{align*}
Since the decomposition is arbitrary, it follows that
    $$\|U_{\beta}(t)u_{0} \|_{X(T)} \lesssim_{n,\alpha,\beta} \|u_{0}\|_{L^2+M^{\alpha+2,(\alpha+2)'}}.$$
This suggests the choice of $a=2C(n,\alpha,\beta)\|u_{0}\|_{L^{2}+M^{\alpha+2,(\alpha+2)'}}.$ The integral part is estimated using $X_{1}(T)\hookrightarrow X(T),$ Lemma \ref{lemlwp} (with $r_{1} \in \{\alpha+2,2\}$) and Remark \ref{lemlwp2} as follows
    \begin{align}
    \left|\left| \int_{0}^{t} U_{\beta}(t-\tau)(|u|^{\alpha}u)(\tau)d\tau \right|\right|_{X(T)}
    &\lesssim \left|\left| \int_{0}^{t} U_{\beta}(t-\tau)(|u|^{\alpha}u)(\tau)d\tau \right|\right|_{X_{1}(T)} \nonumber\\
    &\lesssim_{n,\alpha,\beta} T^{\omega} \|u\|_{Y(T)}^{\alpha+1}\nonumber\\
    &\lesssim_{n} T^{\omega} \|u\|_{X(T)}^{\alpha+1} \leq T^{\omega}a^{\alpha+1} \nonumber
    \end{align}
provided $u\in B(a,T)$. Taking 
    \begin{equation}\label{Tchoice}
        T :=\min \left\{ 1,~C(n,\alpha,\beta)  \|u_{0}\|_{L^2+M^{\alpha+2,(\alpha+2)'}}^{-\alpha /\omega } \right\}~,
    \end{equation}
    we conclude $\Lambda(u)\in B(a, T).$ Similarly,  one can show that $\Lambda(u)$ is a contraction mapping. Using Banach contraction mapping principle, we obtain a unique fixed point $u$ for $\Lambda,$ which is a solution to \eqref{FNLS} on the time interval $[0,T]$.
\end{proof}
\subsection{Global well-posedness in $M^{p,p'}_{rad}$}\label{secgwp} In this subsection, we shall prove that, when 
$u_{0} \in M^{p,p'}_{rad},$ the local solution  $u$ obtained in Theorem \ref{lwp} can be extended globally in time.
Suppose to the contrary that the solution established in 
Theorem \ref{lwp} is not global in time. Thus, we have the maximal time $T^*< \infty.$  In this case,  we shall  produce a solution $u$  of \eqref{FNLS} (to be defined in \eqref{ss} below), which will exist on a larger interval $[0, T_1]$ for $T_1> T^*.$ This will lead to a contradiction to the maximal time interval $[0, T^*).$

We start by decomposing initial data $u_0 \in M^{p, p'}_{rad} \subset L^2_{rad} + M^{(\alpha +2), (\alpha +2)'}_{rad} $ into two parts such that the size of $M^{(\alpha +2), (\alpha +2)'}-$data can be controlled by  arbitrary small quantity.\\  Using Lemma \ref{ipt} for radial functions,  for any $N>1$ and $u_0 \in M^{p, p'}_{rad},$ there exists  $\phi_0 \in L^2_{rad}$ and $ \psi_0 \in M^{(\alpha +2), (\alpha +2)'}_{rad}$ 
such that 
\begin{equation}\label{dp}
    u_0= \phi_0 + \psi_0
\end{equation}
with 
\begin{eqnarray}\label{asi}
\|\phi_0\|_{L^2} \lesssim N^{\gamma},  \quad \|\psi_0\|_{M^{(\alpha +2), (\alpha +2)'}} \lesssim  1/N
\end{eqnarray}
where\begin{equation}\label{betap}
    \gamma = \frac{\frac{1}{2} - \frac{1}{p}}{\frac{1}{p} - \frac{1}{\alpha+2}}.
\end{equation}
Now, Consider  \eqref{FNLSP} having initial data $\phi_0,$ namely
\begin{eqnarray}
\begin{cases}
 i \partial_t v_{0} + (-\Delta)^\frac{\beta}{2} v_{0}\pm |v_{0}|^{\alpha}v_{0}=0 \\
v_{0}(\cdot,0)=\phi_{0}\in L^2_{rad}.   
\end{cases}        
    \end{eqnarray}
By Theorem \ref{GWPL2Rad},  \eqref{FNLSP} has a unique solution $v_{0}$ in the space
\begin{eqnarray}\label{gwpl2}
  C(\R, L^2_{rad}) \cap L_{loc}^{q}(\R,L^{r} )
\end{eqnarray} with $(q,r) \in \mathcal{A}_{\beta}$ and satisfies
\begin{equation}\label{v0qr2}
\sup_{(q,r)\in \mathcal{A}_{\beta}}\|v_{0}\|_{L_{loc}^{q} L^{r}} \lesssim_{n,r,\beta} \|\phi_{0}\|_{L^{2}}.
\end{equation}
Next, we consider the modified  \eqref{FNLSP}  corresponding to the evolution of $\psi_{0}$: 
\begin{eqnarray}\label{ivpMod}
\begin{cases}
 i \partial_t w + (-\Delta)^\frac{\beta}{2} w \pm (|w+v_{0}|^{\alpha}(w+v_{0})-|v_{0}|^{\alpha}v_{0})=0 \\
w(\cdot,0)=\psi_{0}\in M^{\alpha+2, (\alpha+2)'}_{rad} .
\end{cases}
\end{eqnarray}
The solution to the above I.V.P. \eqref{ivpMod} is given as
\begin{equation}
U_{\beta}(t)\psi_{0}+w_{0},
\end{equation}
where $w_0$ is the nonlinear interaction associated with $\psi_0,$ expressed as \begin{align}
    w_{0} &= \pm i \int_{0}^{t}  U_\beta(t-\tau)\left( \big| U_{\beta}(\tau)\psi_{0}+w_{0}+v_{0}\big|^{\alpha} (U_{\beta}(\tau)\psi_{0}+w_{0}+v_{0}) - |v_{0}|^{\alpha} v_{0}\right) \, d\tau \nonumber\\
    &=\pm i \int_{0}^{t} U_\beta(t-\tau)G(v_{0} + U_{\beta}(\tau)\psi_{0}, w_{0},0) \, d\tau  \pm i \int_{0}^{t} U_\beta(t-\tau)G(v_{0}, U_{\beta}(\tau)\psi_{0},0)\, d\tau.\label{w01}
\end{align}
Thus, the solution to the  \eqref{FNLSP}  having initial data $u_{0}$ in \eqref{dp} can be written as follows
\begin{eqnarray}\label{solnlocal}
v_{0}+U_{\beta}(t)\psi_{0}+w_{0}.
\end{eqnarray}
 
 We remark that  $v_0$ and $ U_{\beta}(t)\psi_0$ are globally defined in appropriate spaces as a result of  \eqref{gwpl2} and Proposition \ref{uf}. To establish the desired global existence, we first examine the time interval of existence for $w_0$. To this end,  we prove local existence of solution to the integral equation \eqref{w01} for general $\phi$ and $\psi$ as it is required at each stage of the iteration.
\begin{prop}\label{w0exist} Let $\phi \in L^2_{rad}, \psi\in M^{(\alpha +2), (\alpha +2)'}_{rad}$ and $Y(T)$ be as in  \eqref{YT}.  Assume that  $\beta$ and $\alpha$  be  as in Theorem \ref{gwp}. Denote by  $v$  the $L^2-$global solution for initial value $\phi$.
Then  there exists  a constant $C=C(n, \alpha,\beta)>0$ such that the integral equation $$w= \pm i \int_{0}^{t} U_\beta(t-\tau) G(v + U_{\beta}(\tau)\psi, w,0) \, d\tau   \pm i \int_{0}^{t}  U_\beta(t-\tau) G(v, U_{\beta}(\tau)\psi,0) \, d\tau$$ has a unique solution  $w \in Y(T)$   provided $T$ satisfying
\begin{align}
    \label{c1} T &\leq 1 \\
    \label{c2} T &\leq C \left( \|\phi\|_{L^{2}} + \|\psi\|_{M^{\alpha+2,(\alpha+2)'}} \right)^{-\alpha /\omega} \\
    \label{c3} T &\leq C \left( \|\psi\|_{M^{\alpha+2,(\alpha+2)'}} \right)^{-\alpha / (\omega + \alpha \kappa)}.
\end{align}
\end{prop}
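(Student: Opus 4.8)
The plan is to solve the integral equation by the Banach fixed point theorem for the map
\[
\Phi(w) := \pm i \int_{0}^{t} U_\beta(t-\tau)\, G\big(v + U_\beta(\tau)\psi,\, w,\, 0\big)\, d\tau \;\pm\; i \int_{0}^{t} U_\beta(t-\tau)\, G\big(v,\, U_\beta(\tau)\psi,\, 0\big)\, d\tau
\]
on a closed ball of the radial space $Y(T) = L^{1/\kappa}_{T} L^{\alpha+2}_{rad}$. Because $\phi$ and $\psi$ are radial and the propagator $U_\beta(t)$ has the radial Fourier multiplier $e^{-(2\pi)^\beta i |\xi|^\beta t}$, both $v$ and $U_\beta(\tau)\psi$ are radial; since $G$ acts pointwise it preserves radiality, so $\Phi$ maps radial functions to radial functions and the radial Strichartz estimate in Proposition \ref{fst}\eqref{fst2} (hence Lemma \ref{lemlwp}) is available throughout.

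Write $P := \|\phi\|_{L^2}$ and $Q := \|\psi\|_{M^{\alpha+2,(\alpha+2)'}}$. Two preliminary bounds drive the argument. First, the pair $(1/\kappa,\alpha+2)$ lies in $\mathcal{A}_\beta$, so Theorem \ref{GWPL2Rad} (via \eqref{v0qr2}) gives $\|v\|_{Y(T)} \lesssim_{n,\alpha,\beta} P$. Second, combining the embedding $M^{\alpha+2,(\alpha+2)'} \hookrightarrow L^{\alpha+2}$ from Lemma \ref{srp}\eqref{embedding}, the boundedness of $U_\beta(t)$ on $M^{\alpha+2,(\alpha+2)'}$ from Proposition \ref{uf}, and $T \le 1$, I obtain
\[
\|U_\beta(\cdot)\psi\|_{Y(T)} \lesssim \Big( \int_0^T \big( (1+t)^{n(\frac12 - \frac{1}{\alpha+2})} Q \big)^{1/\kappa}\, dt \Big)^{\kappa} \lesssim_{n,\alpha,\beta} T^{\kappa} Q.
\]
The gain $T^\kappa$ extracted here is precisely what will generate the third hypothesis \eqref{c3}.

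Applying Lemma \ref{lemlwp} with left pair $(q_1,r_1) = (1/\kappa,\alpha+2)$, the first integral in $\Phi(w)$ (take $u = v + U_\beta(\cdot)\psi$ and difference $w - 0$) is controlled by $T^\omega \|w\|_{Y(T)}\big(\|v+U_\beta(\cdot)\psi\|_{Y(T)}^\alpha + \|w\|_{Y(T)}^\alpha\big)$, while the second, $w$-independent, integral is bounded by $T^\omega \|U_\beta(\cdot)\psi\|_{Y(T)}\big(\|v\|_{Y(T)}^\alpha + \|U_\beta(\cdot)\psi\|_{Y(T)}^\alpha\big) \lesssim T^{\omega+\kappa} Q (P+Q)^\alpha$. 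For differences, the cancellation in $G$ gives $G(v+U_\beta\psi, w_1, 0) - G(v+U_\beta\psi, w_2, 0) = G(v+U_\beta\psi, w_1, w_2)$, so Lemma \ref{lemlwp} yields
\[
\|\Phi(w_1) - \Phi(w_2)\|_{Y(T)} \lesssim T^\omega \|w_1 - w_2\|_{Y(T)}\big( (P+Q)^\alpha + \|w_1\|_{Y(T)}^\alpha + \|w_2\|_{Y(T)}^\alpha \big).
\]
I would then run the fixed point on the ball of radius $\rho := c_0 T^\kappa Q$ for a suitable constant $c_0 = c_0(n,\alpha,\beta)$. Hypothesis \eqref{c2} makes $T^\omega(P+Q)^\alpha$ as small as desired, which dominates the source term (it is $\lesssim [T^\omega(P+Q)^\alpha]\,\rho$) and the $w$-linear contribution; hypothesis \eqref{c3} makes $T^\omega \rho^\alpha \approx T^{\omega+\alpha\kappa} Q^\alpha$ small, controlling the higher-order-in-$w$ term and the Lipschitz constant. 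Choosing the constants in \eqref{c2}--\eqref{c3} small enough forces $\Phi$ to be a self-map of the ball with Lipschitz constant at most $1/2$, and the contraction principle produces the unique $w \in Y(T)$.

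The fixed point mechanism itself is routine once Lemma \ref{lemlwp} is available; the genuinely delicate point is the bookkeeping of the powers of $T$ and of $Q = \|\psi\|_{M^{\alpha+2,(\alpha+2)'}}$. One must measure the free evolution $U_\beta(\cdot)\psi$ in the space-time norm $Y(T)$ to harvest the factor $T^\kappa$, and then calibrate the radius $\rho \approx T^\kappa Q$ so that the three scales $T^\omega(P+Q)^\alpha$, $T^{\omega+\kappa} Q (P+Q)^\alpha$ and $T^{\omega+\alpha\kappa} Q^\alpha$ are simultaneously small. This calibration is exactly what manufactures conditions \eqref{c2} and \eqref{c3}, with \eqref{c1} ensuring that the time weights $(1+t)^{n(\frac12 - \frac{1}{\alpha+2})}$ remain bounded on $[0,T]$.
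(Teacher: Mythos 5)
Your proposal is correct and follows essentially the same route as the paper's proof: a contraction argument in $Y(T)$ on a ball of radius comparable to $T^{\kappa}\|\psi\|_{M^{\alpha+2,(\alpha+2)'}}$, driven by Lemma \ref{lemlwp}, the radial Strichartz bound \eqref{v0qr2}, the embedding of Lemma \ref{srp}\eqref{embedding} together with Proposition \ref{uf}, and H\"older in time to harvest the factor $T^{\kappa}$ from the free evolution, with \eqref{c2} and \eqref{c3} calibrated exactly as in the paper. Your only deviations are immaterial: a slightly cruder source-term bound $T^{\omega+\kappa}Q(P+Q)^{\alpha}$ needing only \eqref{c2} (the paper splits it into $T^{\omega+\kappa}P^{\alpha}Q+T^{\omega+\kappa(\alpha+1)}Q^{\alpha+1}$ and invokes both \eqref{c2} and \eqref{c3}), and an explicit difference estimate via $G(u,w_1,0)-G(u,w_2,0)=G(u,w_1,w_2)$ where the paper merely asserts contractivity "follows similarly."
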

\begin{proof}
     Define $$B(A,T)=\{w\in Y(T):\|w\|_{Y(T)}\leq A\}$$
   with $A>0$ (to be choosen later). Let $T$ be the minimum of the right-hand sides of the conditions \eqref{c1},  \eqref{c2} and \eqref{c3}.
    Define the operator
    $$\Gamma(w):= \pm i \int_{0}^{t} U_\beta(t-\tau) G(v + U_{\beta}(\tau)\psi, w,0) \, d\tau   \pm i \int_{0}^{t}  U_\beta(t-\tau) G(v, U_{\beta}(\tau)\psi,0) \, d\tau.$$
   Using Lemma \ref{lemlwp} and embedding $L^{\infty}_{T} \hookrightarrow L^{1 / \kappa}_{T}$ under the assumption \eqref{c1}, for any $v_{1},v_{2} \in \C,$ we have 
    \begin{align}
    \left\| \int_{0}^{t} U_\beta(t-\tau)G(v_{1},v_{2},0) \, d\tau \right\|_{Y(T)}
   \lesssim_{n,\alpha,\beta}\label{before4.10} & T^{\omega}
     \left(\|v_{1}\|^{\alpha}_{Y(T)} \|v_{2}\|_{Y(T)}+
     \|v_{2}\|^{\alpha + 1}_{Y(T)}\right)
     \end{align}\ignorespaces
     \begin{align}
     \label{4.10}\lesssim & \left( T^{\omega + \kappa} \right) 
     \|v_{1}\|^{\alpha}_{Y(T)} \|v_{2}\|_{L^{\infty}_{T}L^{\alpha+2}} 
   + \left( T^{\omega + \kappa(\alpha+1)} \right) 
     \|v_{2}\|^{\alpha + 1}_{L^{\infty}_{T}L^{\alpha+2}}.
\end{align}
Using the estimate \eqref{4.10} for $v_{1}=v , v_{2}=U_{\beta}(\tau)\psi$ along with \eqref{v0qr2}, Lemma \ref{srp} \eqref{embedding} and Proposition \ref{uf} under the assumption \eqref{c1}, we obtain\\
    \begin{align*} \hspace{-7cm}\left|\left| \displaystyle\int_{0}^{t} U_\beta(t-\tau)G(v,U_{\beta}(\tau)\psi,0) \, d\tau \right|\right|_{Y(T)} 
    \end{align*}
    \begin{align*}
   \lesssim_{\alpha,n,\beta}& \left( T^{\omega + \kappa} \right) \|v\|_{Y(T)}^{\alpha}  \| U_{\beta}(\tau)\psi \|_{L^{\infty}_{T}L^{\alpha+2}} + \left( T^{\omega + \kappa(\alpha+1)} \right) \| U_{\beta}(\tau)\psi \|_{L^{\infty}_{T}, 
     L^{\alpha+2}}^{\alpha+1} \\
     \lesssim_{\alpha,n,\beta} & \left( T^{\omega + \kappa} \right) \| \phi \|_{L^{2}}^{\alpha}  \| \psi\|_{M^{\alpha+2, (\alpha+2)'}}+\left( T^{\omega + \kappa(\alpha+1)} \right)  \| \psi \|_{M^{\alpha+2, (\alpha+2)'}}^{\alpha+1} \\
     =& T^{\kappa} \| \psi \|_{M^{\alpha+2, (\alpha+2)'}}\left( T^{\omega} \right. \| \phi \|_{L^{2}}^{\alpha} 
      +\left. T^{\omega + \alpha \kappa}\| \psi \|_{M^{\alpha+2, (\alpha+2)'}}^{\alpha}  \right) \\
     \lesssim_{\alpha,n,\beta} &   T^{\kappa} \| \psi \|_{M^{\alpha+2, (\alpha+2)'}}.
\end{align*}
The last inequality follows due to our assumptions \eqref{c2} and \eqref{c3}.
This suggests the choice of
    \begin{equation}\label{AA}
    A= \frac{3}{C(n,\alpha,\beta)}T^{\kappa}\|\psi\|_{M^{\alpha+2,(\alpha+2)'}}.
    \end{equation}
    where $C=C(n,\alpha,\beta)$ is the same constant that appears in \eqref{c2} and \eqref{c3}. Thus, \begin{equation}\label{gammaw1}
      \left|\left| \displaystyle\int_{0}^{t} U_\beta(t-\tau)G(v,U_{\beta}(\tau)\psi,0)\, d\tau \right|\right|_{Y(T)}\leq \frac{A}{3}.
    \end{equation}
     Using the estimate \eqref{before4.10} for $v_{1}=v + U_{\beta}(\tau)\psi$ and $ v_{2}=w$ along with \eqref{v0qr2}, Lemma \ref{srp} \eqref{embedding}  and Proposition \ref{uf} under the assumption \eqref{c1}, we have
\begin{align*}
    \hspace{-7cm} \left\| \int_{0}^{t}  U_\beta(t-\tau)G(v +  U_{\beta}(\tau)\psi, w,0) \, d\tau \right\|_{Y(T)} 
\end{align*}\ignorespaces
\begin{align*}
    & \lesssim_{\alpha,n,\beta} T^{\omega}   \left( \|v
    + U_{\beta}(\tau)\psi \|_{Y(T)}^{\alpha} \|w\|_{Y(T)} + \|w\|_{Y(T)}^{\alpha+1} \right) \\
    & \lesssim_{\alpha,n,\beta}  \|w\|_{Y(T)}\left\{T^{\omega}   \left( \left( \|\phi\|_{L^{2}} + \|\psi\|_{M^{\alpha+2, (\alpha+2)'}} \right)^{\alpha} +\|w\|_{Y(T)}^{\alpha} \right)\right\} \\
    & \lesssim_{\alpha,n,\beta}\|w\|_{Y(T)} \left\{\frac{1}{3} +T^{\omega+\alpha \kappa}\|\psi\|^{\alpha}_{M^{\alpha+2,(\alpha+2)'}}   \right\}.
\end{align*}
In the last inequality, we have used \eqref{c2} in the first summand and substitute the norm of $w$ in $Y(T)$ to the power of $\alpha$ by $A^{\alpha}$, ($A$ given in \eqref{AA}) in the second summand. Considering the second summand of last inequality under the assumption \eqref{c3}, we get
        \begin{equation}\label{gamma2}
            \left\| \int_{0}^{t} U_\beta(t-\tau)G(v + U_{\beta}(\tau)\psi, w,0)\, d\tau \right\|_{Y(T)} \leq \frac{2A}{3}.
        \end{equation}
        Combining \eqref{gammaw1} and \eqref{gamma2}, we can say that $\Gamma(w)$ belongs to $B(A, T).$ Contractivity of $\Gamma$ follows similarly \footnote{$C(n,\alpha,\beta)$ is choosen small enough such that all requirements are fulfilled.}. Thus, by the Banach fixed-point theorem, we get a unique fixed point $w$ to the integral equation \eqref{w01} on the time-interval $[0,T].$
\end{proof}  
\begin{Remark} \label{nexphar}
   Note that both of the exponents on right hand side of conditions \eqref{c2} and \eqref{c3} involving $L^{2}$ and  $M^{\alpha+2,(\alpha+2)'}-$ norms are  negative as $0<\alpha<\frac{2\beta}{n}$ .
\end{Remark}
 \begin{cor}\label{winfty2}
 Under the hypothesis of Proposition \ref{w0exist}, there exists a constant $C(n,\alpha,\beta)$ satisfying
    \begin{equation*}
\|w\|_{L^{\infty}_{T}L^{2}}\lesssim_{n,\alpha,\beta} T^{\kappa}\|\psi\|_{M^{\alpha+2,(\alpha+2)'}}.
    \end{equation*}
\end{cor}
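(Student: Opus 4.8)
The plan is to re-run the two Duhamel estimates from the proof of Proposition \ref{w0exist}, but now measuring the output in the energy norm $L^{\infty}_{T}L^{2}$ rather than in $Y(T)=L^{1/\kappa}_{T}L^{\alpha+2}$. The key observation is that Lemma \ref{lemlwp} bounds $\left\|\int_0^t U_\beta(t-s)G(u,v,w)(s)\,ds\right\|_{L^{q_1}_T L^{r_1}}$ for \emph{any} $\beta$-admissible pair $(q_1,r_1)\in\mathcal{A}_\beta$, with a right-hand side whose functional form does not depend on $(q_1,r_1)$. Hence, taking the energy pair $(q_1,r_1)=(\infty,2)$ — exactly the choice $r_1=2$ already used in the proofs of Theorem \ref{GWPL2Rad} and Theorem \ref{lwp} — the inequalities \eqref{before4.10}, \eqref{4.10} and all subsequent estimates in the proof of Proposition \ref{w0exist} hold verbatim with $\|\cdot\|_{L^{\infty}_{T}L^{2}}$ in place of $\|\cdot\|_{Y(T)}$ on the left.

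First I would split, using the integral equation \eqref{w01},
$$\|w\|_{L^{\infty}_{T}L^{2}} \leq \left\|\int_0^t U_\beta(t-\tau) G(v+U_\beta(\tau)\psi, w, 0)\, d\tau\right\|_{L^{\infty}_{T}L^{2}} + \left\|\int_0^t U_\beta(t-\tau) G(v, U_\beta(\tau)\psi, 0)\, d\tau\right\|_{L^{\infty}_{T}L^{2}}.$$
For the second (source) term, I would repeat the chain of inequalities that produced \eqref{gammaw1}: applying estimate \eqref{4.10} with $v_1=v$ and $v_2=U_\beta(\tau)\psi$, then \eqref{v0qr2}, Lemma \ref{srp}\eqref{embedding} and Proposition \ref{uf}, together with the hypotheses \eqref{c2} and \eqref{c3}, yields the bound $\lesssim_{n,\alpha,\beta} T^{\kappa}\|\psi\|_{M^{\alpha+2,(\alpha+2)'}}$ directly.

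For the first term, I would reuse \eqref{before4.10} with $v_1=v+U_\beta(\tau)\psi$ and $v_2=w$, obtaining a bound of the shape $\|w\|_{Y(T)}\left\{T^{\omega}(\|\phi\|_{L^2}+\|\psi\|_{M^{\alpha+2,(\alpha+2)'}})^{\alpha} + T^{\omega+\alpha\kappa}\|\psi\|^{\alpha}_{M^{\alpha+2,(\alpha+2)'}}\right\}$. By \eqref{c2}–\eqref{c3} the braced factor is $\lesssim 1$, and since Proposition \ref{w0exist} has already established $\|w\|_{Y(T)}\leq A\lesssim_{n,\alpha,\beta} T^{\kappa}\|\psi\|_{M^{\alpha+2,(\alpha+2)'}}$, this term is likewise $\lesssim T^{\kappa}\|\psi\|_{M^{\alpha+2,(\alpha+2)'}}$. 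Adding the two contributions gives the claim.

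The point to be careful about — and really the only obstacle — is that no new fixed-point argument is needed: the nonlinear term still contains $w$, but it enters only through its $Y(T)$-norm, which is already controlled a posteriori by Proposition \ref{w0exist}, so the estimate is genuinely non-circular. A secondary technical caveat is the endpoint status of $(\infty,2)$, which is formally excluded from $\mathcal{A}_\beta$ when $n=2$; in that borderline case the $L^{\infty}_{T}L^{2}$ control can instead be justified by the unitarity of $U_\beta(t)$ on $L^2$ combined with a dual Strichartz estimate on the non-endpoint pair $(1/\kappa,\alpha+2)$, which produces the same inequality.
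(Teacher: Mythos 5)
Your argument is correct and is essentially the paper's own proof: the paper likewise splits $w$ via the triangle inequality into the two Duhamel terms of \eqref{w01} and re-runs \eqref{before4.10} and \eqref{4.10} with the energy pair $(\infty,2)$ on the left-hand side, absorbing the remaining nonlinear factor through the a priori bound $\|w\|_{Y(T)}\leq A \lesssim_{n,\alpha,\beta} T^{\kappa}\|\psi\|_{M^{\alpha+2,(\alpha+2)'}}$ from Proposition \ref{w0exist} together with conditions \eqref{c2}--\eqref{c3}. Your closing caveat about $(\infty,2)$ when $n=2$ is a reasonable precaution (the exclusion $(q,r,n)\neq(\infty,2,2)$ in the paper's definition of admissible pairs appears to be a typo for the usual $(2,\infty,2)$), and your fallback via unitarity of $U_{\beta}(t)$ on $L^{2}$ plus the dual estimate for the non-endpoint pair $(1/\kappa,\alpha+2)$ would close it correctly in any case.
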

\begin{proof}
Applying
Strichartz estimates (Proposition \ref{fst}) for $L^{\infty}_{T}L^{2}$ in place of $Y(T)-$ norm i.e. using an admissible pair $ (\infty,2)$ on the left hand side 
and the same pairs on the right hand side of \eqref{before4.10} and \eqref{4.10} in the proof of Proposition \ref{w0exist}, we obtain the desired result. Specifically,
\begin{align*}
\|w\|_{L^{\infty}_{T}L^{2}} \leq & \left|\left|\int_{0}^{t} U_\beta(t-\tau)G(v + U_{\beta}(\tau)\psi, w,0) \, d\tau \right|\right|_{L^{\infty}_{T}L^{2}}\\
&+\left|\left|\int_{0}^{t} U_\beta(t-\tau)G(v, U_{\beta}(\tau)\psi,0) \, d\tau \right|\right|_{L^{\infty}_{T}L^{2}}\\
&\lesssim_{n,\alpha,\beta} T^{\kappa}\|\psi\|_{M^{\alpha+2,(\alpha+2)'}}.
\end{align*}
\end{proof}

\begin{proof}[\textbf{Proof of Theorem \ref{gwp}}]
Denote the constant from Proposition \ref{w0exist} by $C=C(n,\alpha,\beta)$ and put
\begin{equation}\label{TN}
        T=T(N)=(3CN^{\gamma})^{-\alpha /\omega} .
    \end{equation}
Applying Proposition \ref{w0exist} for $\phi=\phi_{0}$ and $\psi=\psi_{0}$, we can say that the solution $u =v_0 + U_{\beta}(t) \psi_0 + w_0$ to  \eqref{FNLSP}  corresponding to the data $u_{0}$ in \eqref{dp}
exists in the interval $[0, T(N)]$.\\
We aim to extend our solution to the interval $[T(N), 2T(N)]$ by using similar procedure but with the new initial data as the sum of the following two functions:    
\[ \phi_{1}=v_{0}(T)+w_{0}(T) \quad \text{and} \quad \psi_{1} =U_{\beta}(T)\psi_{0}.\]
More generally, we 
 aim to  extend our solution further by using a similar procedure :
\begin{itemize}
    \item[--]We define $\phi_{k}$ and $\psi_k$ for $k \geq 1$ (for $k=0,\; \phi_{0}$ and $\psi_0$ are defined in \eqref{dp}) as follows:
    \begin{equation}\label{iter1}
 \phi_{k}= v_{k-1}(kT) + w_{k-1}(kT) \quad \text{and} \quad \psi_{k}= U_{\beta}(kT)\psi_{0}, 
\end{equation}
where
\begin{eqnarray}
    \begin{aligned}
     w_{k-1}(kT)=\pm i \displaystyle \int_{0}^{kT} U_{\beta}(kT-\tau) G(v_{k-1}+U_{\beta}(k\tau)\psi_{0},w_{k-1},0) d\tau \nonumber \\
    \pm i \displaystyle \int_{0}^{kT} U_{\beta}(kT-\tau)G(v_{k-1},U_{\beta}(k\tau)\psi_{0},0) d\tau.
    \end{aligned}
       \end{eqnarray}
       \item[--] Assume for $kT\leq T^* 
       \footnote{Note that $(K-1)T\leq T^*.$ Since $T\leq 1,$ we will have $KT\leq T^{*}+1.$}, \phi=\phi_{k}$ and $\psi=U_{\beta}(kT)\psi_{0}$, $T$ satisfy all three conditions \eqref{c1}, \eqref{c2} and \eqref{c3} of Proposition \ref{w0exist}, where $k \in \{0, 1,\cdots, K-1\}.$
       \item[--]Let $v_{k}$ be INLS evolution of  $\phi_k$, and by construction 
\begin{equation}\label{ss}
   u(\cdot, t)= v_{k} (\cdot, t-kT ) + w_{k}(\cdot, t-kT) + U_{\beta}(t) \psi_0,  \quad \text{if} \ t\in [kT, (k+1)T] 
\end{equation}
 defines a solution of \eqref{FNLS} for $k \in \{0, 1,\cdots, K-1\}$.  
 \end{itemize}
 We shall show that  the iterative process terminates with  $KT >T^*.$ Since  $v_{K}$ and $U_{\beta}(t)\psi_0$ are globally defined in appropriate spaces, we are left to extend the solution of nonlinear interaction term $w_{K}$ to the $K$th iteration. To do this, we shall use Proposition \ref{w0exist} with $\phi=\phi_{K}$ and $\psi=U_{\beta}(KT)\psi_{0}.$  

Considering Remark \ref{nexphar} and \eqref{TN}, $T=T(N)\to 0$ as $N\to \infty$. Thus,
the  smallness condition \eqref{c1} is satisfied independently of $k$ for large $N.$\\
 Using Proposition \ref{uf} and  \eqref{asi}, we have
\begin{align}\label{ref1} 
\|U_{\beta}(t)\psi_{0}\|_{L^{\infty}([0,T^{*}+1],M^{\alpha+2,(\alpha+2)'}) } &\lesssim_{n,T^*}\|\psi_{0}\|_{M^{\alpha+2,(\alpha+2)'}} \lesssim_{n} 1/N \xrightarrow{N \to \infty} 0.
    \end{align}
Inserting $U_{\beta}(kT)\psi_{0}$ in the right hand side of \eqref{c3}, we have
\begin{eqnarray*}
\begin{aligned}
    \left( \|U_{\beta}(kT)\psi_{0}\|_{M^{\alpha+2,(\alpha+2)'}} \right)^{-\alpha /(\omega + \alpha \kappa)}  \gtrsim_{n} N^{\alpha /(\omega + \alpha \kappa)} \xrightarrow{N \to \infty} \infty.
   \end{aligned}
\end{eqnarray*}  
Since the lower bound does not depend on $k$ and $T \xrightarrow{N \to \infty} 0,$  the condition \eqref{c3} holds for sufficiently large $N.$

\noindent 
Thus, we either have $KT> T^*$ or the condition \eqref{c2} fails in the last iterative step $k=K,$ i.e.
\begin{equation}\label{aim11}
    3CN^{\gamma}< \|\phi_{K}\|_{L^2}+ \|U_{\beta}(KT)\psi_{0}\|_{M^{\alpha+2,(\alpha+2)'}}.
\end{equation} 
Considering \eqref{ref1}, \eqref{aim11} can be written as
\begin{equation}\label{aim}
    3CN^{\gamma}< \|\phi_{K}\|_{L^2}+CN^{\gamma}.
\end{equation}
We claim that even under condition \eqref{aim},   we still have $KT>T^*$. This  clearly leads to a contradiction with the definition of $T^*.$ 

Considering the construction of $\phi_k$ and Corollary \ref{winfty2}, we note that $\phi_k \in L^2$ for $k\in \{0, 1,\cdots, K-1\}.$ Now exploiting the conservation of mass \eqref{mass} and Corollary \ref{winfty2} (for  $w=w_{k}$ and $\psi=U_{\beta}(kT)\psi_{0}, \; 0\leq k \leq K-1$), we have
    \begin{align}
        \|\phi_{K}\|_{L^{2}}  &\leq \|v_{K-1}\|_{L^{\infty}_{[(K-1)T,KT]}L^{2}}+\|w_{K-1}\|_{L^{\infty}_{[(K-1)T,KT]}L^{2}} \nonumber\\
        &= \|\phi_{K-1}\|_{L^2} + \|w_{K-1}\|_{L^{\infty}_{[(K-1)T,KT]}L^2}\nonumber\\
        & \leq \|v_{K-2}\|_{L^{\infty}_{[(K-2)T,(K-1)T]}L^{2}}+\|w_{K-2}\|_{L^{\infty}_{[(K-2)T,(K-1)T]}L^{2}} +\|w_{K-1}\|_{L^{\infty}_{[(K-1)T,KT]}L^{2}} \nonumber\\
        &= \|\phi_{K-2}\|_{L^2}+\|w_{K-2}\|_{L^{\infty}_{[(K-2)T,(K-1)T]}L^{2}} +\|w_{K-1}\|_{L^{\infty}_{[(K-1)T,KT]}L^{2}} \nonumber\\
        & \leq \cdots \leq  \|\phi_{0}\|_{L^{2}}+\sum_{k=0}^{K-1}\|w_{k}\|_{L^{\infty}_{[kT,(k+1)T]}L^{2}} \nonumber \\
        & \lesssim_{n, \alpha,\beta} N^{\gamma}+T^{\kappa}\sum_{k=0}^{K-1}\|U_{\beta}(kT)\psi_{0}\|_{M^{\alpha+2,(\alpha+2)'}}\nonumber\\
        &\lesssim_{n,T^{*}} N^{\gamma}+T^{\kappa}K\frac{1}{N}.\label{long}
    \end{align}
    In the last two inequalities, we have used \eqref{asi} and \eqref{ref1}.
   Thus, using \eqref{long} and \eqref{TN}, \eqref{aim} can be expressed as
 \begin{align}
     KT &\gtrsim_{n,\alpha,\beta,T^*} N^{1+\gamma}T^{1-\kappa} \approx N^{1+\gamma \left(1-\frac{\alpha (1-\kappa)} {\omega}\right)}\nonumber\\
    &\label{Npower}= N^{1-\gamma \left(-1+\frac{\alpha (1-\kappa)}{\omega}\right)}.
\end{align}
Note that $N$ can be chosen to be arbitrarily large. For any $\beta$ satisfying 
\begin{align}\label{betarange}
0 < \gamma <
\begin{cases}
 \frac{\omega}{\alpha (1-\kappa) -\omega} \quad &\text{if}\quad  \alpha (1-\kappa)-\omega>0\\
\infty \quad &\text{otherwise,} 
\end{cases}
\end{align}
the exponent of $N$ is positive in \eqref{Npower}, we get $KT>T^*$. This concludes the proof of Theorem \ref{gwp}.     
\end{proof}
\begin{Remark}\label{Whypso}
Recall $\gamma$ as defined in  \eqref{betap}.  The range of $\gamma$ in \eqref{betarange} decides the range of $p.$ Thus, we have $p \in (2,p_{max})$  with $p_{max}$ as given in \eqref{pmax}. This validates the choice of \(p\)  in Theorem \ref{gwp}.
\end{Remark}
\section{Proof of Theorem \ref{lwpHar} and Theorem \ref{gwpHar}}\label{s5}
\subsection{Local well-posedness in $L^2_{rad}+M^{\frac{4n}{2n-\nu},\frac{4n}{2n+\nu}}_{rad}$}\label{seclwp2}In this subsection, we shall prove Theorem \ref{lwpHar}. First, we introduce some notation and few lemmas before proceeding with the main proof.\\
Denote
\begin{equation}\label{theta}
    \theta:=\nu /\beta
\end{equation}
We define the Banach space $\tilde{X}(T)$ as
\begin{equation}
\begin{aligned}\label{tildeXT}
        \tilde{X}(T)&:= X_{3}(T)+X_{4}(T)
\end{aligned}
\end{equation}
where $$X_3(T):=C_{T}L^2_{rad} ~\cap~ L^{4/ {\theta}}_{T}L^{4n/(2n-\nu)}$$
equipped with the norm 
$$\|v\|_{X_3(T)}:=\max \left\{\|v\|_{L^{\infty}_{T}L^2},\|v\|_{L^{4/ {\theta}}_{T}L^{4n/(2n-\nu)}}\right\} $$ and $$ X_4(T):=C_{T}M^{\frac{4n}{2n-\nu},\frac{4n}{2n+\nu}}_{rad}.$$\\
The norm on $\tilde{X}(T)$ is given as 
\begin{equation*}
\|u\|_{\tilde{X}(T)} :=\inf_{\substack{u=v+w \\ v \in X_3(T) \\ w \in X_4(T)}} \left(\|v\|_{X_3(T)} + \|w\|_{X_4(T)} \right).
\end{equation*}
Denote $$Z(T):=L^{4/ {\theta}}_{T}L^{4n/(2n-\nu)}$$
\begin{Remark}\label{lemHarlwp2}
    Using H\"older's inequality for the time variable and Lemma \ref{srp}\eqref{embedding},  we have $\|\cdot\|_{Z(T)} \lesssim_{n} \|\cdot\|_{\tilde{X}(T)}$ for $T\leq 1.$ 
\end{Remark}
\begin{lem}\label{lemlwpHar}Let $n \geq 2, 0< \nu < \min \{\beta,n\}, \beta \in (\frac{2n}{2n-1}, 2)$ and $(q_{1},r_{1})\in \mathcal{A}_{\beta}$. Then, there exists a constant $C=C(n,\nu,\beta,r_{1})>0$
such that for any $T > 0$  
and $v,w_{1},w_{2} \in Z(T)$, the following estimate holds :
\begin{align*}
   \hspace{-0.5cm}\left|\left| \int_0^t U_\beta(t-s)\tilde{G}(v,w_{1},w_{2})(s) ds \right|\right|_{L^{q_{1}}_{T}L^{r_{1}}} &\lesssim_{n,\nu,\beta,r_{1}} T^{1-\theta}\|w_{1}-w_{2}\|_{Z(T)}\left(\|v\|_{Z(T)}^{2}
     + \|w_{1}\|_{Z(T)}^{2} + \|w_{2}\|_{Z(T)}^{2}\right.\nonumber\\
     +&\left. \|w_{1}\|_{Z(T)}\|w_{2}\|_{Z(T)}+\|v\|_{Z(T)}\|w_{1}\|_{Z(T)}+\|v\|_{Z(T)}\|w_{2}\|_{Z(T)}\right).
\end{align*}
\end{lem}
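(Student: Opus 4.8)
The plan is to mirror the proof of Lemma~\ref{lemlwp} line by line, replacing the pointwise bound of Lemma~\ref{eg} by the algebraic identity \eqref{haridentity} and inserting the Hardy--Littlewood--Sobolev inequality (Lemma~\ref{HLS}) to absorb the nonlocal convolution. First I would fix the $\beta$-fractional admissible pair
\[
(q_2,r_2)=\left(\tfrac{4}{\theta},\tfrac{4n}{2n-\nu}\right),\qquad\text{so that}\qquad (q_2',r_2')=\left(\tfrac{4}{4-\theta},\tfrac{4n}{2n+\nu}\right);
\]
one checks directly that $\frac{\beta}{q_2}=n\big(\frac12-\frac1{r_2}\big)=\frac{\nu}{4}$ and that $q_2,r_2\ge 2$ because $0<\nu<\beta<2$, so $(q_2,r_2)\in\mathcal A_\beta$ and $Z(T)=L^{q_2}_TL^{r_2}$. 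Applying the radial Strichartz estimate of Proposition~\ref{fst}\eqref{fst2} (the integrand $\tilde G(v,w_1,w_2)$ is radial when $v,w_1,w_2$ are) reduces the claim to the nonlinear bound
\[
\|\tilde G(v,w_1,w_2)\|_{L^{4/(4-\theta)}_TL^{4n/(2n+\nu)}}\lesssim_{n,\nu,\beta} T^{1-\theta}\,\|w_1-w_2\|_{Z(T)}\,\big(\text{quadratic in } v,w_1,w_2\big).
\]

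Next I would split $\tilde G$ using \eqref{haridentity} with $u_1=v+w_1$, $u_2=v+w_2$ (so $u_1-u_2=w_1-w_2$), writing
\[
\tilde G=(|\cdot|^{-\nu}\ast|v+w_1|^2)(w_1-w_2)+\big(|\cdot|^{-\nu}\ast(|v+w_1|^2-|v+w_2|^2)\big)(v+w_2),
\]
and I would control the second numerator pointwise by $\big||v+w_1|^2-|v+w_2|^2\big|\le(|v+w_1|+|v+w_2|)\,|w_1-w_2|$. For each summand the spatial $L^{4n/(2n+\nu)}$-norm is handled by the same three-step device: Hölder in $x$ separating the convolution factor (placed in $L^{2n/\nu}$) from the remaining factors (in $L^{4n/(2n-\nu)}$), which is legitimate since $\frac{2n+\nu}{4n}=\frac{\nu}{2n}+\frac{2n-\nu}{4n}$; then Lemma~\ref{HLS} with $\frac1p+\frac{\nu}{n}-1=\frac1q$, $q=\frac{2n}{\nu}$, $p=\frac{2n}{2n-\nu}$ (admissible as $0<\nu<n$), moving the convolution to $\||v+w_i|^2\|_{L^{2n/(2n-\nu)}}=\|v+w_i\|_{L^{4n/(2n-\nu)}}^2$; and a final Hölder in $x$ on the quadratic factor. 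This leaves, at each fixed time, a product of three $L^{4n/(2n-\nu)}_x$-norms, one of which is $\|w_1-w_2\|$.

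Finally I would run Hölder in time: each of the three spatial factors, as a function of $t$, lies in $L^{4/\theta}_t$ with norm its $Z(T)$-norm, so their product lies in $L^{4/(3\theta)}_t$; and since $\theta=\nu/\beta<1$ we have $\frac{4}{4-\theta}\le\frac{4}{3\theta}$, whence Hölder on the finite interval $[0,T]$ gives $\|\cdot\|_{L^{4/(4-\theta)}_T}\lesssim T^{1-\theta}\|\cdot\|_{L^{4/(3\theta)}_T}$, the exponent being $\frac{4-\theta}{4}-\frac{3\theta}{4}=1-\theta$, exactly the advertised power. Expanding $\|v+w_i\|_{Z(T)}\le\|v\|_{Z(T)}+\|w_i\|_{Z(T)}$ in the two summands then generates precisely the six quadratic monomials $\|v\|_Z^2,\ \|w_1\|_Z^2,\ \|w_2\|_Z^2,\ \|w_1\|_Z\|w_2\|_Z,\ \|v\|_Z\|w_1\|_Z,\ \|v\|_Z\|w_2\|_Z$ on the right-hand side. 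The one genuinely delicate point is the exponent bookkeeping: the three Hölder relations (spatial splitting, HLS, and temporal splitting) must all close simultaneously and collapse to a single clean power $T^{1-\theta}$. Once the admissible pair $(q_2,r_2)$ above is correctly identified this is essentially forced, and the observation that $0<\nu<\beta$ guarantees both the admissibility of $(q_2,r_2)$ and the strict inequality $\theta<1$ needed for the time integration is the crux that makes the scheme consistent.
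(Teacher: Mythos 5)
Your proposal is correct and is essentially the paper's own argument: the radial Strichartz estimate of Proposition \ref{fst}\eqref{fst2} applied with $(q_2,r_2)=\left(\frac{4}{\theta},\frac{4n}{2n-\nu}\right)$, the splitting \eqref{haridentity}, H\"older's inequality twice (once in space, once in time on $[0,T]$) combined with the Hardy--Littlewood--Sobolev inequality (Lemma \ref{HLS}), producing exactly the factor $T^{1-\theta}$ and the six quadratic monomials. Your checks of the admissibility of $(q_2,r_2)$, of the radiality of the integrand, and of the exponent bookkeeping merely make explicit details the paper's terse proof leaves implicit.
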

\begin{proof}
  Using Proposition \ref{fst}\eqref{fst2} for $(q_j, r_j) \in \mathcal{A}_{\beta}$, $j=1,2$, we have 
  \begin{align*}
   \left|\left| \int_0^t U_\beta(t-s)\tilde{G}(v,w_{1},w_{2})(s) ds \right|\right|_{L^{q_{1}}_{T}L^{r_{1}}} \lesssim_{n,\beta,r_{1},r_{2}}  \left|\left| \tilde{G}(v,w_{1},w_{2})\right|\right|_{L^{q'_{2}}_{T}L^{r'_{2}}} .
\end{align*}
Taking $(q_{2},r_{2})=(\frac{4}{\theta},\frac{4n}{2n-\nu})$ and using \eqref{haridentity}, H\"older's inequality twice and Lemma \ref{HLS}, we have
\begin{align*}
    \left|\left| \tilde{G}(v,w_{1},w_{2})\right|\right|_{L^{q'_{2}}_{T}L^{r'_{2}}} 
    \lesssim_{n,\alpha,\beta,r_{1}} &T^{1-\theta}\|w_{1}-w_{2}\|_{Z(T)}\left(\|v\|_{Z(T)}^{2}
     + \|w_{1}\|_{Z(T)}^{2} + \|w_{2}\|_{Z(T)}^{2}\right.\nonumber\\
     &+\left. \|w_{1}\|_{Z(T)}\|w_{2}\|_{Z(T)}+\|v\|_{Z(T)}\|w_{1}\|_{Z(T)}+\|v\|_{Z(T)}\|w_{2}\|_{Z(T)}\right).
\end{align*}
\end{proof}

\begin{proof}[\textbf{Proof of Theorem \ref{lwpHar}}] 
By Duhamel's principle, the equation \eqref{FNLSH} can be expressed as an integral equation:
\begin{eqnarray}\label{DPhar}
  u(x,t)=U_{\beta}(t)u_{0}(x) + i  \int_0^t U_\beta(t-s)\tilde{G}(0,u,0)(s) ds :=\Pi (u)(x,t).  
\end{eqnarray}
Let $a$ and $T$ be a positive real numbers (to be chosen later).  Define $$B(a,T)=\{u\in \tilde{X}(T):\|u\|_{\tilde{X}(T)}\leq a\}.$$
We shall show that $\Pi(u)$ (as defined in \eqref{DPhar}) is a contraction mapping on $B(a, T).$
Assume, without loss of generality, that $T\leq1.$ 
Consider an arbitrary decomposition of $u_{0}$ as $$u_{0}=v_{0}+w_{0}\in L^{2}_{rad}+ M^{\frac{4n}{2n-\nu},\frac{4n}{2n+\nu}}_{rad},$$
where $v_{0}\in L^{2}_{rad}$ and $w_{0}\in M^{\frac{4n}{2n-\nu},\frac{4n}{2n+\nu}}_{rad}.$\\

For the linear evolution of $u_{0},$ using Proposition \ref{fst}\eqref{fst2} and Proposition \ref{uf}, we have 
\begin{align*}
     \|U_{\beta}(t)u_{0} \|_{\tilde{X}(T)}&\leq  \|U_{\beta}(t)v_{0} \|_{X_{3}(T)}+\|U_{\beta}(t)w_{0} \|_{X_{4}(T)} \\ &\lesssim_{n,\nu,\beta} \|v_{0}\|_{L^{2}}+(1+T)^{n\left( \frac{1}{2}-\frac{2n-\nu}{4n} \right)} \|w_{0}\|_{M^{\frac{4n}{2n-\nu},\frac{4n}{2n+\nu}}}\\
&\lesssim \|v_{0}\|_{L^{2}}+\|w_{0}\|_{M^{\frac{4n}{2n-\nu},\frac{4n}{2n+\nu}}}.
\end{align*}
Since the decomposition is arbitrary, it follows that
    $$\|U_{\beta}(t)u_{0} \|_{\tilde{X}(T)} \lesssim_{n,\nu,\beta} \|u_{0}\|_{L^2+M^{\frac{4n}{2n-\nu},\frac{4n}{2n+\nu}}}.$$
This suggests the choice of $a=2C(n,\nu,\beta)\|u_{0}\|_{L^{2}+M^{\frac{4n}{2n-\nu},\frac{4n}{2n+\nu}}}.$ The integral part is estimated using $X_{3}(T)\hookrightarrow \tilde{X}(T),$ Lemma \ref{lemlwpHar} (with $r_{1} \in \{\frac{4n}{2n-\nu},2\}$) and Remark \ref{lemHarlwp2} as follows
    \begin{align}
    \left|\left| \int_{0}^{t} U_{\beta}(t-\tau)\tilde{G}(0,u,0)(\tau)d\tau \right|\right|_{\tilde{X}(T)}
    &\lesssim \left|\left| \int_{0}^{t} U_{\beta}(t-\tau)\tilde{G}(0,u,0)(\tau)d\tau \right|\right|_{X_{3}(T)} \nonumber\\
    &\lesssim_{n,\nu,\beta} T^{1-\theta} \|u\|_{Z(T)}^{3}\nonumber\\
    &\lesssim_{n} T^{1-\theta} \|u\|_{\tilde{X}(T)}^{3} \leq T^{1-\theta}a^{3}, \nonumber
    \end{align}
provided $u\in B(a,T)$. Taking 
    \begin{equation}\label{tildeTchoice}
        T :=\min \left\{ 1,~C(n,\nu,\beta)  \|u_{0}\|_{L^2+M^{\frac{4n}{2n-\nu},\frac{4n}{2n+\nu}}}^{-\frac{2}{1-\theta }} \right\}~,
    \end{equation}
    we conclude $\Pi(u)\in B(a, T).$ Similarly,  we can show that $\Pi(u)$ is a contraction mapping. Using Banach contraction mapping principle, we obtain a unique fixed point $u$ for $\Pi(u)$ which is a solution to \eqref{FNLSH} on the time interval $[0,T]$.
\end{proof}
\subsection{Global well-posedness in $M^{s,s'}_{rad}$}\label{secgwpHar} In this subsection, we shall prove that the local solution  $u$ obtained in Theorem \ref{lwpHar} can be extended globally in time when 
$u_{0} \in M^{s,s'}_{rad}$. We employ the same strategy used in the proof of Theorem \ref{gwp}.
Suppose to the contrary that the solution established in 
Theorem \ref{lwpHar} is not global in time and there exists the maximal time $T^*< \infty$. 

We firstly decompose initial data $u_0 \in M^{s,s'}_{rad} \subset L^2_{rad} + M^{\frac{4n}{2n-\nu},\frac{4n}{2n+\nu}}_{rad} $ into two parts using Lemma \ref{ipt} for radial functions. \\ For any $\tilde{N}>1$ and $u_0 \in M^{s,s'}_{rad},$ there exists  $\phi_0 \in L^2_{rad}$ and $ \psi_0 \in M^{\frac{4n}{2n-\nu},\frac{4n}{2n+\nu}}_{rad}$ 
such that 
\begin{equation}\label{dphar}
    u_0= \phi_0 + \psi_0
\end{equation}
with 
\begin{eqnarray}\label{asihar}
\|\phi_0\|_{L^2} \lesssim \tilde{N}^{\tilde{\gamma}},  \quad \|\psi_0\|_{M^{\frac{4n}{2n-\nu},\frac{4n}{2n+\nu}}} \lesssim  1/ \tilde{N}
\end{eqnarray}
where\begin{equation}\label{betaq}
    \tilde{\gamma} = \frac{\frac{1}{2} - \frac{1}{s}}{\frac{1}{s} - \frac{2n-\nu}{4n}}.
\end{equation}
Now, Consider \eqref{FNLSH} with initial data $\phi_0,$ namely
\begin{eqnarray}
\begin{cases}
 i \partial_t v_{0} - (-\Delta)^\frac{\beta}{2} v_{0}+(|\cdot|^{-\nu} \ast |v_{0}|^2)v_{0}=0 \\
v_{0}(\cdot,0)=\phi_{0}\in L^2_{rad}.   
\end{cases}        
    \end{eqnarray}
By \cite[Proposition 3.4]{DGBJDE}, \eqref{FNLSH} has a unique solution $v_{0}$ in the space
\begin{eqnarray}\label{gwpHarl2}
  C(\R, L^2_{rad}) \cap L_{loc}^{q}(\R,L^{r} )
\end{eqnarray} with $(q,r) \in \mathcal{A}_{\beta}$ and satisfies
\begin{equation}\label{v0qr2Har}
\sup_{(q,r)\in \mathcal{A}_{\beta}}\|v_{0}\|_{L_{loc}^{q} L^{r}} \lesssim_{n,r,\beta} \|\phi_{0}\|_{L^{2}}.
\end{equation}
Next, we consider the modified \eqref{FNLSH} corresponding to the evolution of $\psi_{0}$: 
\begin{eqnarray}\label{ivpModHar}
\begin{cases}
 i \partial_t w -(-\Delta)^\frac{\beta}{2} w +(|\cdot|^{-\nu} \ast |w+v_{0}|^2) (w+v_{0})-(|\cdot|^{-\nu} \ast |v_{0}|^2) v_{0}=0 \\
w(\cdot,0)=\psi_{0}\in M^{\frac{4n}{2n-\nu}, \frac{4n}{2n+\nu}}_{rad} .
\end{cases}
\end{eqnarray}
The solution to the above I.V.P. \eqref{ivpModHar} is given as
\begin{equation}
U_{\beta}(t)\psi_{0}+w_{0},
\end{equation}
where $w_0$ is the nonlinear interaction associated with $\psi_0,$ expressed as \begin{align}
w_{0} &= i\int_{0}^{t}  U_\beta(t-\tau)\left\{(|\cdot|^{-\nu} \ast |U_{\beta}(\tau)\psi_{0}+w_{0}+v_{0}|^2)(U_{\beta}(\tau)\psi_{0}+w_{0}+v_{0}) - (|\cdot|^{-\nu} \ast |v_{0}|^2) v_{0}\right\} \, d\tau \nonumber\\
    &=i\int_{0}^{t} U_\beta(t-\tau) \tilde{G}(v_{0} + U_{\beta}(\tau)\psi_{0}, w_{0},0) \, d\tau +  i \int_{0}^{t}  U_\beta(t-\tau) \tilde{G}(v_{0}, U_{\beta}(\tau)\psi_{0},0) \, d\tau\label{w01har}
\end{align}
Thus, the solution to the \eqref{FNLSH} having initial data $u_{0}$ (as given in \eqref{dphar}) can be written as:
\begin{eqnarray*}
v_{0}+U_{\beta}(t)\psi_{0}+w_{0}.
\end{eqnarray*}
 
 Since $v_0$ and $ U_{\beta}(t)\psi_0$ are globally defined in appropriate spaces as a result of  \eqref{gwpHarl2} and Proposition \ref{uf}, we are left to examine the time interval of existence for $w_0$ (given in  \eqref{w01har}). For this purpose, we establish local existence of \eqref{w01har} for general $\phi$ and $\psi$.
\begin{prop}\label{w0existhar} Let $\phi \in L^2_{rad}$ and $ \psi\in M^{\frac{4n}{2n-\nu},\frac{4n}{2n+\nu}}_{rad}$.  Assume that  $\beta$ and $\nu$  be  as in Theorem \ref{gwpHar}. Denote by  $v$  the $L^2-$global solution for initial value $\phi$.
Then  there exists  a constant $C=C(n, \nu,\beta)>0$ such that the integral equation
 $$w= i\int_{0}^{t} U_\beta(t-\tau) \tilde{G}(v + U_{\beta}(\tau)\psi, w,0) \, d\tau +  i \int_{0}^{t}  U_\beta(t-\tau) \tilde{G}(v, U_{\beta}(\tau)\psi,0) \, d\tau$$
     has a unique solution  $w \in Z(T)$   provided $T$ satisfying
\begin{align}
    \label{D1} T &\leq 1 \\
    \label{D2} T &\leq C \left( \|\phi\|_{L^{2}} + \|\psi\|_{M^{\frac{4n}{2n-\nu},\frac{4n}{2n+\nu}}} \right)^{-2 /(1-\theta)} \\
    \label{D3} T &\leq C \left( \|\psi\|_{M^{\frac{4n}{2n-\nu},\frac{4n}{2n+\nu}}} \right)^{-4/(2-\theta)}.
\end{align}
\end{prop}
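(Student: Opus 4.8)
The plan is to run a Banach fixed-point argument in the space $Z(T)=L^{4/\theta}_{T}L^{4n/(2n-\nu)}$, parallel to the proof of Proposition~\ref{w0exist} but with the cubic Hartree estimate of Lemma~\ref{lemlwpHar} in place of Lemma~\ref{lemlwp}. I would introduce the operator
$$\Gamma(w):= i\int_{0}^{t} U_\beta(t-\tau)\tilde{G}(v+U_{\beta}(\tau)\psi, w,0)\,d\tau + i\int_{0}^{t} U_\beta(t-\tau)\tilde{G}(v, U_{\beta}(\tau)\psi,0)\,d\tau$$
and look for a fixed point in the ball $B(A,T)=\{w\in Z(T):\|w\|_{Z(T)}\le A\}$, where $T$ is taken to be the minimum of the right-hand sides of \eqref{D1}--\eqref{D3} and $A>0$ is fixed in the course of the estimates. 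Writing $P=\|\phi\|_{L^{2}}$ and $Q=\|\psi\|_{M^{\frac{4n}{2n-\nu},\frac{4n}{2n+\nu}}}$ for brevity, I would first record two auxiliary bounds used throughout: $\|v\|_{Z(T)}\lesssim P$, which follows from \eqref{v0qr2Har} since the pair $(4/\theta,4n/(2n-\nu))$ lies in $\mathcal{A}_{\beta}$; and $\|U_{\beta}(\tau)\psi\|_{Z(T)}\lesssim T^{\theta/4}Q$, obtained by H\"older in time (costing the factor $T^{\theta/4}$), the embedding $M^{\frac{4n}{2n-\nu},\frac{4n}{2n+\nu}}\hookrightarrow L^{4n/(2n-\nu)}$ from Lemma~\ref{srp}\eqref{embedding}, and the boundedness of $U_{\beta}(\tau)$ on modulation spaces from Proposition~\ref{uf}, the time weight being harmless since $T\le1$.

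Next I would estimate the \emph{source} term, the second integral in $\Gamma$, by applying Lemma~\ref{lemlwpHar} with the slots $v$, $w_{1}=U_{\beta}(\tau)\psi$, $w_{2}=0$, which yields a bound by
$$T^{1-\theta}\,\|U_{\beta}(\tau)\psi\|_{Z(T)}\big(\|v\|_{Z(T)}^{2}+\|U_{\beta}(\tau)\psi\|_{Z(T)}^{2}+\|v\|_{Z(T)}\|U_{\beta}(\tau)\psi\|_{Z(T)}\big).$$
Inserting the two auxiliary bounds and pulling out the common factor $T^{\theta/4}Q$, the three resulting terms reduce respectively to $T^{1-\theta}P^{2}\lesssim1$ (exactly \eqref{D2}), to $T^{1-\theta/2}Q^{2}\lesssim1$ (a power of \eqref{D3}), and to the cross term $T^{1-3\theta/4}PQ\lesssim1$. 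The last is where the calibration matters: raising \eqref{D2} to the power $(1-\theta)/2$ and \eqref{D3} to the power $(2-\theta)/4$ and multiplying gives precisely $T^{1-3\theta/4}PQ\lesssim1$, since $(1-\theta)/2+(2-\theta)/4=1-3\theta/4$. Hence the source term is $\lesssim T^{\theta/4}Q$, which dictates the choice $A=3C^{-1}T^{\theta/4}Q$ so that it contributes at most $A/3$.

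For the \emph{principal} term, the first integral in $\Gamma$, I would apply Lemma~\ref{lemlwpHar} with slots $v+U_{\beta}(\tau)\psi$, $w_{1}=w$, $w_{2}=0$, factor out $\|w\|_{Z(T)}$, and use $\|v+U_{\beta}(\tau)\psi\|_{Z(T)}\lesssim P+Q$. Dominating the cross term by $\tfrac12\big((P+Q)^{2}+A^{2}\big)$, the coefficient of $\|w\|_{Z(T)}$ is $\lesssim T^{1-\theta}(P+Q)^{2}+T^{1-\theta}A^{2}$, whose first summand is controlled by \eqref{D2} and whose second, after substituting $A\sim T^{\theta/4}Q$, equals $T^{1-\theta/2}Q^{2}$ and is controlled by \eqref{D3}. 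Choosing the constant $C$ in \eqref{D1}--\eqref{D3} small enough forces this coefficient below $2/3$, so the principal term is at most $\tfrac{2A}{3}$ and $\Gamma$ maps $B(A,T)$ into itself; the contraction estimate has the same structure, the built-in difference $w_{1}-w_{2}$ of $\tilde{G}$ supplying the gain, and Banach's theorem then delivers the unique $w\in Z(T)$.

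The main obstacle, in contrast with the homogeneous power nonlinearity of Proposition~\ref{w0exist}, is the bookkeeping forced by the inhomogeneity of the cubic Hartree term: Lemma~\ref{lemlwpHar} produces mixed products of $\|v\|_{Z(T)}$, $\|U_{\beta}(\tau)\psi\|_{Z(T)}$ and $\|w\|_{Z(T)}$, so each of the thresholds \eqref{D1}--\eqref{D3} must be matched to a specific term. The delicate point is that the cross term $T^{1-3\theta/4}PQ$ is tamed only because the exponents $-2/(1-\theta)$ and $-4/(2-\theta)$ in \eqref{D2} and \eqref{D3} interpolate exactly; confirming that these powers of $T$ align is the crux of the argument.
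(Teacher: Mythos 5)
Your proposal is correct and follows essentially the same route as the paper: a Banach fixed-point argument in $Z(T)$ with the ball radius $A\sim T^{\theta/4}\|\psi\|_{M^{\frac{4n}{2n-\nu},\frac{4n}{2n+\nu}}}$, Lemma~\ref{lemlwpHar} applied once with slots $(v,\,U_{\beta}(\tau)\psi,\,0)$ for the source term and once with $(v+U_{\beta}(\tau)\psi,\,w,\,0)$ for the principal term, and the conditions \eqref{D1}--\eqref{D3} matched to the same three smallness requirements. The only (cosmetic) deviation is your handling of the cross term $T^{1-3\theta/4}PQ$ by interpolating powers of \eqref{D2} and \eqref{D3} via $(1-\theta)/2+(2-\theta)/4=1-3\theta/4$, whereas the paper absorbs it into $T^{1-\theta}(P+Q)^{2}$ using $T\le 1$ and \eqref{D2} alone; both computations are valid.
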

\begin{proof}
     Define $$B(\tilde{A},T)=\{w\in Z(T):\|w\|_{Z(T)}\leq \tilde{A}\}$$
   with $\tilde{A}>0$ (to be choosen later). Let $T$ be the minimum of the right-hand sides of the conditions \eqref{D1},  \eqref{D2} and \eqref{D3}.
    Define the operator
   \begin{align*}
    \tilde{\Gamma}(w)=i\int_{0}^{t} U_\beta(t-\tau) \tilde{G}(v + U_{\beta}(\tau)\psi, w,0) \, d\tau +  i \int_{0}^{t}  U_\beta(t-\tau) \tilde{G}(v, U_{\beta}(\tau)\psi,0) \, d\tau. 
     \end{align*}
   Using Lemma \ref{lemlwpHar} and embedding $L^{\infty}_{T} \hookrightarrow L^{4/ {\theta}}_{T}$ under the assumption \eqref{D1}, for any $v_{1},v_{2} \in \C,$ we have 
    \begin{equation*}
    \left\| \int_{0}^{t} U_\beta(t-\tau)\tilde{G}(v_{1},v_{2},0) \, d\tau \right\|_{Z(T)}
     \end{equation*}
     \begin{align}
   \lesssim_{n,\nu,\beta}\label{beforehar4.10} & T^{1-\theta}\|v_{2}\|_{Z(T)}
     \left(
     \|v_{1}\|^{2}_{Z(T)}+\|v_{1}\|_{Z(T)} \|v_{2}\|_{Z(T)}+
     \|v_{2}\|^{2}_{Z(T)}\right)\\
      \label{har4.10}\lesssim  T^{1-\frac{3\theta}{4}}&\|v_{1}\|^{2}_{Z(T)}\|v_{2}\|_{L^{\infty}_{T}L^{\frac{4n}{2n-\nu}}}+T^{1-\frac{\theta}{2}}\|v_{1}\|_{Z(T)} \|v_{2}\|_{L^{\infty}_{T}L^{\frac{4n}{2n-\nu}}}^{2}+T^{1-\frac{\theta}{4}}
\|v_{2}\|^{3}_{L^{\infty}_{T}L^{\frac{4n}{2n-\nu}}}.
\end{align}
Using the estimate \eqref{har4.10} for $v_{1}=v , v_{2}=U_{\beta}(\tau)\psi$ along with \eqref{v0qr2Har}, Lemma \ref{srp} \eqref{embedding} and Proposition \ref{uf} under the assumption \eqref{D1}, we obtain
    \begin{align*} \hspace{-7cm}\left|\left| \displaystyle\int_{0}^{t} U_\beta(t-\tau)\tilde{G}(v,U_{\beta}(\tau)\psi,0) \, d\tau \right|\right|_{Z(T)} 
    \end{align*}
    \begin{align*}
&\hspace{-0.5cm}\lesssim_{\nu,n,\beta} T^{1-\frac{3\theta}{4}}\|v\|^{2}_{Z(T)}\|U_{\beta}(\tau)\psi\|_{L^{\infty}_{T}L^{\frac{4n}{2n-\nu}}}+T^{1-\frac{\theta}{2}}\|v\|_{Z(T)} \|U_{\beta}(\tau)\psi\|_{L^{\infty}_{T}L^{\frac{4n}{2n-\nu}}}^{2}+T^{1-\frac{\theta}{4}}
     \|U_{\beta}(\tau)\psi\|^{3}_{L^{\infty}_{T}L^{\frac{4n}{2n-\nu}}} \\
    &\hspace{-0.5cm} \lesssim_{\nu,n,\beta}  T^{1-\frac{3\theta}{4}} \| \phi \|_{L^{2}}^{2}  \| \psi\|_{M^{\frac{4n}{2n-\nu}, \frac{4n}{2n+\nu}}}+ T^{1-\frac{\theta}{2}}   \| \phi \|_{L^{2}} \| \psi \|_{M^{\frac{4n}{2n-\nu}, \frac{4n}{2n+\nu}}}^{2}+T^{1-\frac{\theta}{4}} \| \psi\|_{M^{\frac{4n}{2n-\nu}, \frac{4n}{2n+\nu}}}^{3}\\
    & \hspace{-0.5cm}\leq T^{\frac{\theta}{4}} \| \psi \|_{M^{\frac{4n}{2n-\nu}, \frac{4n}{2n+\nu}}}\left( T^{1-\theta}  (\| \phi \|_{L^{2}}+\| \psi \|_{M^{\frac{4n}{2n-\nu}, \frac{4n}{2n+\nu}}})^{2} 
      + T^{1-\frac{\theta}{2}}\| \psi \|_{M^{\frac{4n}{2n-\nu}, \frac{4n}{2n+\nu}}}^{2} \right)  \\
    &\hspace{-0.5cm} \lesssim_{\nu,n,\beta}    T^{\frac{\theta}{4}}
     \| \psi \|_{M^{\frac{4n}{2n-\nu}, \frac{4n}{2n+\nu}}}.
\end{align*}
The last inequality follows due to our assumptions \eqref{D2} and \eqref{D3}.
This suggests the choice of
    \begin{equation}\label{AB}
    \tilde{A}= \frac{5}{C(n,\nu,\beta)}T^{\frac{\theta}{4}}\|\psi\|_{M^{\frac{4n}{2n-\nu},\frac{4n}{2n+\nu}}}.
    \end{equation}
    where $C=C(n,\nu,\beta)$ is the same constant that appears in \eqref{D2} and \eqref{D3}.\\ Thus, \begin{equation}\label{gammaw1har}
      \left|\left| \displaystyle\int_{0}^{t} U_\beta(t-\tau)\tilde{G}(v,U_{\beta}(\tau)\psi,0)\, d\tau \right|\right|_{Z(T)}\leq \tilde{A}/5.
    \end{equation}
     Using the estimate \eqref{beforehar4.10} for $v_{1}=v + U_{\beta}(\tau)\psi$ and $ v_{2}=w$ along with \eqref{v0qr2Har}, Lemma \ref{srp} \eqref{embedding}  and Proposition \ref{uf} under the assumption \eqref{D1}, we have
\begin{align*}
    \hspace{-7cm} \left\| \int_{0}^{t}  U_\beta(t-\tau)\tilde{G}(v +  U_{\beta}(\tau)\psi, w,0) \, d\tau \right\|_{Z(T)} 
\end{align*}\ignorespaces
\begin{align*}
    & \lesssim_{\nu,n,\beta} T^{1-\theta}   \left\{ \|v
    + U_{\beta}(\tau)\psi \|_{Z(T)}^{2} \|w\|_{Z(T)} +\|v
    + U_{\beta}(\tau)\psi \|_{Z(T)} \|w\|_{Z(T)}^{2}+ \|w\|_{Z(T)}^{3} \right\} \\
    & \lesssim_{\nu,n,\beta}  \|w\|_{Z(T)}\left\{2T^{1-\theta}   \left( \|\phi\|_{L^{2}} + \|\psi\|_{M^{\frac{4n}{2n-\nu}, \frac{4n}{2n+\nu}}} \right)^{2}+2T^{1-\theta}  \|w\|_{Z(T)}^{2} \right\} \\
    & \lesssim_{\nu,n,\beta}\|w\|_{Z(T)} \left\{\frac{2}{5} +2T^{1-\frac{\theta}{2}}\|\psi\|^{2}_{M^{\frac{4n}{2n-\nu},\frac{4n}{2n+\nu}}}   \right\}.
\end{align*}
In the last inequality, we have used \eqref{D2} in the first summand and substitute the square norm of $w$ in $Z(T)$ by $\tilde{A}^{2}$, ($\tilde{A}$ given in \eqref{AB}) in the second summand. \\
Consider second summand of last inequality under the assumption \eqref{D3}  to get
        \begin{equation}\label{gamma2har}
            \left\| \int_{0}^{t} U_\beta(t-\tau)\tilde{G}(v + U_{\beta}(\tau)\psi, w,0)\, d\tau \right\|_{Z(T)} \leq 4\tilde{A} /5.
        \end{equation}
        Combining \eqref{gammaw1har} and \eqref{gamma2har}, we can say that $\tilde{\Gamma}(w)$ belongs to $B(\tilde{A}, T).$ Contractivity of $\tilde{\Gamma}$ follows similarly \footnote{$C(n,\nu,\beta)$ is choosen small enough such that all requirements are fulfilled.}. Thus, by the Banach fixed-point theorem, we get a unique fixed point $w$ to the integral equation \eqref{w01har} on the time-interval $[0,T].$
\end{proof}  
 \begin{cor}\label{winfty2har}
 Under the hypothesis of Proposition \ref{w0existhar}, there exists a constant $C(n,\nu,\beta)$ satisfying
    \begin{equation*}
\|w\|_{L^{\infty}_{T}L^{2}}\lesssim_{n,\nu,\beta} T^{\frac{\theta}{4}}\|\psi\|_{M^{\frac{4n}{2n-\nu},\frac{4n}{2n+\nu}}}.
    \end{equation*}
\end{cor}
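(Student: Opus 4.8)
The plan is to mirror the argument used for Corollary \ref{winfty2} in the power-type case. Since $w$ is the fixed point furnished by Proposition \ref{w0existhar}, it satisfies the integral equation
\[
w= i\int_{0}^{t} U_\beta(t-\tau) \tilde{G}(v + U_{\beta}(\tau)\psi, w,0) \, d\tau +  i \int_{0}^{t}  U_\beta(t-\tau) \tilde{G}(v, U_{\beta}(\tau)\psi,0) \, d\tau,
\]
and I would estimate the $L^{\infty}_{T}L^{2}$ norm of each of the two Duhamel terms separately. The decisive observation is that Lemma \ref{lemlwpHar} is valid for \emph{any} admissible pair $(q_{1},r_{1})\in\mathcal{A}_{\beta}$ on its left-hand side, and the energy pair $(q_{1},r_{1})=(\infty,2)$ satisfies the admissibility relation $\beta/\infty = 0 = n(\tfrac12-\tfrac12)$. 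Consequently the estimates \eqref{beforehar4.10} and \eqref{har4.10} established in the proof of Proposition \ref{w0existhar}, whose right-hand sides are expressed purely through $Z(T)$ norms, remain valid verbatim with the left-hand norm $\|\cdot\|_{Z(T)}$ replaced by $\|\cdot\|_{L^{\infty}_{T}L^{2}}$.

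I would then simply re-run the two chains of inequalities from that proof with this substituted left-hand norm. For the second Duhamel term, applying the analogue of \eqref{har4.10} with $v_{1}=v$ and $v_{2}=U_{\beta}(\tau)\psi$, and invoking \eqref{v0qr2Har}, Lemma \ref{srp}\eqref{embedding}, Proposition \ref{uf}, and the smallness conditions \eqref{D2}–\eqref{D3}, bounds it by $T^{\theta/4}\|\psi\|_{M^{\frac{4n}{2n-\nu},\frac{4n}{2n+\nu}}}$ exactly as before. For the first Duhamel term I would apply the analogue of \eqref{beforehar4.10} with $v_{1}=v+U_{\beta}(\tau)\psi$ and $v_{2}=w$, and substitute the already-established bound $\|w\|_{Z(T)}\le\tilde{A}\lesssim T^{\theta/4}\|\psi\|_{M^{\frac{4n}{2n-\nu},\frac{4n}{2n+\nu}}}$ from \eqref{AB} together with \eqref{D2}–\eqref{D3}; this once more produces a factor $T^{\theta/4}\|\psi\|_{M^{\frac{4n}{2n-\nu},\frac{4n}{2n+\nu}}}$. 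Summing the two contributions yields the claimed estimate, so the bulk of the work is just the arithmetic bookkeeping of powers of $T$ and $\|\psi\|$ already carried out in Proposition \ref{w0existhar}.

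The one point that genuinely needs care is the admissibility of the endpoint $(\infty,2)$ when $n=2$, since the definition of $\mathcal{A}_{\beta}$ explicitly excludes $(q,r,n)=(\infty,2,2)$. In that borderline dimension I would not invoke Proposition \ref{fst}\eqref{fst2} with the left pair $(\infty,2)$ directly. Instead, I would control the homogeneous evolution by the unitarity of $U_{\beta}(t)$ on $L^{2}$, and bound the inhomogeneous term by duality against an interior admissible pair $(q_{2},r_{2})\in\mathcal{A}_{\beta}$ — the same pair already appearing on the right-hand side of Lemma \ref{lemlwpHar} — combined with a Christ–Kiselev argument, which is licit since $q_{2}>2$ forces $q_{2}'<\infty$. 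This is the only step where the $L^{\infty}_{T}L^{2}$ bound differs structurally from a verbatim transcription of Corollary \ref{winfty2}, and I expect it to be the main (mild) obstacle; all remaining steps are identical in spirit to the power-type case.
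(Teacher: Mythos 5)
Your proposal is correct and takes essentially the same route as the paper, whose proof of Corollary \ref{winfty2har} simply defers to Corollary \ref{winfty2}: apply the Strichartz estimate with the pair $(\infty,2)$ on the left-hand side of the two Duhamel terms and re-run the bounds \eqref{beforehar4.10}--\eqref{har4.10} together with \eqref{D2}, \eqref{D3} and \eqref{AB}, exactly as you describe. Your caveat about the excluded triple $(\infty,2,2)$ is in fact more careful than the paper itself, which uses $(\infty,2)$ in $n=2$ throughout (e.g.\ in Theorem \ref{GWPL2Rad} and Corollary \ref{winfty2}); the exclusion is evidently a misprint for the standard forbidden endpoint $(2,\infty,2)$, and in any case your patch is valid --- though Christ--Kiselev is not even needed, since writing $U_\beta(t-s)=U_\beta(t)U_\beta(-s)$ and using unitarity reduces the retarded $L^{\infty}_{T}L^{2}$ bound directly to the dual homogeneous estimate for the non-endpoint pair $(q_{2},r_{2})$.
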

\begin{proof}
The proof follows in a similar manner to Corollary \ref{winfty2}.
\end{proof}
\begin{proof}[\textbf{Proof of Theorem \ref{gwpHar}}]
Denote the constant from Proposition \ref{w0existhar} by $C=C(n,\nu,\beta)$ and put
\begin{equation}\label{TM}
        T=T(\tilde{N})=(3C\tilde{N}^{\tilde{\gamma}})^{-2/(1-\theta) }.
    \end{equation}
Applying Proposition \ref{w0existhar} for $\phi=\phi_{0}$ and $\psi=\psi_{0}$, we can say that the solution $u =v_0 + U_{\beta}(t) \psi_0 + w_0$ to \eqref{FNLSH} corresponding to the data $u_{0}$ in \eqref{dphar}
exists in the interval $[0, T(\tilde{N})]$.\\
We aim to extend our solution further by introducing the same iterative procedure as in the proof of Theorem \ref{gwp}, see \eqref{iter1}, but with Hartree nonlinearity. However, the nonlinear interaction term $w_{k-1}$ is given as: 
\begin{eqnarray}
    \begin{aligned}
     w_{k-1}= i \displaystyle \int_{0}^{t} U_{\beta}(t-\tau) \tilde{G}(v_{k-1}+U_{\beta}(k\tau)\psi_{0},w_{k-1},0) d\tau \nonumber \\
    + i \displaystyle \int_{0}^{t} U_{\beta}(t-\tau)\tilde{G}(v_{k-1},U_{\beta}(k\tau)\psi_{0},0) d\tau.
    \end{aligned}
       \end{eqnarray}
 We shall show that  the iterative process terminates with  $KT >T^*$ by extending the solution to the $K$th iteration.
We shall use Proposition \ref{w0existhar} with $\phi=\phi_{K}$ and $\psi=U_{\beta}(KT)\psi_{0}.$  

Taking into account \eqref{TM}, $T=T(\tilde{N})\to 0$ as $\tilde{N}\to \infty$. Thus, \eqref{D1} holds independently of $k$ for large $\tilde{N}.$ \\
Using Proposition \ref{uf} and  \eqref{asihar}, we have
\begin{align}\label{ref1har} 
\|U_{\beta}(t)\psi_{0}\|_{L^{\infty}([0,T^{*}+1],M^{\frac{4n}{2n-\nu},\frac{4n}{2n+\nu}}) } &\lesssim_{n,T^*}\|\psi_{0}\|_{M^{\frac{4n}{2n-\nu},\frac{4n}{2n+\nu}}} \lesssim_{n} 1 / \tilde{N}
\xrightarrow{\tilde{N} \to \infty} 0.
    \end{align}
Inserting $U_{\beta}(kT)\psi_{0}$ in the right hand side of \eqref{D3}, we have
\begin{eqnarray*}
\begin{aligned}
    \left( \|U_{\beta}(kT)\psi_{0}\|_{M^{\frac{4n}{2n-\nu},\frac{4n}{2n+\nu}}} \right)^{-4/(2-\theta)} \gtrsim_{n} \tilde{N}^{4/ (2-\theta)} \xrightarrow{\tilde{N} \to \infty} \infty.
   \end{aligned}
\end{eqnarray*}  
Thus, the condition \eqref{D3} holds for sufficiently large $\tilde{N}$ as the lower bound does not depend on $k$ and $T \xrightarrow{\tilde{N} \to \infty} 0.$ \\
Now, we are left with two cases. We either have $KT> T^*$ or the condition \eqref{D2} fails in the last iterative step $k=K,$ i.e.
\begin{equation}\label{aim11Har}
    3C\tilde{N}^{\tilde{\gamma}}< \|\phi_{K}\|_{L^2}+ \|U_{\beta}(KT)\psi_{0}\|_{M^{\frac{4n}{2n-\nu},\frac{4n}{2n+\nu}}}.
\end{equation} 
Considering \eqref{ref1har}, \eqref{aim11Har} can be written as
\begin{equation}\label{aimHar}
    3C\tilde{N}^{\tilde{\gamma}}< \|\phi_{K}\|_{L^2}+C\tilde{N}^{\tilde{\gamma}}.
\end{equation}
We shall prove that even under condition \eqref{aimHar}, we have $KT>T^*$ which clearly leads to a contradiction on $T^*.$ 

Using conservation of mass \eqref{mass} and Corollary \ref{winfty2har} (for  $w=w_{k}$ and $\psi=U_{\beta}(kT)\psi_{0}, \; 0\leq k \leq K-1$), we have
    \begin{align}
        \|\phi_{K}\|_{L^{2}}  &\leq \|v_{K-1}\|_{L^{\infty}_{[(K-1)T,KT]}L^{2}}+\|w_{K-1}\|_{L^{\infty}_{[(K-1)T,KT]}L^{2}} \nonumber\\
        & \lesssim_{n, \nu,\beta} \tilde{N}^{\tilde{\gamma}}+T^{\frac{\theta}{4}}\sum_{k=0}^{K-1}\|U_{\beta}(kT)\psi_{0}\|_{\tilde{N}^{\frac{4n}{2n-\nu},\frac{4n}{2n+\nu}}}\nonumber\\
        &\lesssim_{n,T^{*}} \tilde{N}^{\tilde{\gamma}}+T^{\frac{\theta}{4}}K\frac{1}{\tilde{N}}.
    \end{align}
    In the last two inequalities, we have used \eqref{asihar} and \eqref{ref1har}.
   Thus, using \eqref{TM}, \eqref{aimHar} can be expressed as
 \begin{align}
     KT &\gtrsim_{n,\nu,\beta,T^*} \tilde{N}^{1+\tilde{\gamma}}T^{1-\frac{\theta}{4}} \approx \tilde{N}^{1+\tilde{\gamma} \left(1-\frac{4-\theta}{2(1-\theta)}\right)}\nonumber\\
    &\label{Mpower}= \tilde{N}^{1 - \tilde{\gamma} \left( \frac{2 + \theta}{2(1 - \theta)} \right)}
\end{align}
By choosing the exponent of $\tilde{N}$ to be positive in \eqref{Mpower} and $\tilde{N}$ to be arbitrarily large, we get $KT>T^*$.
This concludes the proof of Theorem \ref{gwpHar}.
Note that the exponent of $\tilde{N}$ decides the range of $\tilde{\gamma}$ which in turn decides the range of $s$, see \eqref{betaq}. Thus, we have $s \in (2,s_{max})$  with $s_{max}$ as given in \eqref{qmax}. This validates the choice of \(s\)  in Theorem \ref{gwpHar}.
\end{proof}
{\bf Acknowledgments:} The authors would like to express their gratitude to professor H.G. Feichtinger   for his valuable comments and suggestions on an earlier version of this paper.
The second author acknowledges the financial support from the University Grants Commission (UGC), India (file number 201610135365). The third author would like to thankfully acknowledge the financial support from the Matrics Project of DST (file number 2018/001166).
\bibliographystyle{plain}
\bibliography{finls.bib}
\end{document}